\let\Horig\H
\newtheorem{thm}{Theorem}[section]
\newtheorem{lem}[thm]{Lemma}
\newtheorem{prop}[thm]{Proposition}
\newtheorem{cor}[thm]{Corollary}
\theoremstyle{definition}
\newtheorem{definition}[thm]{Definition}
\newtheorem*{definition-nono}{Definition}
\newtheorem{remark}[thm]{Remark}
\newtheorem*{acknowledgement}{Acknowledgements}
\newtheoremstyle{case}{}{}{}{}{}{:}{ }{}
\theoremstyle{case}
\newcommand{\N}{\mathbb{N}}
\newcommand{\Z}{\mathbb{Z}}
\newcommand{\R}{\mathbb{R}}
\renewcommand{\H}{\mathbb{H}}
\newcommand{\mc}{\mathcal}
\newcommand{\mbf}{\mathbf}
\newcommand{\mrm}{\mathrm}
\renewcommand{\a}{\alpha}
\renewcommand{\b}{\beta}
\newcommand{\g}{\gamma}
\renewcommand{\d}{\delta}
\newcommand{\e}{\varepsilon}
\renewcommand{\l}{\lambda}
\renewcommand{\L}{\Lambda}
\newcommand{\w}{\omega}
\newcommand{\s}{\sigma}
\newcommand{\vp}{\varphi}
\renewcommand{\t}{\tau}
\renewcommand{\th}{\theta}
\newcommand{\z}{\zeta}
\newcommand{\set}[1]{\left\{#1\right\}}
\renewcommand{\r}{\rightarrow}
\def\multiset#1#2{\ensuremath{\left(\kern-.3em\left(\genfrac{}{}{0pt}{}{#1}{#2}\right)\kern-.3em\right)}}
\newcommand{\norm}[1]{\left\lVert#1\right\rVert}
\newcommand{\Ccal}{\mc{C}}
\newcommand{\Dcal}{\mc{D}}
\newcommand{\supp}{\mrm{supp}}
\numberwithin{equation}{section}
\title{L\texorpdfstring{\textsuperscript{2}}{2}-Flattening of Self-similar Measures on Non-degenerate  Curves}
\author{Amir Algom}
\address{Department of Mathematics, The University of Haifa at Oranim, Tivon 36006, Israel
}
\email{amir.algom@math.haifa.ac.il
}
\author{Osama Khalil}
\address{Department of Mathematics, Statistics, and Computer Science, University of Illinois Chicago, IL.}
\email{okhalil@uic.edu}
   \def\MR#1{}
\begin{document}

\begin{abstract}
    Let $\mu$ be a non-atomic self-similar measure on $\R$, and let $\nu$ be its pushforward to a non-degenerate curve in $\R^d, d\geq 1$. We show that for every $\e>0$ there is $p>1$, so that $\norm{\hat{\nu}}_{L^p(B(R))}^p = O_\e(R^\e)$ for all $R>1$, where $B(R)$ is the $R$-ball about the origin.  As a corollary, we show that convolution with $\nu$ quantitatively improves $L^2$-dimension.
\end{abstract}

\maketitle

\section{Introduction}
\subsection{Statement of Main results}
Let $U\subseteq \mathbb{R}$ be an open interval. We call a $C^{d+1}$ curve  $Q:U\r \R^d $  \textit{non-degenerate} if
\begin{align}\label{eq:nondegenerate}
        \det [Q^{(1)}(x) Q^{(2)}(x) \cdots Q^{(d)}(x)] \neq 0 \text{ for all } x\in U.
\end{align}
 When $Q$ is real analytic, we call it \textit{non-trapped} if the determinant above has at most finitely many zeros in any compact interval; this is equivalent to the trace of $Q$  not being  contained in a proper affine hyperplane of $\mathbb{R}^d$ (Lemma \ref{lem:good intervals higher dimensions}).
 Such curves have become standard objects in harmonic analysis, see e.g. \cite{fassler2014restricted, gan2022restricted} for some discussion and applications in modern projection theory.

The purpose of this paper is to study the $L^p$-norm of the Fourier transform of  self-similar measures on $\mathbb{R}$, when they are pushed-forward by either a non-trapped or a   non-degenerate curve.

\begin{thm} \label{main theorem general}
Let $\mu \in \mathcal{P}(\mathbb{R})$ be a non-atomic  self-similar measure, and let $d\geq 2$. Let  $U$ be an open interval containing  $\supp(\mu)$, and let $Q:U\r \R^{d} $ be either a non-trapped analytic curve, or a $C^{d+1}$ non-degenerate curve. 
    Let $\nu =Q\mu$ be the pushforward of $\mu$ via $Q$.
Then,
\begin{equation}\label{eq:main thm flattening}
\forall \e>0\quad \exists p>1 \quad \forall R >0:\, \norm{ \hat{\nu} }^p_{L^p (B(R))} = O_\e(R^\e),
\end{equation}
where $B(R)$ is the $R$-ball around $0$ in $\R^d$.
\end{thm}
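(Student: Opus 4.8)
\noindent\emph{Proof strategy.} The plan is to combine the self-similar structure of $\mu$ with the curvature built into \eqref{eq:nondegenerate}, organised as a multi-scale (renewal-type) analysis. As a preliminary reduction, decompose $B(R)$ into $B(1)$ and the dyadic annuli $\{2^{k-1}\le|\xi|<2^k\}$ with $2^k\le R$; summing a geometric series, it suffices to prove the \emph{annular} bound
\[
\forall\,\e>0\ \ \exists\,p>1\ \ \forall R>1:\qquad \int_{R\le|\xi|\le 2R}|\hat\nu(\xi)|^p\,d\xi=O_\e(R^\e).
\]
Since $|\hat\nu|\le 1$, and since $\nu$ inherits a positive correlation exponent $s=s(\mu)\in(0,1]$ from $\mu$, one has $\int_{|\xi|\le R}|\hat\nu|^2\gtrsim R^{d-s}$; hence the annular bound can only hold with $p>2$, and $p=p(\e)$ will be taken large. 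Concretely it will be enough to show that outside a \emph{subpolynomially small} exceptional set $E_R\subseteq\{|\xi|\sim R\}$ one has $|\hat\nu(\xi)|\le R^{-\k}$ for a fixed $\k=\k(\mu,Q)>0$: interpolating this against the trivial bound $\int_{|\xi|\sim R}|\hat\nu|^2\lesssim R^{d-s}$ and then choosing $p$ with $\k(p-2)>d$ yields the claim. So the heart of the matter is a quantitative Fourier-decay statement for $\nu$, valid off a negligible set of frequencies.

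For the main estimate, use self-similarity to renormalise. Writing the generating IFS as $\{f_i(x)=r_ix+t_i\}$ with probabilities $\{p_i\}$ and $\mu=\sum_i p_i(f_i)_*\mu$, fix a scale $\rho=\rho(R)\in(0,1)$ (to be optimised) and the corresponding stopping set $\Lambda$ of words $\mathbf i$ with $r_{\mathbf i}\approx\rho$, so that
\[
\hat\nu(\xi)=\sum_{\mathbf i\in\Lambda}p_{\mathbf i}\,e^{-2\pi i\langle\xi,Q(t_{\mathbf i})\rangle}\,\widehat{(Q_{\mathbf i})_*\mu}(\xi),\qquad Q_{\mathbf i}:=Q\circ f_{\mathbf i}-Q(t_{\mathbf i}),
\]
exhibiting $\nu$ as a convex combination of pushforwards of $\mu$ under affinely reparametrised, $\rho$-rescaled copies of $Q$. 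On $\supp\mu$ the phase $x\mapsto\langle\xi,Q_{\mathbf i}(x)\rangle$ is a $C^{d+1}$-small perturbation of the degree-$d$ polynomial whose $k$-th coefficient is $\tfrac1{k!}r_{\mathbf i}^{\,k}\langle\xi,Q^{(k)}(t_{\mathbf i})\rangle$, and by \eqref{eq:nondegenerate} the vector $\bigl(\langle\xi,Q^{(k)}(t_{\mathbf i})\rangle\bigr)_{k=1}^d$ is an invertible linear image of $\xi$. Thus, for $|\xi|\sim R$ and a suitable $\rho$ (chosen so that the scales $R\rho^{\,k}$ straddle $1$), the phase genuinely oscillates unless $\xi$ lies in a thin neighbourhood of a lower-dimensional ``resonant cone'' $\Sigma_{\mathbf i}\subset\R^d$ cut out by the osculating flag of $Q$ at $t_{\mathbf i}$. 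Off $\Sigma_{\mathbf i}$, a van der Corput / sublevel-set estimate — exploiting the Frostman regularity and strict positivity of the dimension of a non-atomic self-similar measure — yields power decay of $\widehat{(Q_{\mathbf i})_*\mu}(\xi)$; on $\Sigma_{\mathbf i}$ one recurses, relating $\widehat{(Q_{\mathbf i})_*\mu}(\xi)$ to $\hat\nu$ (or $\hat\mu$) at a finer scale, where the curvature of $Q$ at that scale, or the general position of the cones $\Sigma_{\mathbf i}$ across different $\mathbf i$, can be brought to bear. Summing over $\mathbf i\in\Lambda$ by the triangle inequality, and using that for each fixed $\xi$ only a small proportion of the cylinders are resonant — this is exactly where \eqref{eq:nondegenerate} enters; in the analytic non-trapped case one first passes, via Lemma~\ref{lem:good intervals higher dimensions} and the non-atomicity of $\mu$, to subintervals on which $Q$ is non-degenerate and absorbs the finitely many degenerate points — one concludes that $|\hat\nu|$ is large only on a small set, which is the exceptional-set bound above.

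The scheme is then closed by a bootstrap over scales: the gain obtained in passing from scale $R$ to scale $\rho(R)$ is fed back into the same estimate at the smaller scale, and the single-step gain is promoted to the desired uniform bound, the final exponent $p$ and implied constant being tracked through the recursion (equivalently, one may phrase this as a renewal-theoretic summation over generations). I expect the main obstacle to be precisely the resonant regime. The measure $\mu$ need not have \emph{any} Fourier decay on $\R$ — the middle-thirds Cantor measure being the standard example — so all cancellation must ultimately be extracted from the curvature of $Q$; yet $\mu$ is a singular fractal, to which classical oscillatory-integral bounds do not apply directly. Bridging this gap requires feeding the self-similar structure of $\mu$ into a discretised sum-product / nonlinear-projection input — in the spirit of the projection theorems of Bourgain and of Hochman--Shmerkin — so as to convert ``$\widehat{(Q_{\mathbf i})_*\mu}$ fails to decay at scale $\rho$'' into genuine additive structure of $\mu$ at that scale, which is then contradicted, using the non-degeneracy of the osculating flags, by the analogous statement at a different scale. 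Rendering the resonant sets, the decay exponent $\k$, and the bootstrap all quantitative and uniform over \emph{every} non-atomic self-similar $\mu$ and every admissible curve $Q$ is the delicate technical core.
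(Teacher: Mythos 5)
Your high-level reduction is sound: the desired $L^p$-flattening is equivalent to a Tsujii-type large-deviations estimate (``$|\hat\nu|\le R^{-\kappa}$ off a subpolynomially small exceptional set''), and the interpolation against the trivial $L^2$ bound coming from the Frostman exponent of $\nu$ is correct. You also correctly identify the crux — $\mu$ itself need have no Fourier decay, so singular-measure van der Corput is not available. But the mechanism you propose to overcome this is where the gap lies, and it is a real one. The statement that ``off $\Sigma_{\mathbf i}$, a van der Corput / sublevel-set estimate --- exploiting the Frostman regularity --- yields power decay of $\widehat{(Q_{\mathbf i})_*\mu}(\xi)$'' is false in general for singular self-similar $\mu$: Frostman regularity controls sublevel sets of the phase, not the oscillatory integral against $\mu$ (the middle-thirds Cantor measure pushed to the parabola is already a counterexample to the naive version). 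Your proposed bridge (``feeding the self-similar structure into a discretised sum-product / Hochman--Shmerkin inverse-theorem input'') is a different strategy, closer to the line of work in \cite{khalil2023exponential, shmerkin2025inverse}; the introduction of the paper explains that precisely this approach fails here because the local non-concentration hypothesis \eqref{eq:anc} breaks down near tangent lines of the curve, which is why the paper deliberately avoids inverse-entropy machinery.

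The paper's actual argument takes a different route, and the ingredients you are missing are concrete. First, the frequency decomposition is not into dyadic annuli with resonant/non-resonant \emph{cylinders}, but into the conical regions $C_{R,\e}=\{\,\|\zeta\|\geq|\th|^\e\,\}$ and $E_{R,\e}=\{\,\|\zeta\|\leq|\th|^\e\,\}$: the split is by which frequencies can see curvature at all, not by which cylinders are resonant for a fixed $\xi$. In $E_{R,\e}$ there is no curvature to exploit, and one does not try to; the estimate reduces, via $|\widehat{\nu_\w}(\xi)|\le|\hat\mu(\l_\w\th)|+O(\tau\|\zeta\|)$, to a direct application of Tsujii's large-deviations estimate for the one-dimensional $\hat\mu$ (Proposition~\ref{prop: Case 2}, Lemma~\ref{lem:annular partition of Case 2}, Lemma~\ref{Lemma 3 rewrite}). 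In $C_{R,\e}$ the curvature is exploited, but not by oscillatory-integral methods: the paper Taylor-expands to reduce to the moment curve $V_d$ and then invokes the Feng--K{\"a}enm{\"a}ki observation (Lemma~\ref{Lemma self affine}) that $V_d\mu$ is a genuinely \emph{self-affine} measure. This converts $\hat\nu(\xi)$ into a convex combination of values $\hat\mu$ at frequencies obtained by projecting the discretised copy of $V_{d-1}\mu$ along a direction determined by $\zeta$. The uniform Frostman bound needed on these projections is supplied by the inductive hypothesis that $V_{d-1}\mu$ itself satisfies the flattening estimate (via Theorem~\ref{prop:flattening implies friendly}), and then Tsujii again closes the argument at the lower scale. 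This dimension-descent induction on $d$ (base case $d=2$ from Corollary~\ref{Coro Tsujii}), together with the self-affine trick, is the substitute for the oscillatory/bootstrap scheme you sketch; without them there is no way to extract cancellation from the singular factor $\mu$, which is exactly the difficulty you flagged but did not resolve.
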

Recall that a self-similar measure $\mu$ on the line is a Borel probability measure satisfying the stationarity condition, for some strictly positive probability vector $\mathbf{p}$,
$$\mu = \sum_{i=1} ^n \mbf{p}_i \cdot f_i \mu,\, \text{ where all } f_i\in \text{Aff}(\mathbb{R)} \text{ are s.t. } |f_i'|\in (0,1), \text{ and }  f_i\mu \text{ is the push forward of } \mu \text{ by } f_i.$$
See Section \ref{Section self-similar prel.} for  more discussion about them. Also, the notation $O_\e$ means the implicit constant may depend on $\e$; it may also depend on $\mu,d,Q$ and the maps $f_i$. Next, we note that the non-degeneracy condition \eqref{eq:nondegenerate}, and the non-trapped condition, cannot be relaxed from the Theorem. Indeed, \eqref{eq:main thm flattening} cannot hold for any measure $\nu$ living on a proper affine subspace, since then $|\hat{\nu}(\xi)|\sim 1 $ for all frequencies $\xi$ that are nearly orthogonal to the subspace supporting $\nu$. In particular, such frequencies contribute non-trivially to the $L^p$-norm of $\hat{\nu}$.

Finally, let us explain the relation between the title of the paper and our main result.
For $m\in \N$, let $\Dcal_m$ be the dyadic partition of $\R^d$ given by translates of $2^{-m}[0,1)^d$ by $2^{-m}\Z^d$. Given $q\geq 1$, we define the moment sum corresponding to $m,q,$ and a Borel probability measure $\nu$, by
\begin{align*}
    s_m(\nu,q) \stackrel{\mrm{def}}{=} \sum_{Q\in \Dcal_m} \nu(Q)^q. 
\end{align*}
For $q=\infty$, we set
\begin{align*}
    s_m(\nu,\infty) \stackrel{\mrm{def}}{=} \max \set{\nu(Q): Q\in \Dcal_m}.
\end{align*}
A standard computation (Lemma \ref{lem:Fourier vs moment sum}) shows that if $\nu$ is compactly supported, then for every $R>1$, letting $m = [\log_2 R]\in \N$ be the integer part of $\log_2 R$, we have
    \begin{align*}
         s_m(\nu,2)  
        \asymp 2^{-dm} \norm{\hat{\nu}}^2_{L^2(B(R))}.
    \end{align*}
    Thus, the conclusion of Theorem \ref{main theorem general} is equivalent to the following flattening phenomenon of moment sums:
for every $\e>0$ there exists $p\in\N$ such that for all $m$ large enough,
\begin{equation} \label{eq: flattening}
s_m(\nu^{\ast p},2) =  O_\e\left( 2^{m(\e-d)} \right).    
\end{equation}
Here $\mu\ast\nu$ means the usual Euclidean convolution of measures (pushforward of $\mu\times \nu$ by $(x,y)\mapsto x+y$), and $\nu^{\ast p}$ means the $p$-fold self-convolution of $\nu$.

Theorem~\ref{main theorem general} has the following consequences. Recall that   for a Borel probability  $\nu$ on $\mathbb{R}^d$ and $1<q<\infty$, one defines its $L^q$-dimension via
$$\dim_q (\nu)=\frac{\tau_q(\nu)}{q-1}, \quad \text{ where } \tau_q (\nu) = \liminf_{m\rightarrow \infty} \frac{-\log s_m(\nu,q)}{m}.$$
For $q=\infty,$ we set $\dim_\infty(\nu) = \t_\infty(\nu)$. The $L^\infty$-dimension is also known as the \textit{Frostman exponent} of $\nu$.
With this notation, we have the following corollary of Theorem~\ref{main theorem general}.
\begin{cor} \label{main corollary}
    Let $\nu$ be as in Theorem~\ref{main theorem general}. Then,
    \begin{enumerate}
        \item For all $q\in [2,\infty]$ we have $\lim_{n\rightarrow \infty} \dim_q(\nu^{\ast n})= d$.

        \item For every $\g>0$, there is $\eta=\eta(\g)>0$ and $m_0 = m_0(\g,\eta)>1$ such that the following holds for all integers $m\geq m_0$:
        For every Borel probability measure $\th$, 
        \begin{align}\label{eq:L2 improving}
      s_m(\th,2) > 2^{m(\g-d)} \quad \Longrightarrow  \quad s_m(\theta * \nu) 
        \leq 2^{-\eta m} s_m(\th,2).
        \end{align}
    \end{enumerate}
    Indeed, both statements hold true for any measure $\nu$ satisfying~\eqref{eq:main thm flattening}.
\end{cor}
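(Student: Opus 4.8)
The plan is to deduce Corollary~\ref{main corollary} purely formally from the hypothesis~\eqref{eq:main thm flattening} (equivalently, from the moment-sum flattening~\eqref{eq: flattening}), using only the elementary relation between Fourier $L^2$-norms and $\ell^2$ moment sums recorded in Lemma~\ref{lem:Fourier vs moment sum}, together with standard monotonicity properties of the moment sums $s_m(\cdot,q)$ under convolution and in $q$. So the first order of business is to collect these soft facts: (i) $s_m(\nu^{\ast p},2) \asymp 2^{-dm}\norm{\hat\nu}^{2p}_{L^{2p}(B(R))}$ is \emph{not} quite right, so instead I would use directly that $\widehat{\nu^{\ast p}} = \hat\nu^p$, whence $\norm{\widehat{\nu^{\ast p}}}^2_{L^2(B(R))} = \norm{\hat\nu}^{2p}_{L^{2p}(B(R))} = \norm{\hat\nu}^p_{L^p(B(R))}$ only when $2p$ is taken to match the exponent in~\eqref{eq:main thm flattening}; more cleanly, given $\e$, pick $p$ from Theorem~\ref{main theorem general} and observe $\norm{\hat\nu}^p_{L^p(B(R))} = O_\e(R^\e)$ gives, after combining with Lemma~\ref{lem:Fourier vs moment sum} applied to $\nu^{\ast p/2}$ when $p$ is even (and otherwise replacing $p$ by $2\lceil p/2\rceil$, which only helps), the bound $s_m(\nu^{\ast p},2) = O_\e(2^{m(\e - d)})$; (ii) $s_m(\theta\ast\nu,2)\leq s_m(\theta,2)$ always, since convolution with a probability measure is an $\ell^\infty\to\ell^\infty$ and (by interpolation or directly by Jensen) $\ell^2\to\ell^2$ contraction on the discretized masses, up to a bounded multiplicative constant from the mismatch between a cube and its neighbours; (iii) $s_{m}(\nu^{\ast(n+1)},2)\leq C\, s_m(\nu^{\ast n},2)$ and more importantly $s_m(\nu^{\ast kn},2)\le C^{k}$ type submultiplicativity/monotonicity so that passing from a fixed $p$ to all large $n$ is automatic.

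For part (1), the plan is: fix $q\in[2,\infty]$ and $\e>0$. Since $\dim_q$ is non-decreasing in $q$ only in the wrong direction, I instead bound $\dim_q$ from below by $\dim_2$-type quantities using the standard inequality $s_m(\theta,q)\leq s_m(\theta,2)^{?}$—more precisely, for $q\geq 2$ one has $s_m(\theta,q)\le s_m(\theta,2)$ (since each term $\theta(Q)^q\le \theta(Q)^2$ as $\theta(Q)\le 1$), hence $\tau_q(\theta)\ge \tau_2(\theta)$ and $\dim_q(\theta)=\tau_q(\theta)/(q-1)$; but I want a lower bound on $\dim_q(\nu^{\ast n})$ approaching $d$, so the efficient route is the Frostman ($q=\infty$) endpoint: from $s_m(\nu^{\ast p},2)=O_\e(2^{m(\e-d)})$ and $s_m(\theta,\infty)^2 \le s_m(\theta,2)$ we get $s_m(\nu^{\ast p},\infty) = O_\e(2^{m(\e-d)/2})$, which only gives Frostman exponent $\ge (d-\e)/2$, not $d-\e$. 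To reach the full value I convolve more: applying the estimate to $\nu^{\ast p}$ in place of $\nu$ is not available, but $s_m((\nu^{\ast p})^{\ast k},2) = s_m(\nu^{\ast pk},2)$ and a second application of Theorem~\ref{main theorem general}—now with a smaller $\e'$—is. The clean statement is: for every $\e>0$ there is $p$ with $\tau_2(\nu^{\ast p})\ge d-\e$; since $\tau_2$ is non-decreasing along the convolution semigroup (by (ii)/(iii) above applied with $\theta$ a convolution power of $\nu$), $\tau_2(\nu^{\ast n})\ge d-\e$ for all $n\ge p$, and as $\tau_2\le d$ trivially, $\lim_n \tau_2(\nu^{\ast n})=d$; then for $2\le q<\infty$, interpolating $s_m(\theta,q)$ between $s_m(\theta,2)$ and $s_m(\theta,\infty)$ (log-convexity of $r\mapsto \log s_m(\theta,r)$) and using $s_m(\theta,\infty)\le s_m(\theta,2)^{1/2}$ yields $\tau_q(\theta)\ge \frac{q-1}{q}\cdot\frac{}{}$—one checks the exponents so that $\tau_q(\nu^{\ast n})\to (q-1)d$, i.e. $\dim_q(\nu^{\ast n})\to d$; the $q=\infty$ case follows by a monotone-limit argument since $\dim_q$ decreases to $\dim_\infty$ and each is $\ge d-o(1)$.

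For part (2), the plan is a direct pigeonhole/convolution argument. Fix $\g>0$. Choose $\e = \g/2$ and let $p=p(\e)$ be given by Theorem~\ref{main theorem general}, so $s_m(\nu^{\ast p},2)\le C_\e 2^{m(\e-d)}$ for $m\ge m_0$. Now suppose $\theta$ satisfies $s_m(\theta,2) > 2^{m(\g-d)}$. The key inequality is a bilinear bound of the form $s_m(\theta\ast\sigma,2)\leq C\, s_m(\theta,2)^{1/2}\, s_m(\sigma,2)^{1/2}$? — no, the correct and standard estimate is $s_m(\theta\ast\sigma,2) \le C\, s_m(\theta,\infty)\, s_m(\sigma,2)$ and symmetrically, which comes from writing $(\theta\ast\sigma)(Q)\le \sup_x \sigma(Q-x)\cdot$(total mass) type majorization on each dyadic cube; iterating, $s_m(\theta\ast\nu^{\ast p},2)\le C^p s_m(\theta,2)\prod$ of Frostman-type factors of $\nu$—but $\nu$ need not have positive Frostman exponent. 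The robust route instead: $s_m(\theta\ast\nu^{\ast p},2)\le C\, s_m(\theta,2)^{1/2}\,\big(2^{dm} s_m(\theta,2)\, s_m(\nu^{\ast p},2)\big)^{1/2}$ via Cauchy–Schwarz after expanding the convolution as an average, giving $s_m(\theta\ast\nu^{\ast p},2)\le C\, s_m(\theta,2)\,\big(2^{dm}s_m(\nu^{\ast p},2)\big)^{1/2}\le C'\, s_m(\theta,2)\, 2^{m\e/2}$. Since $s_m(\theta,2)>2^{m(\g-d)}$ is used only to know $s_m(\theta,2)$ is not already tiny, and more to the point $2^{m\e/2} = 2^{m\g/4}$ is an \emph{increase}, this naive bound is the wrong direction — the improvement must come from replacing one factor of $s_m(\theta,2)$ by $2^{m(\g-d)}$ being a \emph{lower} bound that is \emph{smaller} than the actual decay we can force. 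The correct mechanism, which I expect to be the main obstacle to state cleanly, is: convolving with $\nu^{\ast p}$ cannot increase $s_m(\cdot,2)$ (step (ii)), so $s_m(\theta\ast\nu^{\ast p},2)\le s_m(\theta,2)$ unconditionally; separately, $s_m(\theta\ast\nu^{\ast p},2)\le C 2^{dm} s_m(\theta,2)\, s_m(\nu^{\ast p},2)\le C C_\e 2^{dm}s_m(\theta,2)2^{m(\e-d)} = CC_\e\, 2^{m\e}\, s_m(\theta,2)$, still the wrong direction; the genuine gain requires the hypothesis on $\theta$: one shows $s_m(\theta\ast\nu^{\ast p},2)\le C\, s_{m}(\nu^{\ast p},2)^{1/2}\, s_m(\theta,2)^{1/2}\,2^{dm/2}$ and then invokes $s_m(\theta,2)\le 2^{0}=1$ only as a ceiling while $s_m(\nu^{\ast p},2)\le C_\e 2^{m(\e-d)}$ is the true decay, yielding $s_m(\theta\ast\nu^{\ast p},2)\le C\, 2^{m\e/2}\, s_m(\theta,2)^{1/2}$; combined with the hypothesis $s_m(\theta,2)>2^{m(\g-d)}$ rewritten as $s_m(\theta,2)^{1/2} < s_m(\theta,2)\cdot 2^{-m(\g-d)/2}$, we get $s_m(\theta\ast\nu^{\ast p},2)\le C\,2^{m\e/2}2^{-m(\g-d)/2}s_m(\theta,2) = C\,2^{md/2}2^{m(\e-\g)/2}s_m(\theta,2)$; choosing $\e<\g$ and absorbing, set $\eta = (\g-\e)/2 - d/2 \cdot(\text{needs rescaling})$, so after renaming constants and possibly replacing $m$-range, $s_m(\theta\ast\nu^{\ast p},2)\le 2^{-\eta m}s_m(\theta,2)$ for a definite $\eta=\eta(\g)>0$ and all $m\ge m_0(\g)$; finally set the $\eta$ and $m_0$ of the statement and note $\nu^{\ast p}$ is a legitimate witness, or more simply apply the argument with $\nu$ replaced by $\nu^{\ast p}$ throughout — since the corollary only asserts existence of \emph{some} convolution power behaving well, and the displayed implication~\eqref{eq:L2 improving} is stated for the original $\nu$, I would instead run the whole argument with the single measure $\nu$ after first noting that~\eqref{eq:main thm flattening} for $\nu$ already gives the decay $s_m(\nu^{\ast p},2)=O(2^{m(\e-d)})$ and that~\eqref{eq:L2 improving} as written implicitly allows $m_0$ to depend on the fixed $p$ hidden in the constants. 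The main obstacle, then, is purely bookkeeping: getting the Cauchy–Schwarz convolution inequality with the right powers of $2^{dm}$ so that the Frostman-free decay of $\nu^{\ast p}$ beats the crude $2^{dm}$ loss, and tracking that the hypothesis $s_m(\theta,2)>2^{m(\g-d)}$ converts a $s_m(\theta,2)^{1/2}$ into a genuine multiplicative saving; I do not expect any analytic difficulty beyond Theorem~\ref{main theorem general} itself.
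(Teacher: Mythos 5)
Your approach for $q=2$ in Part (1) is sound and matches the paper's: given $\e>0$ pick $p$ from Theorem~\ref{main theorem general}, replace $p$ by a larger even integer (harmless since $|\hat\nu|\le 1$), and use $\widehat{\nu^{\ast k}}=\hat\nu^k$ together with Lemma~\ref{lem:Fourier vs moment sum} to get $s_m(\nu^{\ast k},2)=O_\e(2^{m(\e-d)})$; the monotonicity of $s_m(\cdot,2)$ under convolution with a probability measure (which follows most cleanly from $|\widehat{\theta\ast\sigma}|\le|\hat\theta|$ and Lemma~\ref{lem:Fourier vs moment sum} again) then gives $\tau_2(\nu^{\ast n})\to d$. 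However, your passage to $q=\infty$ has a genuine gap. You correctly note that $s_m(\theta,\infty)^2\le s_m(\theta,2)$ only yields $\tau_\infty\ge\tau_2/2$, i.e.\ Frostman exponent $\ge(d-\e)/2$, and your proposed fix --- interpolating between $\ell^2$ and $\ell^\infty$ and ``taking a monotone limit in $q$'' --- is circular: one cannot upgrade $\tau_2$-information to $\tau_\infty$-information via interpolation alone, because $\ell^\infty$ is the missing endpoint. The missing ingredient (and what the paper invokes via \cite[Lemma 5.2]{MosqueraShmerkin}, ``Young's inequality'') is the bilinear estimate $s_m(\theta\ast\sigma,\infty)\ll_d s_m(\theta,2)^{1/2}s_m(\sigma,2)^{1/2}$, obtained by writing $(\theta\ast\sigma)(Q)$ as a sum over the dyadic grid and applying Cauchy--Schwarz. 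Applied with $\theta=\sigma=\nu^{\ast p}$ this gives $s_m(\nu^{\ast 2p},\infty)\ll 2^{m(\e-d)}$, hence $\tau_\infty(\nu^{\ast 2p})\ge d-\e$, and then $s_m(\theta,q)\le s_m(\theta,\infty)^{q-2}s_m(\theta,2)$ yields $\tau_q\ge(q-1)(d-\e)$ for all $q\in[2,\infty]$.

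Your argument for Part (2) does not close, and you candidly flag this yourself (the final $\eta$ you exhibit is $(\gamma-\e)/2-d/2$ modulo ``rescaling,'' which is negative). The auxiliary inequalities you float are either false as stated (Young's inequality for $\ell^2$ pairs $\ell^1$, not $\ell^\infty$, so $s_m(\theta\ast\sigma,2)\ll s_m(\theta,\infty)s_m(\sigma,2)$ is not correct) or produce a $2^{dm/2}$ loss you cannot absorb. The mechanism the paper actually relies on (following \cite[Theorem 4.1]{MosqueraShmerkin}) is the \emph{large-deviation reformulation} of~\eqref{eq:main thm flattening}: as noted around~\eqref{eq: Tsujii into}, the $L^p$-flattening is equivalent to the statement that for every $\e>0$ there is $\d>0$ with $\bigl|\{\xi\in B(2^m):|\hat\nu(\xi)|>2^{-\d m}\}\bigr|\ll 2^{\e m}$. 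One then writes, via Lemma~\ref{lem:Fourier vs moment sum},
\[
s_m(\theta\ast\nu,2)\ll 2^{-dm}\int_{B(2^m)}|\hat\theta|^2|\hat\nu|^2\,d\xi,
\]
splits the integral into the set where $|\hat\nu|\le 2^{-\d m}$ (contributing $\le 2^{-2\d m}\norm{\hat\theta}^2_{L^2(B(2^m))}\ll 2^{-2\d m}2^{dm}s_m(\theta,2)$) and its complement of measure $\le 2^{\e m}$ (contributing $\le 2^{\e m}$, which is $\le 2^{(\e-\g)m}\cdot 2^{dm}s_m(\theta,2)$ precisely because of the hypothesis $s_m(\theta,2)>2^{m(\g-d)}$). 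Choosing $\e<\g$ and $\eta=\min(2\d,\g-\e)$ gives~\eqref{eq:L2 improving}. Your approach never uses the large-deviation content of the hypothesis on $\nu$, only the $L^2$ moment-sum bound, and this is exactly why the exponents do not work out: the hypothesis on $\theta$ is needed to absorb the exceptional frequencies, not to feed a Cauchy--Schwarz.
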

\begin{proof}
Part (1) follows directly for $q=2$ from Theorem \ref{main theorem general}, in its equivalent form \eqref{eq: flattening}. 
Hence, the case $q=\infty$ then follows by Young's inequality; cf.~\cite[Lemma 5.2]{MosqueraShmerkin}, which then yields the claim for all other values of $q$.
Part (2) follows from Theorem \ref{main theorem general} similarly to the proof of~\cite[Theorem 4.1]{MosqueraShmerkin}.    
\end{proof}

We proceed now to discuss some prior results in harmonic analysis, fractal geometry, and dynamical systems, and how Theorem \ref{main theorem general} and its proof compare to them. 

\subsection{Prior results} We begin by comparing Theorem \ref{main theorem general} to recent results in harmonic analysis. Recall that a probability measure $\nu$ on $\mathbb{R}^d$ is called an $s$-Frostman measure, where $s\in [0,d]$, if $\mu(B(x,r))\leq r^s$ for all $x\in \mathbb{R}^d$ and $r>0$. In \cite{Orponen2023para}, Orponen proved that if $\nu$ is an $s$-Frostman measure on the truncated parabola $\mathbb{P}:=\lbrace (x,x^2):\,[-1,1]\rbrace$, then $\norm{\hat{\nu}}_{L^4 (B(R))} ^4 \ll R^{2-2s}$ for all $R\geq 1$. For $s\in (0,1)$ and $p>4$  he was able to obtain an $\e=\e(p,s)$ improvement $\norm{\hat{\nu}}_{L^p (B(R))} ^p \ll R^{2-2s-\e}$. Orponen also related this problem to the sum-product phenomenon and the Borel subring problem \cite[Section 1.2]{Orponen2023para}. He conjectured that for every $s\in [0,1]$ and $\e>0$ there exists some $p=p(\e,s)\geq 1$ such that for every $s$-Frostman measure on $\mathbb{P}$, $\norm{\hat{\nu}}_{L^p (B(R))} ^p \ll R^{2-\min \lbrace 3s, 1+s \rbrace+p\e }$, see \cite[Conjecture 1.6]{Orponen2023para}. Furthermore, exploiting the relation with iterated sum-sets, it was shown in \cite[Example 1.8]{Orponen2023para} that the threshold $\min \lbrace 3s, 1+s \rbrace$ cannot be further improved in this generality.  Dasu and Demeter \cite{Dasu2024demeter} later extended these results for $s$-Frostman measures $\nu$  that are supported on the graph of a $C^3( [-1,1],\, \mathbb{R)}$ function $\gamma$ such that $\min_{x\in [-1,1]} |\gamma''(x)|>0$; they established an Orponen-like bound of the form $\norm{\hat{\nu}}_{L^6 (B(R))} ^p \ll R^{2-2s-\beta}$ where $\beta=\beta(s), s\in (0,1)$. The conjectured bound from \cite[Conjecture 1.6]{Orponen2023para} was then established by Orponen, Puliatti, and Py{\"o}r{\"a}l{\"a} \cite{orponen2024fourier2}, along with further applications and analogies with the dimensions of iterated sum-sets on $\mathbb{P}$; some problems in this direction, however,   still remain open \cite[Questions 1 and 2]{orponen2024fourier2}. Finally, for measures $\nu$ supported on graphs of functions as in the work of Dasu and Demeter \cite{Dasu2024demeter}, it was very recently shown by Demeter and Wang \cite{Demeter2025Wang} that if $\nu$ is $s$-Frostman where $s\in (0,\frac{1}{2})$ then $\norm{\hat{\nu}}_{L^6 (B(R))} ^6 \ll R^{2-2s-\frac{s}{4}+\e}$ for all $\e>0$. See also \cite{yi2024bounded} for further research in this direction.

To compare Theorem \ref{main theorem general} to these results, we first note that all non-atomic self-similar measures are $s$-Frostman; this was first proved by Feng and Lau \cite{Feng2009Lau} (Proposition \ref{prop:mu is Frostman}). The precise computation of the best possible $s$ given the generating IFS is a subtle problem, that is still open in general; see Shmerkin's work \cite{shmerkin2016furstenberg} for some recent progress on it. Nonetheless, Theorem \ref{main theorem general} dramatically improves upon the results in \cite{Orponen2023para, Dasu2024demeter, orponen2024fourier2, Demeter2025Wang} as it shows optimal flattening regardless of the precise value of $s$,  for arbitrary non-trapped or non-degenerate curves in every dimension. In particular, we show that for self-similar measures the general threshold \cite[Example 1.8]{Orponen2023para} can be  substantially improved. However, unlike  \cite{Orponen2023para, orponen2024fourier2} where the key step involves a reduction to a problem about Furstenberg sets, or \cite{Dasu2024demeter, Demeter2025Wang} that use various decoupling arguments, the stationary structure of self-similar measures allows for a wider array of tools. It is thus interesting to ask how much regularity the measure  should enjoy so that it can break the   general threshold given in \cite[Example 1.8]{Orponen2023para}. In the same spirit, one may ask whether Theorem \ref{main theorem general} holds true for non-atomic Ahlfors-David regular measures (note, though, that not all self-similar measures have this property).

Let us now place Theorem \ref{main theorem general} within recent literature on fractal geometry and dynamical systems. It can be considered a variant of the Fourier decay problem, which asks about optimal \textit{pointwise} estimates for the decay rate of $\hat{\nu}(\xi)$ when $\nu$ is a stationary measure. For self-similar measures, it is of central importance to understand when have polynomial decay ($\hat{\nu}(\xi)=O(\norm{\xi}^{-\alpha})$ for some $\alpha>0$) due to e.g. the relation between this property and the absolute continuity of $\nu$ \cite{Shmkerin2014Abs}. Solomyak \cite{solomyak2019fourier}, extending the classical Erd\Horig{o}s-Kahane argument,  had shown that polynomial decay is generic among self-similar measures, and a full characterization of the Rajchman property for them had been given by Li-Sahlsten \cite{li2018fourier}, Br\'{e}mont \cite{bremont2019rajchman} (see also Varj{\'u}-Yu \cite{varju2020fourier}), and Rapaport \cite{rapaport2021rajchman}. However, the only known \textit{explicit} examples of self-similar measures with polynomial  decay are due to Dai, Feng, and Wang \cite{Dai2007Feng}, a work that was recently extended by Streck \cite{streck2023absolute}. We remark that there are many explicit examples of measures with logarithmic decay ($\hat{\nu}(\xi)=O((\log |\xi|)^{-\alpha})$ for some $\alpha>0$) under various Diophantine conditions, see e.g. \cite{li2018fourier, varju2020fourier,  algom2021decay, khalil2024polynomial}.

Another closely related direction concerns pointwise decay rates for non-linear push-forwards of self-similar measures. It was first observed by Kaufman \cite{Kaufman1984ber} that, among other things, if $g$ is any $C^2$ diffeomorphism on $[0,1]$ such that $g''>0$ then the pushforward of the Cantor-Lebesgue measure on the middle thirds Cantor set does have polynomial Fourier decay (even though the original measure is not even Rajchman). This was extended to all uniformly contracting self-similar measures by Mosquera and Shmerkin \cite{MosqueraShmerkin}, and then to all non-atomic self-similar measures by Algom, Chang, Meng Wu, and Yu-Liang Wu \cite{Algom2023Wu}, and simultaneously and independently by Baker and Banaji \cite{baker2024polynomial}. We note that, combined with the earlier works of Algom, Rodriguez Hertz, and Wang \cite{algom2023polynomial} and Baker and Sahlsten \cite{Baker2023Sahl}, this shows that all self-conformal  measures with respect to non-affine real analytic IFSs have polynomial Fourier decay. These results were extended to higher dimensions in various directions by Algom, Rodriguez Hertz, and Wang \cite{algom2024plane}, Baker, Khalil, and Sahlsten \cite{khalil2024polynomial}, and Banaji and Yu \cite{BanajiYu}.

The proof of Theorem \ref{main theorem general} is related to the approach of \cite{Algom2023Wu,khalil2024polynomial}, where
the main tool exploited is the following large deviations estimate for the Fourier transform. Let $\mu$ be a non-atomic self-similar measure on $\mathbb{R}$. Then,
\begin{equation} \label{eq: Tsujii into}
\forall \e>0\, \exists \delta>0\text{ s.t. } \forall T\gg 1, \text{ we can cover } \lbrace|\xi|\leq T:\, |\hat{\mu}(\xi)|\geq T^{-\delta}\rbrace \text{ by } O_\e(T^\e) \text{ intervals of size } 1.    
\end{equation}
This was first observed by Kaufman \cite{Kaufman1984ber} for some Bernoulli convolutions. Tsujii \cite{Tsujii2015self} proved \eqref{eq: Tsujii into} for all non-atomic self-similar measures. Mosquera and Shmerkin \cite{MosqueraShmerkin} proved effective (quantitative) versions of \eqref{eq: Tsujii into}; these in turn allow one to give explicit lower bounds on the Fourier dimension of non-linear images of self-similar measures; see also \cite{BanajiYu} for related results.
Note that large deviation estimates of the form \eqref{eq: Tsujii into} are equivalent to the $L^p$-norm bounds in \eqref{eq:main thm flattening}.

A general criterion implying that \eqref{eq: Tsujii into} holds for arbitrary Borel probability measures (not necessarily self-similar) was established in \cite{khalil2023exponential}.
Namely, it is shown that $\nu$ satisfies \eqref{eq:main thm flattening} if for all $\e>0$, there is $\d>0$ such that for all proper affine subspaces $W$, we have
\begin{align}\label{eq:anc}
    \nu(W(\d r)\cap B(x,r)) \leq \e \, \nu\left(B(x,O(r))\right),
\end{align}
for all $x\in \supp(\nu)$ outside an exponentially small exceptional set, and all but a small proportion of scales $r>0$; cf. \cite[Cor. 1.7 and 6.4]{khalil2023exponential} for precise statements.
Here, $W(\d r)$ and $B(x,r)$ denote the $\d r$-neighborhood of $W$ and the $r$-ball around $x$ respectively.

The proof this result involves producing conditions on measures that are \textit{not} $L^2$-improving in the sense of \eqref{eq:L2 improving}, generalizing Shmerkin's $1$-dimensional inverse theorem \cite{shmerkin2016furstenberg} to higher dimensions; cf.~\cite[Prop. 11.10]{khalil2023exponential}. 
In particular, it is shown that large subsets in the supports of such measures must locally concentrate near proper subspaces at many scales.
A similar result was recently obtained by Shmerkin in \cite{shmerkin2025inverse}, where the local structure of the non-improved measure $\th$ in \eqref{eq:L2 improving} was also described. In both instances, the proof relies on Hochman's inverse theorem for entropy \cite{Hochman-InverseEntropy} and the asymmetric Balog-Szemer\'edi-Gowers Lemma \cite{TaoVu-book}.
To obtain \eqref{eq:main thm flattening}, this multi-scale concentration is ruled out using \eqref{eq:anc} through induction on scales \cite[Cor. 6.4]{khalil2023exponential}.

A natural question is to find weaker non-concentration conditions than \eqref{eq:anc} under which \eqref{eq: Tsujii into} (and hence \eqref{eq:main thm flattening}) can be shown to hold.
This question served as one of the major motivations of this work.
In this vein, Theorem \ref{main theorem general} provides a natural class of examples where the local non-concentration estimate \eqref{eq:anc} fails on large sets at every scale (due to local concentration of curves along their tangent lines), while the flattening estimate \eqref{eq:main thm flattening} holds.
In particular, such local concentration is a serious obstacle to carrying out the approach of \cite{khalil2023exponential}, and indeed, our proof of Theorem \ref{main theorem general} uses different techniques.

\subsection{On the proof of Theorem~\ref{main theorem general}} 
\label{sec:proof outline}
The proof of Theorem \ref{main theorem general} consists of two main parts, corresponding to two ranges of frequencies which we now introduce.
For $R\geq 1$ and $\e>0$,
    let 
    \begin{align*}
        C_{R,\e} &\stackrel{\mrm{def}}{=} \set{(\th,\zeta)\in \R\times \R^{d-1}: |\th|^\e \leq \norm{\zeta}\leq R},
        \nonumber\\
        E_{R,\e} & \stackrel{\mrm{def}}{=}\set{(\th,\zeta)\in \R\times \R^{d-1}: \norm{\zeta}\leq |\th|^\e \leq  R^\e}.
    \end{align*}
In particular, we have the following decomposition of $B(R)$:
\begin{align}\label{eq:decompose B(R)}
    B(R)  = C_{R,\e} \bigcup E_{R,\e}.
\end{align}
To describe the first ingredient, let $d\geq 1$ and define the moment curve 
$$V_d(x)\stackrel{\mrm{def}}{=} (x,x^2,\dots , x^d) \text{ if } d\geq 2, \qquad \text{ and }\qquad  V_1(x)=x \text{ otherwise}. $$
\begin{thm}\label{thm: case 1}
    Let $\mu$ be a non-atomic self-similar measure  on $\R$. 
    Let $d\geq 2$ and assume that
    \begin{align}\label{eq:flattening of moment d-1}
        \forall \e>0\quad \exists p>1 \quad \forall R >0:\, \norm{ \widehat{V_{d-1}\mu} }^p_{L^p (B(R))} = O_\e(R^\e).
    \end{align}
    Let $g:U\r \R^{d-1} $ be a map defined on an open neighborhood $U$ of $\supp(\mu)$, such that $Q(x)=(x,g(x))$ is either a non-trapped analytic curve, or a $C^{d+1}$ non-degenerate curve.
    Let $\nu =Q\mu$ be the pushforward of $\mu$ to the graph of $g$.
    Then, for every $\e>0$, there is $p_0=p_0(\mu,g,\e)>1$ such that for all $p\geq p_0$, we have
    \begin{align*}
        \int_{C_{R,\e}} |\widehat{\nu}(\xi)|^p\;d\xi =O_{\e}(R^\e).
    \end{align*}
\end{thm}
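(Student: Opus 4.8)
The key structural observation is that on the region $C_{R,\e}$, where the "curved" coordinates $\zeta$ dominate the "linear" coordinate $\th$ up to a power, the non-degeneracy of $Q$ can be leveraged to transfer the problem from the graph of $g$ to the moment curve $V_{d-1}$ in one lower dimension, for which flattening is \emph{assumed} via \eqref{eq:flattening of moment d-1}. Concretely, writing $\xi = (\th, \zeta)$, we have $\hat\nu(\xi) = \int e^{-2\pi i (\th x + \langle \zeta, g(x)\rangle)} \, d\mu(x)$, so $\hat\nu(\th,\zeta)$ is a one-dimensional exponential sum with phase $\th x + \langle \zeta, g(x)\rangle$. First I would freeze $\zeta$ (or rather a suitable rescaling/direction of it) and view this as the Fourier transform of $\mu$ along a \emph{single} non-linear function; but to extract genuine gain we must instead compare to $\widehat{V_{d-1}\mu}$. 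The plan is to use the non-degeneracy hypothesis \eqref{eq:nondegenerate} to write $g$, near each point, in terms of the moment curve up to a diffeomorphism of the parameter and an affine change in the target: a non-degenerate curve is, after an affine map of $\R^{d-1}$ and a $C^{d}$ reparametrization of the interval, a graph close to $(x,x^2,\dots,x^{d-1})$. This lets us relate $|\hat\nu(\th,\zeta)|$ on $C_{R,\e}$ to values $|\widehat{V_{d-1}\mu'}(\eta)|$ where $\mu'$ is $\mu$ pushed by a bi-Lipschitz map (still a non-atomic self-similar-\emph{like} measure, or handled by the self-similar structure directly) and $\eta$ ranges over a region still of polynomial size $R^{O(1)}$.

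The second main step is the $L^p$ bookkeeping. Having reduced pointwise to lower-dimensional moment-curve Fourier transforms, I would integrate $|\hat\nu(\xi)|^p$ over $C_{R,\e}$ by slicing: for each value of $\th$ (the linear frequency), integrate over the admissible $\zeta$-shell $\{|\th|^\e \le \|\zeta\| \le R\}$. On this shell the change of variables from $\zeta$ to the moment-curve frequency $\eta$ has Jacobian controlled by the non-degeneracy determinant \eqref{eq:nondegenerate} (bounded above and below on the compact interval, using non-trappedness to handle the finitely many bad points in the analytic case via the good-intervals lemma, Lemma \ref{lem:good intervals higher dimensions}), so the slice integral is dominated by $\int_{B(R^{O(1)})} |\widehat{V_{d-1}\mu}(\eta)|^p \, d\eta = O_\e(R^{O(\e)})$ by the induction hypothesis \eqref{eq:flattening of moment d-1}, applied with a smaller $\e'$. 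Summing (integrating) over the $O(R)$ relevant values of $\th$ gives $O_\e(R^{1+O(\e)})$ — which is \emph{not} yet $O_\e(R^\e)$. So the real content is that we must gain back the full power of $R$ coming from the $\th$-integration. This is where the lower bound $\|\zeta\|\ge |\th|^\e$ defining $C_{R,\e}$ enters crucially: it forces the non-linear part of the phase to genuinely oscillate at scale commensurate with (a power of) $\th$, so that for each fixed $\zeta$ the map $\th \mapsto \hat\nu(\th,\zeta)$ itself exhibits decay/flattening — effectively $\hat\nu(\th,\zeta)$ looks like the Fourier transform, in the $\th$ variable, of the self-similar measure $\mu$ composed with a large curved perturbation, to which the large-deviations estimate \eqref{eq: Tsujii into} for $\mu$ (equivalently $L^p$-flattening in dimension one, the base case $d=1$ or the $V_1$ statement) applies. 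Thus one does a \emph{two-parameter} $L^p$ estimate: apply flattening in the $\zeta$ (moment-curve) directions to handle a generic $\th$, and flattening in the $\th$ direction to handle the remaining frequencies, and interpolate so that the product of the two gains beats $R^{1}$.

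I expect the main obstacle to be exactly this last point: arranging the two flattening inputs (lower-dimensional moment curve in $\zeta$, self-similar measure in $\th$) so that their gains \emph{multiply} rather than merely one of them applying at a time. The cleanest route is probably a stopping-time / dyadic decomposition of $C_{R,\e}$ into pieces where $\|\zeta\| \asymp 2^k$ and $|\th|\asymp 2^j$ with $\e j \le k \le \log_2 R$; on each piece, rescale so that $\|\zeta\|\asymp 1$ (using that $\mu$ is self-similar, so $V_{d-1}\mu$ is invariant up to bounded distortion under the rescaling induced by the IFS maps $f_i$), reducing to the moment-curve flattening at a frequency-ball of size $\asymp 2^{j(1-\e)}$ or so, with the linear frequency $\th$ now of size $\asymp 2^{j-k}\le 2^{j(1-\e)}$; the point is that after rescaling, the region collapses to the moment-curve region in $\R^{d-1}$ but now \emph{including} the former $\th$-direction as one of its coordinates, so a single application of \eqref{eq:flattening of moment d-1} in dimension $d-1$ covers the whole rescaled piece and the sum over $(j,k)$ is just a sum of $O((\log R)^2)$ terms each $O_{\e'}(2^{j\e'})$, giving $O_\e(R^\e)$ after choosing $\e'$ small relative to $\e$. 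Making the rescaling genuinely respect both the self-similar structure of $\mu$ and the non-degeneracy of $Q$ simultaneously — in particular checking that the rescaled measure is still (uniformly over the scales) a non-atomic self-similar measure to which the hypothesis applies with uniform constants — is the technical heart, and is where the non-trapped/non-degenerate dichotomy and Lemma \ref{lem:good intervals higher dimensions} will do their work.
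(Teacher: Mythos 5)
Your plan differs fundamentally from the paper's proof, and I think the difference exposes a genuine gap. The paper does not attempt any $L^p$ bookkeeping over the $(\theta,\zeta)$-slices: instead it first establishes a uniform \emph{pointwise} estimate (Proposition~\ref{prop:Case 1 general curves}) of the form $|\hat{\nu}(\theta,\zeta)| \ll \norm{\zeta}^{-\gamma}$ for some $\gamma=\gamma(\mu,g)>0$ and all $\zeta\neq 0$. Once that is in hand, the integral over $C_{R,\e}$ is essentially free: after excising the ball of radius $R^{\e/d}$ (which contributes $O(R^\e)$ by the trivial bound), every $\xi\in C_{R,\e}$ has $\norm{\zeta}\geq |\theta|^{\e/d} \gg R^{(\e/d)^2}$, so $|\hat{\nu}|^p \ll R^{-p\gamma(\e/d)^2}$, and taking $p$ large beats the volume factor $R^d$ — this is Corollary~\ref{cor:integrate Case 1 bound}. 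Your ``two-parameter'' scheme, by contrast, is not sound as formulated: the restriction of $\hat{\nu}$ to a $\theta=\mathrm{const}$ slice is the Fourier transform of $\mu$ pushed to the curve $g$ with a $\theta$-dependent linear modulation, and this is \emph{not} $\widehat{V_{d-1}\mu}$ up to a change of variables in $\zeta$. A change of variables $\zeta\mapsto\eta$ would have to depend on the integration variable $x$ inside the phase, which is not a change of variables in the $\zeta$-integral at all. So the hypothesis~\eqref{eq:flattening of moment d-1}, which is an $L^p$ estimate for the single measure $V_{d-1}\mu$, cannot be fed directly into the $\zeta$-slice integrals. The ``rescale by self-similarity'' step has a similar problem: the pushforward $\nu=Q\mu$ to a general non-degenerate curve is \emph{not} self-similar or self-affine under the $f_i$, so rescaled pieces are not copies of $\nu$ with uniform constants.

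The structural ingredient your plan is missing is Lemma~\ref{Lemma self affine}: the pushforward of $\mu$ to the moment curve $V_d$ \emph{is} self-affine, with the same probability vector. The paper first reduces $\hat{\nu}$ to $\widehat{V_d\mu}$ by Taylor-expanding $g$ over cut-set pieces (Lemma~\ref{lemma intermediate step}, using Lemma~\ref{lem:good intervals higher dimensions} exactly as you anticipate to control the non-degeneracy determinant from below). Then, using the self-affine structure and a second cut-set decomposition, $|\widehat{V_d\mu}(\xi)|^2$ is bounded by an \emph{average} of values $|\hat{\mu}(\lambda\,\zeta\cdot y)|$, where $y$ ranges over atoms of a discretized copy of $V_{d-1}\mu$ (Lemma~\ref{Lemma first bound}). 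The induction hypothesis~\eqref{eq:flattening of moment d-1} is then invoked not for an $L^p$ computation but to derive a \emph{Frostman bound on projections} of $V_{d-1}\mu$ onto lines (Theorem~\ref{prop:flattening implies friendly} and Proposition~\ref{Prop:discrete friendly measures}): this says the sampled frequencies $\lambda\,\zeta\cdot y$ are spread out, which is exactly what Tsujii's large-deviations estimate (Corollary~\ref{Coro Tsujii}) needs in order to make all but a negligible fraction of those Fourier values small. Without this route — pointwise gain via self-affinity, Frostman, and Tsujii — the lower bound $\norm{\zeta}\geq|\theta|^\e$ defining $C_{R,\e}$ cannot be converted into the extra power of $R$ that, as you correctly observe, the naive slicing is missing.
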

The following is the second main ingredient in our proof which handles the region $E_{R,\e}$.
\begin{prop} \label{prop: Case 2}
Let $\mu \in \mathcal{P}(\mathbb{R})$ be a non-atomic  self-similar measure and $d\geq 2$. 
Let $g:U\r \R^{d-1} $ be a  $C^1$-map defined on an open neighborhood $U$ of $\supp(\mu)$.
    For $Q(x)=(x,g(x))$, 
    let $\nu =Q\mu$ be the pushforward of $\mu$ to the graph of $g$.
Then, for every $\e >0$, there exists  $p=p(\e, \mu)>1$ such that for all $R\geq 1$, we have
\begin{align*}
    \int_{E_{R,\e}}\left| \widehat{\nu} (\xi) \right|^{p} \, d\xi = O_{\e,\mu}(R^\e).
\end{align*}
\end{prop}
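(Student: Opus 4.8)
\textit{Strategy.} The plan is to use that on $E_{R,\e}$ the frequency $\zeta$ in the curved directions is negligibly small compared to $\th$, so that at the scales that matter $Q=(x,g(x))$ behaves like an affine map, and to reduce the whole estimate to the one-dimensional large deviation bound \eqref{eq: Tsujii into} for $\hat\mu$ (equivalently, to $\norm{\hat\mu}_{L^q(B(S))}^q=O_{\e'}(S^{\e'})$), which holds for every non-atomic self-similar measure on $\R$ by \cite{Tsujii2015self} and is available here since Proposition~\ref{prop: Case 2} presupposes nothing in higher dimensions. Concretely, put $M=\sup_{\supp(\mu)}\norm{g'}$ and $L=\diam{\supp(\mu)}$, fix $(\th,\zeta)\in E_{R,\e}$ with $|\th|$ large, and choose a stopping-time family $\mc I=\mc I(\th)$ of words whose cylinder maps $f_I(x)=r_Ix+t_I$ have contraction ratios $r_I\asymp\d$, where $\d=\d(\th)\asymp|\th|^{-\b\e}$ for a parameter $\b\ge 2$ to be fixed. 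Writing $\mu=\sum_{I\in\mc I}\mbf{p}_I f_I\mu$ and picking a base point $a_I\in f_I(\supp(\mu))$, the bound $|\zeta\cdot(g(f_I(x))-g(a_I))|\le M\norm{\zeta}\,r_I L=O(|\th|^{\e}\d)$ gives
\begin{align*}
\hat\nu(\th,\zeta)=\sum_{I\in\mc I}\mbf{p}_I\,e^{-2\pi i(\th t_I+\zeta\cdot g(a_I))}\,\hat\mu(\th r_I)+O(|\th|^{-\e}),
\end{align*}
with surviving frequencies $|\th r_I|\asymp|\th|^{1-\b\e}$; the $C^1$-hypothesis enters only through this Lipschitz estimate (if one needs the first-order $\zeta$-dependence one instead Taylor-expands $g\circ f_I$ to order one, replacing $\th r_I$ by $(\th+\zeta\cdot g'(a_I))r_I$).

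\textit{Summation.} Next I would split $\int_{E_{R,\e}}|\hat\nu|^p$ dyadically in $|\th|$, so that on a block $|\th|\asymp T$ the family $\mc I$ is essentially constant. Applying \eqref{eq: Tsujii into} at scale $\asymp T^{1-\b\e}$, with a small exponent $\e'$ and the resulting threshold $\d'$, one isolates the $\th$ in the block for which some $\hat\mu(\th r_I)$ is large; off this exceptional set every $\hat\mu(\th r_I)\le T^{-\d'(1-\b\e)}$, so the display above gives $|\hat\nu(\th,\zeta)|\le T^{-\k}$ for a fixed $\k=\k(\e)>0$, uniformly in $\zeta$. Since the slice $\set{\norm{\zeta}\le|\th|^\e}$ has volume $\asymp T^{\e(d-1)}$, the good $\th$ contribute $O(T^{1+\e(d-1)-p\k})\le T^{\e/2}$ once $p$ is taken large (in terms of $\e$ and $\d'$). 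For the exceptional $\th$ one cannot afford to integrate trivially over the $\zeta$-slice; instead the set of $(\th,\zeta)$ in the block on which $|\hat\nu|$ is not small is controlled through the covering of the ``large'' set of $\hat\mu$ furnished by \eqref{eq: Tsujii into}, tracking how it is stretched by the dilations $1/r_I\asymp|\th|^{\b\e}$ and how it meets the $\zeta$-slices; this keeps the exceptional contribution $\le T^{\e/2}$ as well. Summing the geometric series over dyadic $T\le R$ yields $O_\e(R^{\e/2})=O_\e(R^\e)$.

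\textit{Main obstacle.} The difficulty is precisely this last bookkeeping: the cardinality of $\mc I$, the dilation $1/r_I\asymp|\th|^{\b\e}$, and the slice volume $|\th|^{\e(d-1)}$ each contribute small powers of $|\th|$, and these must be balanced against the saving from \eqref{eq: Tsujii into} by choosing $\b$ (equivalently, the stopping word-length) in terms of $d$ and the contraction ratios of the IFS, and then $p$ large in terms of $\e$. The hardest point is the estimate for the exceptional $\th$, where the slice volume $|\th|^{\e(d-1)}$ is largest; handling it requires more than the Lipschitz bound on $g$, namely information on how the $g$-image of $\supp(\mu)$ is spread out (for the curves in Theorem~\ref{main theorem general}, this comes from their non-degeneracy).
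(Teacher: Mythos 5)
Your core strategy is the right one and matches the paper's: choose a cut-set at scale $\t\asymp|\th|^{-2\e}$ (your $\b=2$), use Lipschitz continuity of $g$ to write $\hat\nu(\th,\zeta)=\sum_{\w}\mbf{p}_\w\,e(\cdots)\,\hat\mu(\l_\w\th)+O(|\th|^{-\e})$ on $\norm\zeta\leq|\th|^\e$, split dyadically in $|\th|$, and reduce to Tsujii's flattening for $\hat\mu$. The error term is tamed with a large $p$ exactly as in the paper's Lemma~\ref{lem:annular partition of Case 2}.

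Where you go wrong is the endgame, and the gap is genuine. You try to apply the \emph{pointwise} large-deviation form~\eqref{eq: Tsujii into}, split $\th$ into a good set and an exceptional set, and conclude that handling the exceptional set requires more than Lipschitz regularity of $g$ — in particular non-degeneracy. This is false: the proposition holds for any $C^1$ map $g$, and the paper's proof uses nothing beyond Lipschitz continuity. There is no need to separate good and bad $\th$. After Jensen's inequality (valid because $\sum_\w\mbf{p}_\w=1$) one has $\bigl(\sum_\w\mbf{p}_\w|\hat\mu(\l_\w\th)|\bigr)^p\leq\sum_\w\mbf{p}_\w|\hat\mu(\l_\w\th)|^p$; one then integrates out $\zeta$ (picking up the slice volume $\asymp|\th|^{\e(d-1)}$ uniformly over the cut-set), changes variables $\l_\w\th\mapsto\th$ (picking up a factor $\l_\w^{-1}\asymp|\th|^{2\e}$), and applies the \emph{integrated} $L^p$ form of Tsujii's bound, Corollary~\ref{Coro Tsujii}: $\int_{|\th|\lesssim S}|\hat\mu|^p\,d\th=O_{\e'}(S^{\e'})$. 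Each dyadic block $|\th|\asymp T$ then contributes $\asymp T^{\e(d-1)}\cdot T^{2\e}\cdot T^{\e'}$, and summing over $T\leq R$ gives $O(R^{C\e})$ for a fixed constant $C=C(d)$ (the paper gets $C=d+3$); this suffices, since one simply runs the argument with $\e/C$ in place of $\e$. Incidentally, even your good/bad splitting would deliver the same order: the Tsujii-exceptional $\th$-set in the block, after dilation by $\l_\w^{-1}\asymp T^{2\e}$ and union over the polylogarithmically many contraction ratios in the cut-set (Lemma~\ref{lem:cut set}), has measure $\lesssim T^{2\e+\e'}(\log T)^{O(1)}$, and multiplying by the trivial bound $|\hat\nu|^p\leq1$ and the slice volume $T^{\e(d-1)}$ gives the same $T^{\e(d+1)+\e'}$ up to logarithms. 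So the worry that one "cannot afford to integrate trivially over the $\zeta$-slice" is unfounded; what was missing is the realization that a bound of $O(R^{C\e})$, not $O(R^{\e/2})$, is what one should aim for per block.
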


First, let us show how Theorem \ref{main theorem general} follows quickly from Theorem \ref{thm: case 1} and Proposition \ref{prop: Case 2}.

\begin{proof}
    [\textbf{Proof of Theorem \ref{main theorem general} assuming Theorem \ref{thm: case 1} and Prop.~\ref{prop: Case 2}}]

    We proceed by induction on the ambient dimension $d$.
    By Tsujii's Theorem (Corollary \ref{Coro Tsujii}) for $d=2$, and by induction for $d>2$, we may assume that Hypothesis~\eqref{eq:flattening of moment d-1} holds for $\nu=V_{d-1}\mu$, where $V_{d-1}$ is the moment curve in $\R^{d-1}$.
    By replacing $\mu$ with an affine image of smaller diameter, and replacing $U$ with a smaller neighborhood if necessary, we may assume, after an affine change of coordinates, that $Q(x)$ takes the form $(x,g(x))$, for a map $g:U\r\R^{d-1}$ as in Theorem~\ref{thm: case 1}.
    Hence, in view of~\eqref{eq:decompose B(R)}, Theorem \ref{thm: case 1} and Proposition \ref{prop: Case 2} together imply Theorem \ref{main theorem general}.
\qedhere
\end{proof}

Next, we describe the ideas behind the above two intermediate results, beginning with Proposition \ref{prop: Case 2}.
Here, the main observation is that, via the coordinate relation in $E_{R,\e}$ and the Lipchitz continuity of $\hat{\nu}$, we can fully reduce it to an estimate about the Fourier transform of $\mu$ itself (Lemma \ref{lem:annular partition of Case 2}). This estimate is then obtained in Lemma \ref{Lemma 3 rewrite}, where the end-game step follows from Tsujii's large deviations estimate \eqref{eq: Tsujii into} (Corollary \ref{Coro Tsujii}). The details are given in Section \ref{Section:decay near first coordinate}.

The key to the proof of Theorem \ref{thm: case 1} is the following proposition providing \textit{polynomial pointwise decay} in a large region of of frequencies, from which Theorem \ref{thm: case 1} follows immediately; cf.~Corollary \ref{cor:integrate Case 1 bound} for this deduction.

 \begin{prop}\label{prop:Case 1 general curves}
     Let $\mu$ be a non-atomic self-similar measure  on $\R$. 
    Let $d\geq 2$ and assume that the flattening estimate~\eqref{eq:flattening of moment d-1} holds for $V_{d-1}\mu$.
    Let $g:U\r \R^{d-1} $ be as in Theorem~\ref{thm: case 1}. 
    For $Q(x)=(x,g(x))$, 
    let $\nu =Q\mu$ be the pushforward of $\mu$ to the graph of $g$. Then, there exists $\g=\g(\mu,g)>0$ such that for every $\xi =(\th,\zeta)\in \R\times \R^{d-1}$ with $\zeta\neq \mathbf{0}$, we have $|\hat\nu(\xi)|\ll \norm{\zeta}^{-\g}$.        
    \end{prop}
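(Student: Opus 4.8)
## Proof proposal for Proposition~\ref{prop:Case 1 general curves}

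\textbf{Setup and strategy.} The plan is to exploit the self-similar structure of $\mu$ to decompose $\nu$ into many rescaled pieces living on short subarcs of the graph of $g$, and on each such subarc approximate the curve by its osculating moment curve, so that the hypothesis~\eqref{eq:flattening of moment d-1} for $V_{d-1}\mu$ can be brought to bear. Fix $\xi=(\th,\zeta)$ with $\zeta\neq\mathbf 0$; the target is $|\hat\nu(\xi)|\ll\norm{\zeta}^{-\g}$ for a uniform $\g>0$. Since $\mu=\sum_i \mathbf p_i\, f_i\mu$ and hence $\mu = \sum_{\omega\in\{1,\dots,n\}^k} \mathbf p_\omega\, f_\omega\mu$ for every $k$, we get $\nu = \sum_\omega \mathbf p_\omega\, (Q\circ f_\omega)\mu$. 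Each $f_\omega$ is an affine contraction with $|f_\omega'|=:r_\omega\approx \rho^k$ for a fixed $\rho\in(0,1)$, so $Q\circ f_\omega$ parametrizes a subarc of the graph of diameter $\approx r_\omega$. I would choose the generation $k=k(\norm\zeta)$ so that $r_\omega$ is a small power of $\norm\zeta^{-1}$, say $r_\omega\approx\norm\zeta^{-\kappa}$ for a small constant $\kappa>0$ to be optimized at the end.

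\textbf{Reduction to the moment curve on each piece.} On a subarc around a point $x_\omega\in\supp(f_\omega\mu)$, Taylor-expand $g$ to order $d-1$: writing $x=f_\omega(y)=x_\omega+r_\omega y$ (up to sign), we have $g(f_\omega(y)) = g(x_\omega) + \sum_{j=1}^{d-1}\frac{g^{(j)}(x_\omega)}{j!}(r_\omega y)^j + \text{error}$, where the error is $O(r_\omega^{d})$ in the $C^{d+1}$ case, and on non-trapped analytic curves one controls it uniformly away from the finitely many degeneracy points (here Lemma~\ref{lem:good intervals higher dimensions} and the non-degeneracy/non-trapped hypothesis enter, exactly as they must since the statement fails on affine subspaces). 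The affine part $g(x_\omega)+\text{linear in }y$ contributes only a unimodular phase and a shear of the frequency, so after pulling it out, $\widehat{(Q\circ f_\omega)\mu}(\xi)$ becomes, up to a unimodular factor and an error term, the Fourier transform of $V_{d-1}\mu$ evaluated at a frequency $\eta_\omega\in\R^{d-1}$ built from $r_\omega$, the derivatives $g^{(j)}(x_\omega)$, and $\zeta$. The non-degeneracy condition~\eqref{eq:nondegenerate} guarantees the linear map sending $\zeta$ to $\eta_\omega$ is invertible with norm controlled below, so $\norm{\eta_\omega}\gtrsim r_\omega^{d-1}\norm\zeta \approx \norm\zeta^{1-(d-1)\kappa}$, which is $\gg1$ once $\kappa<1/(d-1)$. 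The error term in the phase has size $O(\norm\zeta r_\omega^{d+1}\cdot\text{something})$ — one must choose $k$ so this is $\ll1$, i.e. $\kappa$ small enough that $\norm\zeta r_\omega^{d}\ll1$; this forces $\kappa>1/d$ to be \emph{too large}, so in fact one cannot make the phase error negligible at a single generation, and this is the main obstacle (see below). The fix is to absorb the error differently: either iterate the construction (a second application of self-similarity inside each piece, a "two-scale" argument) or, more robustly, estimate $|\widehat{(Q\circ f_\omega)\mu}(\xi)|$ by $|\widehat{V_{d-1}\mu}(\eta_\omega)|$ plus a genuinely small remainder by exploiting oscillation/stationary-phase in the error term itself.

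\textbf{Assembling the pieces via $L^p$-flattening.} Granting the pointwise comparison $|\widehat{(Q\circ f_\omega)\mu}(\xi)|\lesssim |\widehat{V_{d-1}\mu}(\eta_\omega)| + (\text{error})$, the triangle inequality gives $|\hat\nu(\xi)|\le \sum_\omega \mathbf p_\omega\bigl(|\widehat{V_{d-1}\mu}(\eta_\omega)|+\text{error}\bigr)$. For the main term I would split the sum by whether $|\widehat{V_{d-1}\mu}(\eta_\omega)|\ge \norm{\eta_\omega}^{-\delta}$ or not, for a small $\delta$ coming from the large-deviations form of~\eqref{eq:flattening of moment d-1} (the equivalence to~\eqref{eq: Tsujii into}-type covering bounds is noted in the text). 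On the "small" part we immediately get $\lesssim \norm{\eta_\omega}^{-\delta}\lesssim \norm\zeta^{-\delta(1-(d-1)\kappa)}$. On the "large" part, the frequencies $\eta_\omega$ lie in a ball of radius $\approx\norm{\eta_\omega}$ and are $\approx r_\omega$-separated as $x_\omega$ ranges over a net of $\supp\mu$; the covering bound from the flattening hypothesis says only $O_\e(\norm{\eta_\omega}^\e)$ of the relevant unit cells contain large values, so only an $O(\norm\zeta^{\e'})$-fraction of the weight $\sum\mathbf p_\omega$ can sit on the large part — using here that $\mu$ is $s$-Frostman (Proposition~\ref{prop:mu is Frostman}) to convert the cardinality bound into a mass bound, $\mathbf p_\omega\approx\mu(f_\omega[0,1])\lesssim r_\omega^{s}$. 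Choosing $\e'$, $\delta$, $\kappa$ appropriately and handling the error term by the two-scale refinement above, all contributions are bounded by $\norm\zeta^{-\g}$ for some explicit $\g=\g(\mu,g)>0$.

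\textbf{Main obstacle.} The crux is the tension in the choice of $\kappa$: making the Taylor remainder in the phase negligible wants $r_\omega$ small relative to $\norm\zeta^{-1/d}$, while keeping $\norm{\eta_\omega}=r_\omega^{d-1}\norm\zeta$ large (so the hypothesis is non-vacuous) wants $r_\omega$ large relative to $\norm\zeta^{-1/(d-1)}$ — a nonempty but delicate window that still leaves the remainder only a small power of $\norm\zeta$, not $\ll1$. Reconciling this — presumably by a second round of self-similar subdivision so that the comparison to $V_{d-1}\mu$ is applied at a scale where the remainder genuinely vanishes, while the outer sum is controlled by Frostman — is where the real work lies, and it is also where the distinction between the $C^{d+1}$ non-degenerate case and the analytic non-trapped case (handling the degeneracy locus via Lemma~\ref{lem:good intervals higher dimensions}) has to be managed carefully.
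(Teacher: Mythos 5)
You have the right skeleton: decompose along a cut-set of scale $\t\asymp\norm{\zeta}^{-\kappa}$, discard the small-mass set of pieces near the degeneracy locus via Lemma~\ref{lem:good intervals higher dimensions} and Frostman regularity, and Taylor-expand $g$ on each remaining piece to reduce to a moment curve. But the scale tension you flag at the end is real, and the fixes you propose (a two-scale iteration, or stationary phase in the error term --- the latter of which cannot work, since $\mu$ is singular and one cannot integrate by parts against it) are not what closes the gap. There is also a further difficulty you do not address: the flattening hypothesis~\eqref{eq:flattening of moment d-1} is an $L^p$ statement, so to use it at the specific frequencies $\eta_\omega$ you must know that the $\eta_\omega$ spread out over $\gg 1$ unit cells as $\omega$ varies, and their mutual separation is of size $\asymp\norm{\zeta}\,\t^d$ --- which is exactly the quantity you are trying to make $\ll 1$ to control the Taylor remainder at order $d-1$. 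These two demands are incompatible at a single cut-set scale, so the proposal as written cannot be completed.

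The paper's resolution is to Taylor-expand $g$ to order $d$, not $d-1$, comparing $\widehat{\nu_\omega}(\xi)$ to $\widehat{V_d\mu}(\xi_\omega)$ for a frequency $\xi_\omega=(\th_\omega,\zeta_\omega)\in\R\times\R^{d-1}$. The phase remainder then becomes $O(\norm{\zeta}\,\t^{d+1})$, which opens a genuine window of admissible scales $\t\asymp\norm{\zeta}^{-\kappa}$ with $\kappa$ roughly in $(1/(d+1),\,1/d)$ that is compatible with $\norm{\zeta_\omega}\gg\norm{\zeta}\,\t^d\gg 1$. The cost is that one needs a pointwise decay estimate $|\widehat{V_d\mu}(\th_\omega,\zeta_\omega)|\ll\norm{\zeta_\omega}^{-\a}$ for the moment curve $V_d$ that is uniform in the uncontrolled first coordinate $\th_\omega$; this is Proposition~\ref{prop:Case 1 Veronese all dims multiple contractions}, proved separately and by a different mechanism. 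That proof uses the fact that $V_d\mu$ is \emph{self-affine} (Lemma~\ref{Lemma self affine}, the Feng--K\"aenm\"aki construction) to expand $|\hat\nu(\xi)|^2$ as an average of $|\hat\mu|$ sampled at frequencies arising from a discretized projection of $V_{d-1}\mu$, and only \emph{there} does the hypothesis on $V_{d-1}\mu$ enter --- through Theorem~\ref{prop:flattening implies friendly} and Proposition~\ref{Prop:discrete friendly measures} --- to give the anti-concentration bound on those projected frequencies that makes Tsujii's covering estimate applicable. In short, you are trying to consume the hypothesis on $V_{d-1}\mu$ directly in the general-curve step, whereas the paper first reduces the general curve to the osculating moment curve $V_d$ by a one-order-higher expansion, and only consumes the $V_{d-1}\mu$ hypothesis inside the moment-curve proposition, where the self-affine algebraic structure is available to make the averaging work.
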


Proposition \ref{prop:Case 1 general curves} is closely related to the aforementioned pointwise Fourier decay results for non-linear pushforwards of self-similar measures obtained in \cite{Algom2023Wu,baker2024polynomial}, as well as to forthcoming work of Banaji and Yu for pushforwards of self-similar measures on $\R^k$ to graphs of analytic maps $g:\R^k\r \R^{k+d}$. 
We give a somewhat different proof of this result here, which we believe will be useful for future generalizations of Theorem \ref{main theorem general}.

It remains to explain the proof of Proposition \ref{prop:Case 1 general curves}.  
First, via Taylor expansion and a change of coordinates, we will deduce Proposition~\ref{prop: Case 2} from the special case $Q(x)=V_d(x)$, where $V_d(x)$ is the moment curve.
This deduction is carried out in Section \ref{sec:case 1 general curves}.

The special case of the moment curve is carried out in Proposition \ref{prop:Case 1 Veronese all dims multiple contractions}. 
The key observation in the proof of the latter,  which is based upon a construction of Feng and K{\"a}enm{\"a}ki   \cite[Lemma 3.1]{feng2018self}, is that the pushed self-similar measure $\nu =V_d\mu$ is, in fact, self-affine: that is, there exist $A_i \in \mrm{GL} (\mathbb{R}^d)$ with $\norm{A_i}<1$, and $b_i\in \R^d$, such that for $F_i(\mbf{x})=A_i\mbf{x}+b_i$, we have
$$\nu = \sum_{i=1} ^n \mbf{p}_i \cdot F_i \mu.$$
See Lemma \ref{Lemma self affine} below for the precise statement.
Note that the probability vector $\mathbf{p}$ is the same one that is used to define $\mu$.

This observation is then used to express the Fourier coefficient $\hat{\nu}(\xi)$ as an \textit{average} over many Fourier coefficients of the original measure $\mu$.
Roughly speaking, since the derivative of $V_d$ is essentially given by a copy of $V_{d-1}$, the set of frequencies we average over arise from a \textit{projection} of $V_{d-1}\mu$ in a suitable direction determined by the original frequency $\xi$. 

We then take advantage of our inductive flattening hypothesis \eqref{eq:flattening of moment d-1} to ensure that \textit{all} projections admit a uniform lower bound on their \textit{upper Frostman exponent}.
Indeed, the latter property in fact holds for any measure satisfying \eqref{eq:flattening of moment d-1}; cf.~Theorem \ref{prop:flattening implies friendly}. 
    It is likely that such Frostman bounds could be verified directly without appealing to~\eqref{eq:flattening of moment d-1}; cf.~\cite[Theorems 2.1(b) and 2.3]{KleinbockLindenstraussWeiss} for special cases of this statement.

The Frostman bound on the projections of $V_{d-1}\mu$ in turn ensures that
our average is sampled along a well-separated set of frequencies, thus enabling us to apply Tsujii's large deviations estimate \eqref{eq: Tsujii into} to conclude the proof.

\subsection{Notation} 
Throughout the article, given two quantities $A$ and $B$, we use the Vinogradov notation $A\ll B$ to mean that there exists a constant $C\geq 1$, possibly depending on the self-similar measure $\mu$, ambient dimension $d$, and the analytic map $g$,  such that $|A|\leq C B$. In particular, we suppress these dependencies except when we wish to emphasize them.
We write $A\ll_{x,y} B$ to indicate that the implicit constant depends on parameters $x$ and $y$.
We also write $A=O_x(B)$ to mean $A\ll_x B$.
Finally, we write $A \asymp_x B$ to mean $A\ll_x B$ and $B\ll_x A$.
For $x\in \R$, 
    \begin{align*}
        e(x) \stackrel{\mrm{def}}{=} e^{2\pi i x}.
    \end{align*}
 Finally, for $v\in\R^n$, $\norm{v}$ denotes the $\max$ norm.

\begin{acknowledgement}
The authors thank Tuomas Orponen for many discussions regarding this project. We also thank Pablo Shmerkin for discussions regarding the inverse Theorem \cite{shmerkin2025inverse}, and Amlan Banaji for comments on an earlier draft.
 O.K. acknowledges NSF support under grants  DMS-2337911 and DMS-2247713. A.A. is supported by Grant No. 2022034 from the United States - Israel Binational Science Foundation (BSF), Jerusalem, Israel.

\end{acknowledgement}

\section{Preliminaries}

\subsection{Self-similar measures} \label{Section self-similar prel.}
 Let 
$$\Phi = \set{f_i(x) =\l_i x + t_i}_{i\in I}, \, |I|=n>1,$$
be a finite set of invertible strictly contracting similitudes preserving a compact interval $J\subseteq \mathbb{R}$ (often we will work with $J=[0,1]$). In particular, $0<|\lambda_i|<1$ for all $i\in I$. We call $\Phi$ a \textit{self-similar IFS} (Iterated Function System). It is well known that there exists a unique compact non-empty set $K=K_\Phi \subseteq J$ such that 
$$K = \bigcup_{i\in I} f_i(K).$$
The set $K$ is called the \textit{attractor} of $\Phi$, and the \textit{self-similar set} generated by it. We always make the assumption that there exist $i,j \in I$ such that the fixed point of $f_i$ differs from that of $f_j$; this is known to imply that $K$ is infinite (in fact, has positive Hausdorff dimension). 

Next, let $\mathbf{p}=(\mbf{p}_1,\dots,\mbf{p}_n)$ be a (strictly positive) probability vector: For all $i$ we have $\mbf{p}_i>0$, and $\sum_i \mbf{p}_i =1$. Then  it is well known that there exists a unique Borel probability measure $\mu$ such that
$$\mu = \sum_i \mbf{p}_i \cdot f_i \mu,\, \text{ where } f_i\mu \text{ is the push forward of } \mu \text{ by } f_i.$$
The measure $\mu$ is called a \textit{self-similar measure}, and is supported on $K$. Occasionally we will use the notation \textit{weighted IFS} and write the pair $(\Phi,\mbf{p})$ to indicate a self-similar IFS paired with a probability vector. Under our assumptions ($K$ is infinite and $\mathbf{p}$ is strictly positive) the measure $\mu$ is known to be non-atomic. Thus, all self-similar measures considered in this paper are non-atomic. 

We next define cut-sets related with $\Phi$ and $\mu$. For more details about this, see e.g. \cite[Chapter 2]{bishop2013fractal}. For $\omega=\omega_1\dots\omega_n\in I^n,n\in \mathbb{N},$ we define $f_\omega := f_{\omega_1} \circ \dots f_{\omega_n}$.
\begin{definition}
    \label{def:cut sets}
    Let $\Phi = \set{f_i(x) =\l_i x + t_i}_{i\in I}$ be a self-similar IFS on $\R$, and let $\t\in (0,1)$. The cut-set corresponding to $\t$ is defined by
    $$P_\t := \left\lbrace \omega=(\omega_1,\dots,\omega_k) \in I^*: \, \left|\l_{\omega_1}\cdot \dots \l_{\omega_k}\right|<\t \text{ yet } \left|\l_{\omega_1}\cdot \dots \l_{\omega_{k-1}}\right|\geq \t \right\rbrace.$$
    Given a probability vector $\mbf{p}$, and a self-similar measure $\mu$ associated to $(\Phi,\mbf{p})$, we set
      \begin{align*}
       \mu_\t \stackrel{\mrm{def}}{=} \sum_{\w\in P_\t} \mathbf{p}_\w \d_{t_\w},
    \end{align*}
    where
    $$t_\w= f_\omega (0) \text{ for } \omega \in P_\t.$$
\end{definition}
We record a number of standard facts regarding cut-sets. See e.g. \cite[Lemma 2.2]{Algom2023Wu} for a closely related discussion.
\begin{lem}\label{lem:cut set}
    Let $\mu$ be a non-atomic self-similar measure on $\mathbb{R}$ with respect to the weighted IFS $(\Phi,\mbf{p})$. If $\t\in (0,1)$ is sufficiently small then:
\begin{enumerate}
    \item We have 
    $$\mu = \sum_{\w\in P_\t} \mathbf{p}_\w \cdot f_\w\mu, \text{ where } \mathbf{p}_\omega = \prod_{i=1} ^{|\omega|} \mathbf{p_i},\, \text{ and } f_\omega = f_{\omega_1}\circ \dots \circ f_{\omega_{|\omega|}}.$$
    In particular,
    $$\widehat{\mu} = \sum_{\w\in P_\t} \mathbf{p}_\w \cdot \widehat{f_\w\mu}.$$
    
    \item 
    Let $\L_\t \subseteq (-1,1)$ denote the set 
    $$\L_\t:=\lbrace r_\omega:\, \omega \in P_\t\rbrace, \, \text{ where } r_\omega = r_{\omega_1}\cdot \dots r_{\omega_{|\omega|}}.$$
    Then, $\# \L_\t \ll (-\log \t)^{n+1}$, where $n=|\Phi|$.
\end{enumerate}    
\end{lem}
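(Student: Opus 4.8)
The statement to prove is Lemma~\ref{lem:cut set}, which records standard facts about cut-sets: (1) the self-similarity relation $\mu = \sum_{\w\in P_\t} \mbf{p}_\w f_\w\mu$ with the factored probabilities, and (2) the bound $\#\L_\t \ll (-\log\t)^{n+1}$ on the number of distinct contraction ratios appearing among words in $P_\t$.

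\medskip

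\noindent\textbf{Proof proposal.}

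For part (1), the plan is to iterate the defining stationarity relation $\mu = \sum_{i\in I} \mbf{p}_i f_i\mu$. One shows by induction on word length that for any $k$, $\mu = \sum_{\w \in I^k} \mbf{p}_\w f_\w\mu$, where $\mbf{p}_\w = \prod_{j=1}^{k} \mbf{p}_{\w_j}$ and $f_\w = f_{\w_1}\circ\cdots\circ f_{\w_k}$; this is immediate since pushforward is functorial and $\mbf{p}$ is a probability vector. To pass from balls in $I^k$ to the cut-set $P_\t$, note that $P_\t$ is a \emph{section} of the word tree: every infinite word has exactly one prefix in $P_\t$ (because the product of contraction ratios is strictly decreasing in modulus and tends to $0$, so it crosses the threshold $\t$ exactly once), and $P_\t$ is finite. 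One then groups the sum over $I^k$ for $k$ large (larger than the length of any word in $P_\t$, which is finite) according to the unique $P_\t$-prefix, and uses the stationarity relation again on each $f_\w\mu$ with $\w \in P_\t$ to collapse the tail; the multiplicativity $\mbf{p}_{\w\w'} = \mbf{p}_\w\mbf{p}_{\w'}$ and $f_{\w\w'} = f_\w\circ f_{\w'}$ makes the bookkeeping work out. The Fourier-transform statement then follows by applying $\widehat{\;\cdot\;}$ to both sides and using linearity together with the fact that $\widehat{\sum a_j \rho_j} = \sum a_j \widehat{\rho_j}$ for finite sums of finite measures. I do not expect any obstacle here; it is routine.

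For part (2), I would count the possible values of $r_\w = r_{\w_1}\cdots r_{\w_{|\w|}}$ for $\w\in P_\t$. Write $|r_i| = e^{-a_i}$ with $a_i > 0$, and let $a_{\min} = \min_i a_i$, $a_{\max} = \max_i a_i$. For $\w\in P_\t$ the definition forces $|r_{\w_1}\cdots r_{\w_{|\w|-1}}| \geq \t$, i.e. $\sum_{j<|\w|} a_{\w_j} \leq -\log\t$, and adding $a_{\w_{|\w|}} \leq a_{\max}$ gives $\sum_{j\leq|\w|} a_{\w_j} \leq -\log\t + a_{\max}$; hence $|\w| \leq (-\log\t + a_{\max})/a_{\min} = O(-\log\t)$. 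Now the value $r_\w$ is determined by the \emph{multiset} $\{\w_1,\dots,\w_{|\w|}\}$ (since the factors commute), equivalently by the vector $(m_1,\dots,m_n)$ of multiplicities with $\sum_i m_i = |\w| \leq C(-\log\t)$ for a constant $C$. The number of such vectors is at most $\big(C(-\log\t)+1\big)^n \ll (-\log\t)^n$. Actually, to match the stated exponent $n+1$, one sums over the possible lengths $\ell = |\w|$ from $1$ to $C(-\log\t)$, and for each $\ell$ bounds the number of multiplicity vectors by $\multiset{n}{\ell} \leq (\ell+1)^{n-1}$ or simply by $\ell^{n}$; crudely, $\#\L_\t \le \sum_{\ell=1}^{C(-\log\t)} (\text{number of multisets of size }\ell) \ll (-\log\t)\cdot(-\log\t)^{n} $ if one uses the lazy bound, or one gets $(-\log\t)^{n}$ with the sharper stars-and-bars count — in any case $\#\L_\t \ll (-\log\t)^{n+1}$ as claimed, and the slack in the exponent is harmless.

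The only mildly delicate point in part (2) is making sure $r_\w$ really depends only on the multiset of letters and not on their order, which is clear since $\R^\times$ (or its subset of contraction ratios) is abelian under multiplication, and that the length $|\w|$ is uniformly bounded, which I handled above via the lower-bound constraint on the penultimate partial product. There is no real obstacle; this is a counting exercise, and I would present it as such, taking care only to state the length bound and the multiset parametrization cleanly.
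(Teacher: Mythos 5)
Your proposal is correct and follows essentially the same approach as the paper: Part (1) is the standard stopping-time (section-of-the-tree) argument that the paper simply cites, and Part (2) uses the same two ingredients, namely that $r_\omega$ depends only on the multiplicity vector of letters and that $|\omega| = O(-\log\tau)$ for $\omega \in P_\tau$. The only cosmetic difference is that you count all multiplicity vectors with $\sum_i m_i \leq C(-\log\tau)$ in one stroke, which actually yields the marginally sharper bound $\ll (-\log\tau)^n$, whereas the paper fixes the word length $m$, counts $O(m^n)$ values by stars-and-bars, and sums over the $O(-\log\tau)$ admissible lengths to get $(-\log\tau)^{n+1}$; either route proves the stated estimate.
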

\begin{proof}
The proof of Part (1) follows from a standard stopping time type argument, see e.g. \cite[Definition 2.2.3 and Lemma 2.2.4]{bishop2013fractal}.

As for the numerical estimate in Part (2), let us first estimate, for $m\in \mathbb{N}$
$$\left| \left\lbrace r_{\eta}:\, \eta\in \mathcal{I}^m \right\rbrace \right|.$$
That is, we count how many different contraction ratios the maps in the IFS 
$$\Phi^m:= \lbrace f_\eta:\, \eta\in \mathcal{I}^m \rbrace$$
can admit. Note that for any map $f_\eta(x)=r_\eta \cdot x +t_\eta \in \Phi^m$, $r_\eta$ only depends on the amount of times each $r_i$ appears in $\eta$, for $i\in \mathcal{A}$. So, writing 
$$n_i = n_i (\eta) = \left| \left\lbrace 1\leq j \leq m:\, \eta_j = i\right\rbrace \right|,$$
we have 
$$\log \left| r_\eta \right| = \sum_{i=1} ^n n_i \log \left| r_{i} \right|,\quad \text{ and } \sum_{i=1} ^n n_i = m.$$
By a standard combinatorial argument, there are at most
$$\binom{n+m-1}{m}$$
different possible values for this sum. It follows that
$$\left| \left\lbrace r_{\eta}:\, \eta\in \mathcal{A}^m \right\rbrace \right| \leq O\left( m^n \right),$$
where the implicit constant is bounded independently of $m$.

Finally, since $\min_{i\in \mathcal{A}} |r_i|>0$,  there exists some $d\in  \mathbb{N}$ such that: For all $\t>0$ and all $\omega \in \mathcal{A}^\mathbb{N}$,
$$\max \left\lbrace \,  |\omega|:\, \omega \in P_\t \right\rbrace \leq -d\cdot \log \tau.$$
Thus, by definition 
$$ \mathcal{P}_\t  \subseteq \bigcup_{i=1} ^{[-\log \t]+1} \left\lbrace r_{\eta}:\, \eta\in \mathcal{I}^{d\cdot i} \right\rbrace,$$
where $[-\log \tau]$ is the integer part of $-\log \tau$. So, via our previous estimates,
$$\left| \left\lbrace r_{\eta}:\, \eta\in \mathcal{P}_\t \right\rbrace \right| \leq \sum_{i=1} ^{[-\log\t]+1} O\left( (d\cdot i)^n \right) \leq ([-\log \t]+1)\cdot O\left( (d\cdot ([- \log\t]+1))^{n} \right)  = O\left( (- \log\tau) ^{n+1} \right).$$
This is the required bound.    
\end{proof}

\subsection{Frostman exponents of projections and discretizations}
Let $\mu$ be a non-atomic self-similar measure on $\R$ for an IFS $\Phi$ and a strictly positive probability vector $\mathbf{p}$.
We require the following uniform estimate on the Frostman exponent of \textit{projections} of pushforwards of the discrete measures $\mu_\t$ to non-degenerate curves.
     For an  affine subspace $W\subset \R^d$ and $\e>0$, we denote by $W(\e)$ be the open $\e$-neighborhood of $W$.
\begin{prop}\label{Prop:discrete friendly measures}
Let $\mu\in \mathcal{P}(\mathbb{R})$ be a non-atomic self-similar measure. Let $d\geq 1$ and let $g:U\r \R^d$ be a Lipschitz continuous map defined on an open neighborhood $U$ of $\supp(\mu)$. Let $Q(x)=(x,g(x))$ and $\t\in (0,1)$ and consider the measures $\nu,\,\nu_\t \in \mathcal{P}(\mathbb{R}^{d+1})$ defined by
$$\nu:=Q\mu, \quad \text{ and } \quad \nu_\t:=Q\mu_\t.$$
    Suppose that $\nu$ satisfies \eqref{eq:main thm flattening}.
    Then, there are $\b,\varrho>0$ and $C\geq 1$ such that for every proper affine subspace $W\subset \R^d$  we have 
    \begin{align*}
        \nu_\t(W(\e)) \leq C\e^\b,
        \quad \text{ for all } \e >\t^\varrho.
    \end{align*}
\end{prop}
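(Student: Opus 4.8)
The plan is to deduce the Frostman bound on the discretized, pushed-forward measures $\nu_\t$ from the flattening estimate \eqref{eq:main thm flattening} for the continuous measure $\nu$, via the equivalence between $L^p$-flattening and a uniform lower bound on the $L^\infty$-dimension (Frostman exponent) together with a comparison between $\nu_\t$ and $\nu$ at scales $\gg \t$. The key conceptual point, which the excerpt's outline flags (``Frostman bound on the projections... ensures our average is sampled along a well-separated set of frequencies''), is that flattening of $\nu$ implies $\nu$ is $s$-Frostman for some $s>0$ after convolving finitely many times, and in fact $\nu$ itself — or rather its projections — have a positive upper Frostman exponent. I would organize the argument as follows.

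\textbf{Step 1: From flattening to a Frostman exponent for $\nu$.} Using the equivalence recorded in the introduction, \eqref{eq:main thm flattening} for $\nu$ is equivalent to $s_m(\nu^{\ast p},2) = O_\e(2^{m(\e-d)})$ for a suitable $p=p(\e)$. By Young's/Jensen's inequality (as in \cite[Lemma 5.2]{MosqueraShmerkin}, invoked in the proof of Corollary \ref{main corollary}(1)), this upgrades to a bound $s_m(\nu^{\ast p},\infty) \leq 2^{-m(d-\e)}$ for large $m$, i.e. $\nu^{\ast p}(B(x,2^{-m})) \leq 2^{-m(d-\e)}$ uniformly in $x$. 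Since $\nu$ is compactly supported, $\nu(B(x,r)) \leq \nu^{\ast p}(B(x',2r))$ for an appropriate recentering $x'$ (push one factor's mass forward), giving that $\nu$ is $(d-\e)$-Frostman on balls — but more usefully, it gives a uniform \emph{positive} Frostman exponent $\a_0>0$, independent of the subspace, for the pushforward. Actually, what is really needed downstream is the analogous statement \emph{after projecting to hyperplanes and intersecting with neighborhoods of affine subspaces}; this is exactly the content of \cite[Theorem]{prop:flattening implies friendly} referenced in the outline (``the latter property in fact holds for any measure satisfying \eqref{eq:flattening of moment d-1}''). So Step 1 is really: invoke Theorem \ref{prop:flattening implies friendly} to conclude that there are $\b_0>0$, $C_0\geq 1$ with $\nu(W(\e)) \leq C_0 \e^{\b_0}$ for every proper affine subspace $W\subset\R^d$ and all $\e>0$.

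\textbf{Step 2: Transfer from $\nu$ to $\nu_\t$ at scales $\gg\t$.} The measures $\nu$ and $\nu_\t$ agree up to the coarseness $\t$: concretely, $\nu = \sum_{\w\in P_\t}\mbf p_\w\, f_\w\mu$ pushed by $Q$, while $\nu_\t = \sum_{\w\in P_\t}\mbf p_\w\, \d_{Q(t_\w)}$, and since each $f_\w$ contracts $J$ to an interval of length $\ll\t$ and $g$ is Lipschitz, each cylinder piece $Q(f_\w(\supp\mu))$ lies within distance $O(\t)$ of the point $Q(t_\w)$. Hence for any set $S$, $\nu_\t(S) \leq \nu(S(C_1\t))$ for a constant $C_1$ depending only on $Q$ and $\Phi$. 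Applying this with $S = W(\e)$ and noting $W(\e)(C_1\t) \subseteq W(\e + C_1\t) \subseteq W(2\e)$ provided $\e \geq C_1\t$, we get
\[
\nu_\t(W(\e)) \leq \nu(W(2\e)) \leq C_0 (2\e)^{\b_0} \leq C\,\e^{\b_0}
\]
for all $\e \geq C_1\t$. Taking $\varrho = 1$ (so that $\t^\varrho = \t \leq \e/C_1$ up to adjusting constants — or simply absorbing $C_1$ by replacing $\t$ with a small power, which is harmless and gives the stated form with some $\varrho>0$), $\b = \b_0$, and $C = C_0 2^{\b_0}$, this is the desired conclusion. Note that replacing ``$\e \geq C_1\t$'' by ``$\e > \t^\varrho$'' is legitimate for $\t$ small since $\t^\varrho \gg \t$; one picks $\varrho<1$ so that $\t^\varrho \geq C_1\t$ for all small $\t$.

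\textbf{Main obstacle.} The only substantive input is Step 1 — that \eqref{eq:main thm flattening} forces a \emph{uniform-in-$W$} Frostman bound on neighborhoods of \emph{all} proper affine subspaces, not merely on balls. Flattening directly controls only ball-concentration ($L^\infty$-dimension), and a priori a measure with large $L^\infty$-dimension could still charge a neighborhood of a hyperplane heavily (e.g. a measure spread thinly along a hyperplane). What rescues us is that $\nu$ lives on a non-degenerate curve, so it cannot concentrate near any hyperplane; but proving the uniform bound in the stated generality is precisely Theorem \ref{prop:flattening implies friendly}, which I would cite wholesale. Granting that theorem, the rest — Step 2 — is a routine covering/neighborhood comparison. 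If one wanted to avoid Theorem \ref{prop:flattening implies friendly}, one could instead argue directly: cover $W(\e)\cap\supp\nu$ by $O(\e^{-(d-1)})$ balls of radius $\e$, bound each by the ball-Frostman exponent $\e^{d-\e'}$ from flattening, multiply, and use that the curve meets $W(\e)$ in a set of diameter controlled by non-degeneracy — but the bookkeeping there is exactly what Theorem \ref{prop:flattening implies friendly} packages, so citing it is cleanest.
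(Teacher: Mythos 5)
Your proof is correct, and it follows the same high-level skeleton as the paper's: cite Theorem~\ref{prop:flattening implies friendly} to get $\nu(W(\e))\ll\e^\b$ for all proper affine $W$, then transfer this from $\nu$ to the discretization $\nu_\t$ at scales $\e$ somewhat larger than $\t$. Where you genuinely differ is in the transfer step (your Step 2). The paper first establishes a quantitative weak-$*$ rate $|\mu_\t(\vp)-\mu(\vp)|\ll\t^\g\norm{\vp}_{\Lip}$ (a Hutchinson-type estimate relying on uniform contraction of the IFS), and then tests both measures against a Lipschitz bump $\vp$ that is $1$ on $W(\e)$ and $0$ outside $W(2\e)$, with $\norm{\vp}_{\Lip}\ll\e^{-1}$; this produces the explicit exponent $\varrho=\g/(1+\b)$. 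You instead observe the purely geometric inclusion $\nu_\t(S)\leq\nu(S(C_1\t))$ for every Borel $S$, using only that each cylinder piece $Q f_\w\mu$ has support of diameter $O(\t)$ around the atom $Q(t_\w)$; specializing to $S=W(\e)$ and requiring $\e\geq C_1\t$ (hence any $\varrho<1$ works for $\t$ small) finishes. Your route is more elementary, avoiding the Lipschitz-dual rate entirely; the paper's route is perhaps more robust since it only needs weak-$*$ proximity, not the literal support containment. Both are valid.

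One blemish worth flagging: in your preliminary heuristic in Step 1, the claimed inequality $\nu(B(x,r))\leq\nu^{\ast p}(B(x',2r))$ for some recentering $x'$ is false in general (take $\nu$ uniform on $[0,1]$ and $r=1/2$: the left side is $1$ while the right side is $O(p^{-1/2})$). A correct version is $\nu(B(x,r))^p\leq\nu^{\ast p}(B(px,pr))$, which transfers a Frostman exponent $s$ for $\nu^{\ast p}$ into an exponent roughly $s/p$ for $\nu$. This does not affect your final argument, since you correctly discard the heuristic and invoke Theorem~\ref{prop:flattening implies friendly} wholesale, but the side remark as written is wrong.
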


Proposition~\ref{Prop:discrete friendly measures} is a rather direct consequence of the following Theorem:

\begin{thm}[{\hspace{.1pt}\cite[Theorem 6.23]{khalil2023exponential}}]
\label{prop:flattening implies friendly}
    Let $\nu$ be a compactly supported probability measure on $\R^d$ satisfying~\eqref{eq:main thm flattening}. 
    Then, there are $\b>0$ and $C\geq 1$ such that $\nu(W(\e)) \leq C\e^\b$ for all proper affine subspaces $W\subset \R^d$ and $\e>0$.
\end{thm}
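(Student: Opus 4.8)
The plan is to prove the statement directly, in the form: \emph{if $\nu$ concentrates polynomially near some proper affine subspace at some scale, then the self-convolutions of $\nu$ cannot flatten in $L^2$}. Throughout I write $r$ for the neighborhood radius (denoted $\e$ in the statement), reserving $\e$ for the exponent in~\eqref{eq:main thm flattening}. First I would fix $\e:=1/2$ and take the corresponding exponent $p=p(\e,\nu)>1$ from~\eqref{eq:main thm flattening}, enlarging it so that $p\ge 4$ --- harmless since $|\hat\nu|\le 1$. Put $q:=\lceil p/4\rceil$, let $\tilde\nu$ be the image of $\nu$ under $x\mapsto -x$, and $\th:=\nu\ast\tilde\nu$, so that $\hat\th=|\hat\nu|^2$ and $\widehat{\th^{\ast q}}=|\hat\nu|^{2q}$. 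Because $4q\ge p$ and $|\hat\nu|\le 1$, one has $\norm{\hat\nu}_{L^{4q}(B(R))}^{4q}\le\norm{\hat\nu}_{L^{p}(B(R))}^{p}$, so Lemma~\ref{lem:Fourier vs moment sum} converts~\eqref{eq:main thm flattening} into a moment-sum bound for the symmetrized self-convolution:
\[
s_m(\th^{\ast q},2)\ \asymp\ 2^{-dm}\,\norm{\hat\nu}_{L^{4q}(B(2^m))}^{4q}\ \le\ 2^{-dm}\,\norm{\hat\nu}_{L^{p}(B(2^m))}^{p}\ =\ O\big(2^{m(\e-d)}\big),\qquad m\ge 1,
\]
with implied constant depending only on $\nu$.

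The main work is then to show that heavy concentration of $\nu$ near $W$ produces heavy concentration of $\th^{\ast q}$ near the linear direction space of $W$, losing only polynomially in the mass. Write $W=v_0+L$ with $L$ linear of dimension $\ell=d-k$, $k\ge 1$, and set $\rho:=\nu(W(r))$. If $x,y\in W(r)$ then $x-y\in L(2r)$, so $\th(L(2r))\ge\rho^2$; and as $L$ is closed under addition, $\th^{\ast q}(L(2qr))\ge\rho^{2q}$. Since $\th^{\ast q}$ is compactly supported, for every $m$ with $2^{-m}\ge 2qr$ only $\ll 2^{m\ell}$ cubes of $\Dcal_m$ meet both $L(2qr)$ and $\supp(\th^{\ast q})$; calling this collection $\Ncal_m$, Cauchy--Schwarz gives $\rho^{2q}\le\th^{\ast q}(L(2qr))\le\sum_{D\in\Ncal_m}\th^{\ast q}(D)\le(\#\Ncal_m)^{1/2}\,s_m(\th^{\ast q},2)^{1/2}\ll 2^{m\ell/2}\,s_m(\th^{\ast q},2)^{1/2}$, hence $s_m(\th^{\ast q},2)\gg\rho^{4q}\,2^{-m\ell}$. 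Combining this with the bound of the first paragraph yields $\rho^{4q}\ll 2^{m(\e-k)}$ for every $m\le\log_2(1/(2qr))$. As $\e-k<0$, choosing $m$ as large as the constraint allows gives $\rho^{4q}\ll r^{k-\e}\ll r^{1/2}$ once $r$ is small, i.e.\ $\nu(W(r))\ll r^{1/(8q)}$, with constants depending only on $\nu$ and $d$, not on $W$ or $r$. Enlarging the constant to cover the trivial range of $r$ bounded away from $0$ gives the conclusion with $\b:=1/(8q)$.

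The step I expect to be the crux is the transfer in the second paragraph: turning concentration near an \emph{affine} subspace into concentration of a \emph{self-convolution} near a \emph{linear} subspace, at comparable scale and with at most polynomial loss of mass. Convolving with the reflection $\tilde\nu$ (rather than working with $\nu$, or with $\nu\ast\nu$) is exactly what makes the inclusion $W(r)-W(r)\subseteq L(2r)$ available, after which $L$ --- being linear --- is stable under the remaining $q-1$ convolutions. It is also the reason the argument must use that the flattening exponent $p$ in~\eqref{eq:main thm flattening} may grow with the target $\e$: the saving $2^{m(\e-k)}$ only beats the trivial estimate once $m$ is pushed down to the scale of $r$, and the codimension $k\ge 1$ is what is harvested there. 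The remaining ingredients --- the slab count $\#\Ncal_m\ll 2^{m\ell}$ on a bounded region, the identity $s_m(\eta,2)\asymp 2^{-dm}\norm{\hat\eta}_{L^2(B(2^m))}^2$ for compactly supported $\eta$ (Lemma~\ref{lem:Fourier vs moment sum}), and the monotonicity of $L^p(B(R))$-norms for functions bounded by $1$ --- are routine.
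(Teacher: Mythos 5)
Your argument is correct. A point worth flagging before comparing: the paper does not prove Theorem~\ref{prop:flattening implies friendly} internally --- it is cited to \cite[Theorem 6.23]{khalil2023exponential}, with the remark that the cited proof is written for measures satisfying~\eqref{eq:main thm flattening}. So there is no in-paper argument to match against; what you have supplied is a self-contained proof using only Lemma~\ref{lem:Fourier vs moment sum} from this paper.

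Your chain of reasoning holds up at every step. Fixing $\e=1/2$, enlarging $p$ to $4q$ (harmless since $|\hat\nu|\le 1$), and symmetrizing via $\th=\nu\ast\tilde\nu$ so that $\widehat{\th^{\ast q}}=|\hat\nu|^{2q}$ is a clean way to convert the $L^p$-flattening into the moment-sum bound $s_m(\th^{\ast q},2)\ll 2^{m(1/2-d)}$ via Lemma~\ref{lem:Fourier vs moment sum}. The reflection trick is exactly the right move: $W(r)-W(r)\subseteq L(2r)$ kills the translation $v_0$, and since $L+L=L$ the linear slab is preserved under the remaining $q-1$ convolutions, giving $\th^{\ast q}(L(2qr))\ge\rho^{2q}$. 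The cube count $\#\Ncal_m\ll 2^{m\ell}$ at scales $2^{-m}\ge 2qr$ uses only the boundedness of $\supp(\th^{\ast q})$ and is uniform over $W$; Cauchy--Schwarz then gives $s_m(\th^{\ast q},2)\gg \rho^{4q}2^{-m\ell}$, and pushing $m$ to the threshold $\log_2(1/(2qr))$ harvests the codimension $k\ge 1$, yielding $\rho\ll r^{1/(8q)}$ for small $r$ with constants depending only on $\nu$ and $d$. The trivial range of $r$ is absorbed by enlarging $C$. This is a complete and efficient proof, and its self-contained nature is arguably a small improvement over the citation currently in the paper.
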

\begin{remark}
    The statement of Theorem~\ref{prop:flattening implies friendly} is different from the reference~\cite[Theorem 6.23]{khalil2023exponential}, however the proof of the latter is written for measures satisfying~\eqref{eq:main thm flattening}.
\end{remark}

\begin{proof}[\textbf{Proof of Proposition \ref{Prop:discrete friendly measures}}]
    First, since the IFS is uniformly contracting, for some $\g>0$ we have
    $$|\mu_\t(\vp) -\mu(\vp)| \ll \t^\g\norm{\vp}_{\mrm{Lip}},$$
    for any Lipschitz function $\vp$, where $\norm{\vp}_{\mrm{Lip}}$ is the Lipschitz constant of $\vp$.
    This follows for instance by straightforward adaptation of the argument of~\cite[Theorem 4.4.1(ii)]{hutchinson1981fractals} to averages over general cut-sets.
    Hence, the same bound holds for $\nu$ and $\nu_\t$ in place of $\mu$ and $\mu_\t$. To conclude the proof, let $B\subset \R^d$ be a large ball containing the supports of $\nu$ and $\nu_\t$ for all $\t\in (0,1)$.
    Let $\vp$ be a Lipschitz function that is identically $1$ on $W(\e)\cap B$ and vanishing outside $W(2\e)$. In particular, $\norm{\vp}_{\mrm{Lip}}\ll \e^{-1}$. 
    Combined with Theorem ~\ref{prop:flattening implies friendly}, we obtain
    \begin{align*}
        \nu_\t(W(\e)) \leq \nu(\vp) + O(\e^{-1}\t^\g)
        \leq \nu(W(2\e)) + O(\e^{-1}\t^\g)
        =
        O(\e^\b + \e^{-1}\t^\g),
    \end{align*}
    for some $\a>0$.
    The above bound is thus $O(\e^\b)$ whenever $\e> \t^{\g/(1+\b)}$.
\end{proof}

We also recall the following fact, due to Feng and Lau \cite{Feng2009Lau}, that non-atomic self-similar measures are always upper Frostman (H{\"o}lder) regular. See \cite{gorodetski2022h} for a more recent and more general version. 
\begin{prop}[{\hspace{.1pt}\cite[Proposition 2.2]{Feng2009Lau}}]
\label{prop:mu is Frostman}
Let $\mu$ be a non-atomic self-similar measure on $\mathbb{R}$. Then, there exists some $s_0=s_0(\mu)>0$ such that
$$\sup_{x\in \mathbb{R}} \mu\left( B(x,r) \right) \ll_{\mu} r^{s_0}, \quad  \text{ for all } r>0. $$
In particular, there is $\varrho >0$ such that for every $\t \in (0,1)$, we have that 
\begin{align*}
    \sup_{x\in \mathbb{R}} \mu_\t\left( B(x,r) \right) \ll_{\mu} r^{s_0}, \quad \text{ for all } r>\t^\varrho,
\end{align*}
where $\mu_\t$ is the discretization of $\mu$ defined in Def.~\ref{def:cut sets}.
\end{prop}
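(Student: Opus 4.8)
The plan is to prove the first displayed inequality — the uniform Frostman bound for $\mu$ itself — using the self-similarity, and then deduce the statement for the discretizations $\mu_\t$ from it by a comparison argument of the kind already carried out in the proof of Proposition~\ref{Prop:discrete friendly measures}.

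For the bound on $\mu$, I would argue as follows. Since $K$ is infinite, after passing to an iterate of the IFS we may assume there are two maps $f_i, f_j$ whose images $f_i(J), f_j(J)$ are disjoint; hence there is a cylinder word $\w$ and a constant $c_0 \in (0,1)$ such that $\mu(I_\w) \le 1 - c_0$ for every basic cylinder interval $I_\w$ of level $1$ — more precisely, one shows that there is $\rho \in (0,1)$ and $N \in \N$ so that every interval of length $\rho^{N}\cdot(\text{diam }J)$ meets at most one of a suitable disjoint collection, forcing $\mu$ of any sufficiently short interval to be bounded by the maximal cylinder weight of a fixed positive depth, which is $\le 1 - c_0 < 1$. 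Iterating the self-similarity relation $\mu = \sum_\w \mathbf{p}_\w f_\w\mu$ over the cut-set $P_\t$, any interval $B(x,r)$ with $r$ comparable to $\t$ is covered by boundedly many cylinders $f_\w(J)$, $\w\in P_\t$, each of which has $\mathbf{p}_\w \ll \t^{s_0}$ for $s_0 := \log(\max_i \mathbf{p}_i)/\log(\max_i |\l_i|) > 0$ after replacing $\mathbf p_i$ by the weight of a fixed-depth word — so that $\mu(B(x,r)) \ll \t^{s_0} \cdot (\text{number of cut-set words meeting } B(x,r))$. The number of such words is $O(1)$ by bounded overlap (a consequence of the fact that each $f_\w(J)$ has length $\asymp \t$ and the $f_\w(J)$, $\w\in P_\t$, have bounded overlap, which itself follows from the uniform contraction together with $J$ being a common invariant interval), giving $\mu(B(x,r)) \ll r^{s_0}$. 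I would cite \cite[Proposition 2.2]{Feng2009Lau} for this part, but sketch the argument as above.

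For the $\mu_\t$ statement: take $\varrho$ to be the constant $\g/(1+s_0)$ produced (implicitly) in the proof of Proposition~\ref{Prop:discrete friendly measures}, or simply redo that comparison here. Concretely, there is $\g>0$ with $|\mu_\t(\vp) - \mu(\vp)| \ll \t^\g \norm{\vp}_{\mrm{Lip}}$ for every Lipschitz $\vp$; choosing $\vp$ identically $1$ on $B(x,r)$, vanishing outside $B(x,2r)$, with $\norm{\vp}_{\mrm{Lip}} \ll r^{-1}$, gives
\begin{align*}
    \mu_\t(B(x,r)) \le \mu(\vp) + O(r^{-1}\t^\g) \le \mu(B(x,2r)) + O(r^{-1}\t^\g) \ll r^{s_0} + r^{-1}\t^\g,
\end{align*}
which is $\ll r^{s_0}$ as soon as $r > \t^{\g/(1+s_0)} =: \t^{\varrho}$. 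This takes care of both assertions.

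The main obstacle is the bounded-overlap / uniform-positive-gap input used in the proof of the bound for $\mu$: without any separation assumption (no open set condition is assumed here), one cannot claim the $f_\w(J)$ are disjoint, only that they cover $K$ with multiplicity controlled by a constant depending on $\Phi$. Making this multiplicity bound precise — i.e., that any point of $\R$ lies in $O_\Phi(1)$ of the intervals $\{f_\w(J):\w\in P_\t\}$ — is exactly the content of Feng–Lau's argument and is where the non-triviality lies; it uses that all these intervals have comparable length $\asymp \t$ and lie in the fixed bounded set $J$, so a volume/packing count bounds the overlap. Everything else is bookkeeping with the self-similarity identity and the Lipschitz comparison between $\mu$ and $\mu_\t$, both of which are standard and already invoked in the excerpt.
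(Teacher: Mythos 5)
Your deduction of the $\mu_\t$ bound from the $\mu$ bound is correct and is exactly what the paper does: the remark following the proposition states that the second assertion follows from the first by the same Lipschitz-comparison argument as in Proposition~\ref{Prop:discrete friendly measures}, and your bump-function computation using $|\mu_\t(\vp)-\mu(\vp)|\ll \t^\g\norm{\vp}_{\mrm{Lip}}$ with $\norm{\vp}_{\mrm{Lip}}\ll r^{-1}$, yielding $r>\t^{\g/(1+s_0)}$, matches that argument.

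However, your sketch of the first assertion (the Feng--Lau bound for $\mu$ itself) contains a genuine error. You claim that any interval $B(x,r)$ with $r\asymp\t$ is covered by \emph{boundedly many} cut-set cylinders $f_\w(J)$, $\w\in P_\t$, and then try to justify this with a volume/packing count using that the intervals have comparable length $\asymp\t$ and all lie in $J$. This is false without a separation condition, and no separation condition is assumed here. When the similarity dimension exceeds $1$, the cut-set $P_\t$ has cardinality $\asymp\t^{-s}$ for some $s>1$, so the total length of the cylinders $\asymp\t^{1-s}\to\infty$ as $\t\to 0$; the overlap multiplicity at some points is therefore unbounded, and a packing count gives nothing. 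Bernoulli convolutions at inverse Pisot ratios exhibit exact overlaps and make this concrete. Relatedly, the exponent you write down, $s_0 := \log(\max_i\mathbf p_i)/\log(\max_i|\l_i|)$, is a similarity-dimension type quantity and \emph{over}-estimates the true Frostman exponent for overlapping systems (singular Bernoulli convolutions, for instance, have Frostman exponent strictly less than $1$, well below $\log 2/\log\phi$), so the inequality $\mu(B(x,r))\ll r^{s_0}$ with that $s_0$ is simply false for such measures.

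The actual Feng--Lau argument never uses bounded overlap. Non-atomicity (equivalently, $K$ infinite) supplies two finite words $\w_1,\w_2$ of some common length $m$ with $f_{\w_1}(J)\cap f_{\w_2}(J)=\emptyset$. For $r$ smaller than half their separation, any ball $B(x,r)$ misses at least one of these two cylinders, so in the expansion $\mu=\sum_{\w\in I^m}\mathbf p_\w \, f_\w\mu$ the ``missing'' term drops out and one obtains the recursion
\begin{align*}
\sup_x\mu(B(x,r))\ \leq\ (1-c_0)\,\sup_y\mu\bigl(B(y,\,r/\l_{\min}^m)\bigr),
\qquad c_0:=\min(\mathbf p_{\w_1},\mathbf p_{\w_2})>0,\ \ \l_{\min}:=\min_i|\l_i|.
\end{align*}
Iterating this $\asymp\log(1/r)$ times gives $\sup_x\mu(B(x,r))\leq(1-c_0)^{\Omega(\log(1/r))}\ll r^{s_0}$ for some $s_0>0$ depending only on $\mu$. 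Since both you and the paper ultimately cite \cite{Feng2009Lau} for the first assertion, the bottom line of your proposal is unaffected; but the explanatory sketch, and in particular the appeal to bounded overlap via a packing count, should be removed or replaced by the disjoint-cylinder iteration above.
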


\begin{remark}
The reference~\cite[Proposition 2.2]{Feng2009Lau} proves the first assertion of Prop.~\ref{prop:mu is Frostman}.
The second assertion concerning the discrete measures $\mu_\t$ follows from the first by the same argument in the proof of Proposition~\ref{Prop:discrete friendly measures}, or from \cite{gorodetski2022h}.
\end{remark}

\subsection{Tsujii's large deviations estimate and its consequences}
In this section, we recall a result of Tsujii that plays a key role in our analysis.
\begin{thm} [{\hspace{.1pt}\cite{Tsujii2015self}}] \label{Theorem Tsujii}
Let $\mu$ be a non-atomic self-similar measure on $\mathbb{R}$. Then for every $\e>0$ there exists some $c_0=c_0(\e,\mu)$ such that for all $t\gg 1$
$$\text{Leb} \left( \left\lbrace \xi\in (-e^t,\,e^t):\,|\hat{\mu}(\xi)|\geq |\xi|^{-c_0} \right\rbrace \right) \leq e^{t\e}.$$
\end{thm}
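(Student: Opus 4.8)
This is a theorem of Tsujii~\cite{Tsujii2015self}; I would prove it by an Erd\Horig{o}s--Kahane type argument, and I describe the plan in the model case that most transparently exhibits the mechanism. Suppose first that the IFS is homogeneous, $\l_i\equiv\l=1/b$ with $b\in\Z_{\geq2}$, put $\Psi(\eta):=\sum_i\mathbf{p}_i\,e(t_i\eta)$, and iterate the stationarity relation $\hat\mu(\xi)=\Psi(\xi)\,\hat\mu(\l\xi)$ to obtain
\[
\log|\hat\mu(\xi)|=\sum_{n\geq0}\log\bigl|\Psi(\l^n\xi)\bigr| .
\]
Since $\mu$ is non-atomic the $t_i$ are not all equal, so $\Psi$ is non-constant; assume in addition that the differences $t_i-t_j$ are rationally dependent (the remaining case is commented on below), so that $|\Psi|$ is periodic with some period $a>0$, its resonance set $D:=\{\eta:|\Psi(\eta)|=1\}$ is the lattice $a\Z$, and $-\log|\Psi(\eta)|\asymp\min\bigl(1,\dist(\eta,D)^2\bigr)$. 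Then $|\hat\mu(\xi)|\geq|\xi|^{-c_0}$ with $|\xi|\leq e^t$ forces $\sum_n\min(1,\dist(\l^n\xi,D)^2)\ll c_0t$, and by Markov's inequality this forces $\dist(\l^n\xi,D)\leq\varrho$ for all but an $O(c_0\varrho^{-2})$-proportion of the $\asymp t$ indices $n$ with $|\l^n\xi|\gtrsim1$.

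It remains to count such $\xi$. Choose $N\asymp t$ so that $\zeta:=\l^N\xi$ lies in a fixed bounded region; then $\dist(\l^n\xi,D)=\dist(b^{N-n}\zeta,D)$, and since $b\in\Z$ the dilation $T\colon x\mapsto bx$ is a well-defined expanding endomorphism of the circle $\R/D$. Thus the bad $\xi$ are the $b^{N}$-dilates of the $\zeta$ whose reduction mod $D$ lies in the set $B_N\subset\R/D$ of points whose first $N$ iterates under $T$ stay within $\varrho$ of $D$ for all but $\delta N$ of the times, where $\delta=O(c_0\varrho^{-2})$. A base-$b$ count gives $\mathrm{Leb}(B_N)\ll b^{-(1-\psi(\delta))N}$ with $\psi(\delta)\to0$ as $\delta\to0$: an orbit staying near $D$ at \emph{every} time confines $\zeta$ to a $\varrho b^{-N}$-neighbourhood of the finite set $b^{-N}D$, and allowing $\delta N$ escapes multiplies the count by at most $b^{\delta N}\binom{N}{\delta N}$. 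Dilating by $b^{N}$ back to the scale $|\xi|\leq e^t\asymp b^{N}$ multiplies this by $b^{N}$, so the bad set of $\xi$ has measure $\ll b^{\psi(\delta)N}\asymp e^{\psi(\delta)t}$. Given $\e$ one fixes $\varrho$, and then $c_0$ small enough that $\psi(\delta)<\e$. (This is essentially sharp: already for $b=2$, with $\mu$ comparable to Lebesgue on an interval, the exceptional set has size $\asymp e^{\e t}$, so no exponentially small bound is possible.)

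I expect the genuinely general case to be the main obstacle. When $b=\l^{-1}$ is not an integer the dilation $x\mapsto\l^{-1}x$ does not descend to the compact quotient $\R/D$, and when the IFS is non-homogeneous the product factorization degenerates into a branching sum $\hat\mu(\xi)=\sum_{\w\in I^N}\mathbf{p}_\w\,e(t_\w\xi)\,\hat\mu(\l_\w\xi)$ with $\l_\w=\prod_j\l_{\w_j}$; one must pass to the cut-sets $P_\t$ of Lemma~\ref{lem:cut set}, so that all contraction factors at a given stage have size $\asymp\t$, and rerun the large-deviations estimate over the resulting tree of orbits against a more robust ($\beta$-expansion, or transfer-operator) model of the dynamics. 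Here Lemma~\ref{lem:cut set}(2) --- which caps the number of distinct contraction ratios at scale $\t$ by $O((-\log\t)^{n+1})$ --- is exactly what keeps the branching from inflating the count past $e^{o(t)}$. Finally, the structure of the resonance set $D$ calls for a case split according to whether the $t_i-t_j$ are rationally dependent ($D$ a lattice, as above) or not ($D=\{0\}$, with $\R/D$ non-compact, to be handled by a separate equidistribution argument), and the low-frequency range $|\xi|=O(1)$ is harmless throughout, contributing only a bounded amount, negligible against $e^{\e t}$.
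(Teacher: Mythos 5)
The paper does not prove this theorem; it is imported verbatim from Tsujii~\cite{Tsujii2015self} and used as a black box (via Corollary~\ref{Coro Tsujii}). So there is no in-paper argument to compare against, and your proposal has to stand on its own as a reconstruction of Tsujii's result.

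Within the model case you restrict to---homogeneous IFS with $\lambda^{-1}=b\in\Z_{\geq 2}$ and rationally dependent translation differences, so that $|\Psi|$ is periodic and $D$ is a lattice---the mechanics are sound: the product formula for $\log|\hat\mu|$, the Markov bound on $\sum_n\min(1,\dist(\l^n\xi,D)^2)$, and the base-$b$ digit count giving $\mathrm{Leb}(B_N)\ll b^{-(1-\psi(\delta))N}$ with $\psi(\delta)\to 0$ all fit together, and the final tuning of $\varrho$ then $c_0$ gives the claimed $e^{\e t}$ bound. (A minor point: your parenthetical ``sharpness'' remark via Lebesgue measure on an interval does not work---there $|\hat\mu(\xi)|\ll|\xi|^{-1}$, so for $c_0<1$ the exceptional set is bounded, not of size $\asymp e^{\e t}$.)

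The gaps you flag are, however, the whole content of the theorem, and they are not merely technical. (i) When $\lambda^{-1}$ is irrational, multiplication by $\lambda^{-1}$ does not descend to $\R/D$, and there is no full shift against which to run the digit count; passing to $\beta$-expansions replaces the full shift by a sofic system and makes the ``$\delta N$ escapes'' count considerably more delicate. (ii) When the $t_i-t_j$ are rationally independent, $D=\{0\}$, and the crucial identification $-\log|\Psi(\eta)|\asymp\min(1,\dist(\eta,D)^2)$ is simply false: $|\Psi|$ is almost periodic with $\limsup_{|\eta|\to\infty}|\Psi(\eta)|=1$, so $-\log|\Psi(\eta)|$ can be arbitrarily small at frequencies arbitrarily far from $D$. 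The near-resonances are governed by Diophantine approximation properties of the $t_i$, and controlling them is precisely the hard part of the Erd\H{o}s--Kahane method. (iii) For a non-homogeneous IFS the stationarity relation is a branching sum rather than an infinite product, and the cut-set device from Lemma~\ref{lem:cut set} helps organize but does not by itself collapse the tree of orbits back to a one-dimensional dynamical system. In short, what you have is a correct proof of a toy case together with an accurate list of obstacles, not a proof of the theorem. Tsujii's actual argument is of a different flavor: it avoids the Erd\H{o}s--Kahane machinery and the attendant Diophantine dichotomy altogether, instead running a large-deviations/submultiplicativity estimate directly on the self-similar tree, using only that non-atomicity of $\mu$ forces the averaged quantity $\int_I|\hat\mu|^2$ over unit intervals $I$ to be bounded strictly away from $1$. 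That is why his bound holds uniformly over all non-atomic self-similar measures, including ones with no pointwise Fourier decay at all.
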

Theorem \ref{Theorem Tsujii} immediately implies the following version of Theorem \ref{main theorem general} for the self-similar measure $\mu$ itself.

\begin{cor} \label{Coro Tsujii}
Let $\mu$ be a non-atomic self-similar measure on $\mathbb{R}$ and let $B(R)$ be the $R$-ball around $0$.  Then,
$$\forall \e>0\quad \exists p\in \mathbb{N}:\, \left \lVert \hat{\mu} \right\rVert_{L^p (B(R))} ^p = O_\e(R^\e).$$
Moreover, for every $\e>0$, there is $\d>0$ so that the set of $\xi\in B(R)$ with $|\hat{\mu}(\xi)|> R^{-\d}$ can be covered with $O_{\e,\mu}(R^\e)$ intervals of length $1$.
\end{cor}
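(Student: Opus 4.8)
The first assertion follows from Theorem \ref{Theorem Tsujii} by a routine layer-cake (dyadic decomposition) argument, which I would run as follows. Fix $\e>0$ and apply Theorem \ref{Theorem Tsujii} with $\e/2$ in place of $\e$ to obtain $c_0 = c_0(\e/2,\mu)>0$. Set $R = e^t$, so $t = \log R$, and split $B(R)$ into the sublevel set $\set{|\hat\mu(\xi)| \le R^{-c_0}}$ and the superlevel set $S:=\set{\xi\in B(R): |\hat\mu(\xi)| > R^{-c_0}}$. On the first set, $\int |\hat\mu|^p \le R^{-c_0 p}\cdot \mathrm{Leb}(B(R)) \ll R^{1-c_0 p}$, which is $O(1)$ once $p \ge 1/c_0$. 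On $S$, we have the trivial bound $|\hat\mu|\le 1$, so $\int_S |\hat\mu|^p \le \mathrm{Leb}(S) \le e^{t\e/2} = R^{\e/2}$ by Tsujii's theorem. Hence $\norm{\hat\mu}_{L^p(B(R))}^p = O_\e(R^\e)$ for any integer $p \ge 1/c_0$, as required; to be safe for the remaining small $R$ one absorbs the finitely-many-scales discrepancy (the "$t\gg 1$" in Tsujii) into the implied constant.

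For the "moreover" part — covering the superlevel set $\set{\xi\in B(R): |\hat\mu(\xi)| > R^{-\d}}$ by $O_{\e,\mu}(R^\e)$ unit intervals — the point is to upgrade the Lebesgue-measure bound to a covering-number bound, and here I would use the Lipschitz regularity of $\hat\mu$. Since $\mu$ is a compactly supported probability measure, $\hat\mu$ is Lipschitz with constant $\ll_\mu 1$: indeed $|\hat\mu(\xi) - \hat\mu(\xi')| \le 2\pi |\xi - \xi'| \int |x|\,d\mu(x) \ll_\mu |\xi-\xi'|$. Apply Theorem \ref{Theorem Tsujii} with $\e/2$ to get $c_0$, and set $\d = c_0/2$. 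If $\xi$ lies in the claimed superlevel set, then for every $\xi'$ within distance $c$ of $\xi$ (with $c$ a small constant depending on $\mu$) we have $|\hat\mu(\xi')| \ge R^{-\d} - \mathrm{Lip}(\hat\mu)\cdot c$; choosing $c$ small and $R$ large enough this is $\ge \tfrac12 R^{-\d} \ge R^{-c_0}$ for $R$ large. Therefore the $c$-neighborhood of $\set{|\hat\mu|>R^{-\d}}$ is contained in $\set{|\hat\mu|\ge R^{-c_0}}$, whose Lebesgue measure is $\le R^{\e/2}$ by Tsujii. A set whose $c$-neighborhood has measure $\le R^{\e/2}$ can be covered by $\ll_c R^{\e/2}$ intervals of length $c$, hence by $\ll_{\e,\mu} R^{\e/2} = O_{\e,\mu}(R^\e)$ intervals of length $1$. (Concretely: chop $B(R)$ into $\ll R/c$ intervals of length $c$; each one meeting the superlevel set contributes measure $\ge$ const to the $c$-fattened set, so there are $\ll_c R^{\e/2}$ such intervals, and each is covered by one unit interval.)

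The only mildly delicate point — and the one place where I'd be careful rather than cavalier — is the interplay of the thresholds: one must pick $\d$ strictly smaller than the Tsujii exponent $c_0$ so that the Lipschitz perturbation term $\mathrm{Lip}(\hat\mu)\cdot c$ can be absorbed while still landing inside the Tsujii superlevel set $\set{|\hat\mu|\ge R^{-c_0}}$, and one must handle small $R$ (where "$t\gg1$" fails) by inflating the implied constant. Neither is a real obstacle; the substance of the statement is entirely contained in Theorem \ref{Theorem Tsujii}, and the corollary is a soft consequence via dyadic decomposition plus the elementary Lipschitz bound on $\hat\mu$.
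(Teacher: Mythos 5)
There is a genuine gap, and it appears in both halves: you apply Tsujii's bound to the wrong set. Theorem~\ref{Theorem Tsujii} controls the Lebesgue measure of $\set{\xi\in B(R):|\hat\mu(\xi)|\geq|\xi|^{-c_0}}$ — with a \emph{$\xi$-dependent} threshold. Since $|\xi|\leq R$ implies $|\xi|^{-c_0}\geq R^{-c_0}$, your set $S=\set{\xi\in B(R):|\hat\mu(\xi)|>R^{-c_0}}$ \emph{contains} the Tsujii superlevel set rather than being contained in it, so the claim $\mathrm{Leb}(S)\leq R^{\e/2}$ ``by Tsujii's theorem'' has the containment backwards. In fact Tsujii gives no information at all about $\set{|\hat\mu|>R^{-c_0}}$ in the range $|\xi|\ll R$: for $|\xi|\in[1,R^{1/2}]$, say, the Tsujii threshold $|\xi|^{-c_0}\geq R^{-c_0/2}$ is far above $R^{-c_0}$, so almost any behavior of $\hat\mu$ there is consistent with the theorem, yet could put a set of measure $\sim R^{1/2}$ into your $S$. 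The paper sidesteps this by keeping the $\xi$-dependent split: on $\set{|\hat\mu|<|\xi|^{-c_0}}$ it uses the pointwise bound $|\hat\mu(\xi)|^p\leq\min(1,|\xi|^{-c_0p})$ and integrates (this is $O(1)$ once $c_0p>1$, after noting $|\hat\mu|\leq 1$ near the origin), while on the genuine Tsujii superlevel set it uses $|\hat\mu|\leq1$ together with the measure bound. If you insist on a uniform threshold $R^{-\d}$, you need a dyadic-annuli argument and $\d\leq\e c_0$, not $\d=c_0$, plus separate treatment of $|\xi|\lesssim R^{\d/c_0}$.

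The ``moreover'' part has this same threshold issue and an additional one in the Lipschitz step. You want $R^{-\d}-\mathrm{Lip}(\hat\mu)\cdot c\geq\tfrac12 R^{-\d}$ with $c$ a ``small constant depending on $\mu$,'' but $\mathrm{Lip}(\hat\mu)\cdot c$ is a fixed positive number while $R^{-\d}\to0$, so the inequality fails for all large $R$. The stability radius one actually gets from Lipschitz continuity is $\eta\asymp R^{-\d}$, not a constant; working at that scale the covering count becomes $\mathrm{Leb}(\text{Tsujii set})/\eta\asymp R^{\e/2+\d}$, not $R^{\e/2}$, so $\d$ must be taken small (at least $\d\leq\e/2$) rather than $\d=c_0/2$, and the low-frequency range must again be split off (contributing $O(R^{\d/c_0})$ unit intervals). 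So the right choice is $\d\ll\min(\e,\e c_0)$. The paper defers exactly these details to the cited~\cite[Corollary~2.5]{Algom2023Wu}; as written, your version of the argument does not close.
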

\begin{proof}
Let $\e>0$ and let $p\in \mathbb{N}$ to be chosen later. Applying Theorem \ref{Theorem Tsujii} with $t=\log R$ and letting $c_0$ be as in the Theorem, we see that
$$\int_{-R} ^R |\hat{\mu}(\xi)|^p\, d\xi \leq \int_{ \lbrace \xi\in B(R):\, |\hat{\mu}(\xi)|< |\xi|^{-c_0} \rbrace} |\hat{\mu}(\xi)|^p\, d\xi+R^\e \leq \int_{-R} ^R |\xi|^{-c_0\cdot p}\, d\xi+R^\e = O(R^{-c_0\cdot p+1})+R^\e.$$
Taking $p\gg 1$ in a manner that depends on $c_0$, we obtain the first conclusion.
The second conclusion follows from Theorem~\ref{Theorem Tsujii} using the fact that $\hat{\mu}$ is Lipschitz and hence is slowly varying on small intervals; cf.~\cite[Corollary 2.5]{Algom2023Wu} for a detailed proof.
\end{proof}

\subsection{Self-similar sets and measures on the moment curve}
A key observation in our analysis is that if we are push a  self-similar set (resp. measure) to the moment curve $(x,x^2,...,x^\ell)$ then the resulting set is, in fact, a self-affine set (resp. measure).  The proof is based upon a construction of Feng and K{\"a}enm{\"a}ki   \cite[Lemma 3.1]{feng2018self}:

\begin{lem} \label{Lemma self affine}
Suppose $K_\Phi$ is a self-similar set generated by the IFS $\Phi = \lbrace f_i(x)=\lambda_i\cdot x+t_i\rbrace_{i=1} ^n$ on $[0,1]$. Let $P_\ell(x)=(x,x^2,...,x^\ell)$. Then $P_\ell(K)$ is the attractor of the self affine IFS
$$\Psi:= \left\lbrace F_i(x) = A_i\cdot x -A_i\left(-\frac{t_i}{\lambda_i},\,\left( -\frac{t_i}{\lambda_i} \right)^2,\dots,\,\left( -\frac{t_i}{\lambda_i} \right)^\ell \right) \right \rbrace,$$
where
$$A_i = \begin{pmatrix}
            \l_i c_{i,1,1} & 0 & 0 & \dots & 0 \\ \l_i ^2 c_{i,2,1} &\l_i ^2 c_{i,2,2} & 0 & \dots & 0\\ \l_i ^3 c_{i,3,1} & \l_i ^3 c_{i,3,2} & \l_i ^3 c_{i,3,3} & \dots &0\\ \vdots & \vdots & \vdots & \ddots & \vdots \\ \l_i ^\ell c_{i,\ell,1} & \l_i ^\ell c_{i,\ell,2}& \l_i ^n c_{i,\ell,3}& \dots & \l_i ^\ell c_{i,\ell,\ell}
        \end{pmatrix},$$
        $$ \text{ and } c_{i,k,j}:=\binom{k}{j}\cdot \left(\frac{t_i}{\l_i}\right)^{k-j} \text{ for }  (i,k,j)\in \lbrace 1,...,n \rbrace \times \lbrace1,...,\ell \rbrace^2.$$
\end{lem}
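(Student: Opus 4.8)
The plan is to verify the claimed intertwining relation $P_\ell \circ f_i = F_i \circ P_\ell$ directly, after which the statement about attractors follows from uniqueness of the attractor of an IFS. Since $K_\Phi = \bigcup_i f_i(K_\Phi)$, applying $P_\ell$ gives $P_\ell(K_\Phi) = \bigcup_i P_\ell(f_i(K_\Phi)) = \bigcup_i F_i(P_\ell(K_\Phi))$ once the intertwining is known; and because each $F_i$ is an affine contraction (as $\norm{A_i}$ can be made $<1$, all $\l_i$ being $<1$ in modulus — strictly speaking one may need to pass to a high iterate $\Phi^m$ to guarantee $\norm{A_i}<1$, but this does not affect the attractor), the nonempty compact set $P_\ell(K_\Phi)$ is \emph{the} attractor of $\Psi$.

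So the crux is the pointwise identity. First I would fix $i$ and write $f_i(x) = \l_i x + t_i$. The $k$-th coordinate of $P_\ell(f_i(x))$ is $(\l_i x + t_i)^k$. The binomial theorem gives
\begin{align*}
    (\l_i x + t_i)^k = \l_i^k\left(x + \tfrac{t_i}{\l_i}\right)^k = \l_i^k \sum_{j=0}^{k} \binom{k}{j}\left(\tfrac{t_i}{\l_i}\right)^{k-j} x^j = \sum_{j=1}^{k} \l_i^k \binom{k}{j}\left(\tfrac{t_i}{\l_i}\right)^{k-j} x^j + \l_i^k\left(\tfrac{t_i}{\l_i}\right)^{k}.
\end{align*}
Recognizing $\l_i^k\binom{k}{j}(t_i/\l_i)^{k-j}$ as precisely $\l_i^k c_{i,k,j}$, the $j=1,\dots,k$ part of this sum is exactly the $k$-th row of $A_i$ applied to $P_\ell(x) = (x,x^2,\dots,x^\ell)$, and since row $k$ of $A_i$ has zero entries in columns $j>k$, the matrix product $A_i P_\ell(x)$ has $k$-th coordinate $\sum_{j=1}^k \l_i^k c_{i,k,j} x^j$. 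It remains to check the constant term: $\l_i^k (t_i/\l_i)^k = (t_i/\l_i \cdot \l_i)^k$, wait — more carefully, $\l_i^k(t_i/\l_i)^k = t_i^k$; one then checks this equals the $k$-th coordinate of $-A_i\left(-\tfrac{t_i}{\l_i}, (-\tfrac{t_i}{\l_i})^2,\dots,(-\tfrac{t_i}{\l_i})^\ell\right)$, i.e. that $A_i$ applied to the point $\left((-t_i/\l_i)^j\right)_{j=1}^\ell$ yields $\left(-t_i^k\right)_{k=1}^\ell$. This is the same binomial computation run at $x = -t_i/\l_i$: indeed $P_\ell(f_i(-t_i/\l_i)) = P_\ell(0) = \mathbf{0}$, so from $A_i P_\ell(x) + (\text{constant vector}) = P_\ell(f_i(x))$ evaluated at $x=-t_i/\l_i$ we read off that the constant vector must be $-A_i P_\ell(-t_i/\l_i) = -A_i\left((-t_i/\l_i)^j\right)_j$, which is exactly the translation part written in $\Psi$. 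So the identity is self-consistent and I would simply present the binomial expansion once and note the constant term is forced by the value at the fixed point $-t_i/\l_i$ of $f_i$.

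I do not expect a serious obstacle here; the only points requiring a word of care are (i) observing that $A_i$ is lower-triangular with diagonal entries $\l_i^k$ (up to the factor $\binom{k}{k}=1$), hence invertible and with norm controllable — and if one wants genuine contractions with $\norm{A_i}<1$ in the chosen norm, replace $\Phi$ by an iterate $\Phi^m$ for $m$ large, which has the same attractor, the same moment-curve image, and whose associated matrices are products of the $A_i$'s hence have norm $\to 0$; and (ii) the bookkeeping that the stated matrix $A_i$ has a typo-looking entry $\l_i^n c_{i,\ell,3}$ in the last row which should read $\l_i^\ell c_{i,\ell,3}$ — I would silently use the correct exponent $\ell$ throughout, matching the general formula $\l_i^k c_{i,k,j}$ for the $(k,j)$ entry. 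Beyond that the proof is a direct unwinding of the binomial theorem together with the uniqueness of IFS attractors.
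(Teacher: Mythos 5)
Your proof is correct and follows essentially the same route as the paper: both establish the intertwining $F_i\circ P_\ell = P_\ell\circ f_i$ via the binomial theorem and then conclude by uniqueness of the attractor. The only cosmetic difference is that the paper verifies the affine identity starting from the shifted expansion $\left(x-\left(-\tfrac{t_i}{\l_i}\right)\right)^k = \sum_{j=1}^k c_{i,k,j}\left(x^j - \left(-\tfrac{t_i}{\l_i}\right)^j\right)$, which bundles the linear part and the constant term together, whereas you expand $(\l_i x + t_i)^k$ directly and then deduce the translation vector from evaluating at the fixed point $-t_i/\l_i$ of $f_i$; this is a clean equivalent, and you are also right that the exponent $\l_i^n$ in the last row of the displayed matrix is a typo for $\l_i^\ell$, and that one may need to pass to an iterate $\Phi^m$ to guarantee $\norm{A_i}<1$ (a point the paper relegates to the remark following the lemma rather than the proof itself).
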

We remark that a direct computation shows  that if $\max_{i} |\lambda_i|<\frac{1}{2^{2\ell} \sqrt{\ell}}$ then $||T_i||<1$ for all $i$, whence the IFS $\Psi$ will be uniformly contracting. Clearly we can induce the original IFS to achieve this (assuming as we may that $\ell$ is given). So, in application, we can assume without the loss of generality that $\Psi$ is a uniformly contracting IFS.
\begin{proof}
Let $x\in \mathbb{R}$. Then for every $1\leq k \leq \ell$ and $1\leq i \leq n$ we have
$$\left(x-\left(-\frac{t_i}{\l_i}\right)\right)^k = \sum_{j=1} ^k c_{i,k,j} \left( x^j-\left(-\frac{t_i}{\l_i}\right)^j\right).$$
It follows that for every $1 \leq i \leq n$ and $x\in \mathbb{R}$ we have
\begin{eqnarray*}
F_i(x,x^2,\dots,x^\ell) &=& T_i \left( x-\left(-\frac{t_i}{\l_i}\right),\, x^2-\left(-\frac{t_i}{\l_i}\right)^2,\dots,\, x^\ell-\left(-\frac{t_i}{\l_i}\right)^\ell \right)     \\
&=&  \left( \l \left( x-\left(-\frac{t_i}{\l_i}\right)\right),\, \l^2 \left( x^2-\left(-\frac{t_i}{\l_i}\right) \right)^2,\dots,\, \l^\ell \left( x-\left(-\frac{t_i}{\l_i}\right) \right)^\ell \right) \\
&=&  \left( \l_i\cdot x+t_i,\,  \left( \l_i\cdot x+t_i \right)^2,\dots,\,  \left( \l_i\cdot x+t_i \right)^\ell \right).
\end{eqnarray*}
It follows that, writing $\tilde{K}=P_\ell(K)$, for every $i\in \lbrace 1,...,n\rbrace$, we have
$$T_i(\tilde{K}) = \left\lbrace \left(f_i(x),\ f_i ^2 (x),\dots, f_i ^\ell (x)\right): x\in K \right\rbrace$$
Therefore,
$$\bigcup_{i=1} ^n F_i(\tilde{K}) = \bigcup_{i=1} ^n \left\lbrace \left( f_i(x),\,f_i^2(x),\,\dots,f_i ^\ell(x)\right):\,x\in K \right\rbrace = \lbrace P_\ell(x): x\in K \rbrace = P_\ell(K)=\tilde{K}.$$
The proof is complete.
\end{proof}

\begin{cor} \label{cor self affine measure}
Let $\mu \in \mathcal{P}([0,1])$ be a self-similar measure with respect to the  self-similar IFS $\Phi = \lbrace f_i(x)=\lambda_i\cdot x+t_i\rbrace_{i=1} ^n$ and the probability vector $\mathbf{p}$. Then $\nu=P_\ell\mu$ is a self-affine measure with respect to the IFS $\Psi$ defined in Lemma \ref{Lemma self affine}, and the same probability vector $\mathbf{p}$.
\end{cor}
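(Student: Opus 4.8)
The plan is to deduce the statement from the \emph{conjugacy relation} between $\Phi$ and $\Psi$ that is already implicit in the proof of Lemma \ref{Lemma self affine}, and then to push forward the stationarity equation defining $\mu$. The point is that $P_\ell$ intertwines $f_i$ with $F_i$.

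\textbf{Step 1: Extract the conjugacy.} The displayed computation in the proof of Lemma \ref{Lemma self affine} shows exactly that, for every $x\in\R$ and every $i\in\{1,\dots,n\}$,
\[
F_i\bigl(P_\ell(x)\bigr) = F_i(x,x^2,\dots,x^\ell) = \bigl(f_i(x),\,f_i(x)^2,\dots,f_i(x)^\ell\bigr) = P_\ell\bigl(f_i(x)\bigr).
\]
Thus $F_i\circ P_\ell = P_\ell\circ f_i$ as maps $\R\to\R^\ell$, for each $i$.

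\textbf{Step 2: Push forward the stationarity equation.} Starting from $\mu=\sum_{i=1}^n \mathbf{p}_i\, f_i\mu$ and applying $P_\ell$, and using linearity of the pushforward together with the identity $P_\ell(f_i\mu) = (P_\ell\circ f_i)\mu = (F_i\circ P_\ell)\mu = F_i(P_\ell\mu)$ from Step 1, one obtains
\[
\nu = P_\ell\mu = \sum_{i=1}^n \mathbf{p}_i\, P_\ell(f_i\mu) = \sum_{i=1}^n \mathbf{p}_i\, F_i(P_\ell\mu) = \sum_{i=1}^n \mathbf{p}_i\, F_i\nu .
\]
So $\nu$ is a Borel probability measure satisfying the stationarity equation for the weighted IFS $(\Psi,\mathbf{p})$.

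\textbf{Step 3: Invoke uniqueness.} By the remark following Lemma \ref{Lemma self affine}, after inducing $\Phi$ if necessary we may assume $\Psi$ is a uniformly contracting affine IFS; such an IFS, together with the probability vector $\mathbf{p}$, admits a \emph{unique} stationary Borel probability measure, namely the self-affine measure associated to $(\Psi,\mathbf{p})$. Since $\nu$ is a Borel probability measure (being the pushforward of one) satisfying this equation, it coincides with that self-affine measure; consistently, $\supp(\nu)=P_\ell(\supp\mu)\subseteq P_\ell(K)=K_\Psi$ by Lemma \ref{Lemma self affine}. This completes the proof. There is no genuine obstacle here: the content is entirely in Lemma \ref{Lemma self affine}, and the only things to be careful about are recording the conjugacy cleanly and noting that uniqueness of the self-affine measure requires the contraction hypothesis on $\Psi$, which is arranged by inducing the original IFS.
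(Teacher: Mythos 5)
Your proposal is correct and is essentially the same approach the paper has in mind: the paper's proof of the corollary is simply the one-line assertion that it is "a direct consequence of the previous Lemma and its proof," and your Steps 1--3 spell out precisely what that means, namely extracting the conjugacy $F_i\circ P_\ell = P_\ell\circ f_i$ from the displayed computation in the proof of Lemma \ref{Lemma self affine}, pushing the stationarity equation forward, and invoking uniqueness after inducing to make $\Psi$ contracting.
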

\begin{proof}
This is a direct consequence of the previous Lemma and its proof.
\end{proof}

\subsection{Basic properties of analytic maps}

The following lemma is a consequence of finiteness of the number of zeros of non-identically vanishing analytic maps in a compact interval, combined with upper Frostman regularity of self-similar measures.

\begin{lem}\label{lem:good intervals higher dimensions}
        Let $g:U\r \R^{d-1} $ be an analytic map defined on an open neighborhood $U$ of $[0,1]$, and so that its graph is not contained in a proper affine subspace of $\R^d$.
        Let $G(x)$ denote the $((d-1)\times (d-1))$-matrix $[g^{(2)}(x) g^{(3)}(x) \cdots g^{(d)}(x)]$.
        Then, for all sufficiently small $\d>0$, depending on $g$, there exists a set $E = E(\d)\subseteq [0,1]$ such that:
\begin{enumerate}
    \item $E$ is a union of $O_g(1)$-many intervals.

    \item There exist $c_1\geq 1$, depending only on $g$, such that 
    $$\min \set{|\det(G(x) )|: x\in [0,1]\setminus E} \gg_{g} \d^{c_1} ,$$ 
    where $\det$ denotes the matrix determinant.
\end{enumerate}

\end{lem}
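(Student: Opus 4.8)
The plan is to reduce the entire statement to the single fact that the real-analytic function $h(x):=\det G(x)$ is \emph{not} identically zero on the interval $U$ (which, shrinking $U$, we may assume to be a connected open interval containing $[0,1]$). Granting that, parts (1) and (2) are just the routine assertion that an analytic function which does not vanish identically on a compact interval is bounded below off small neighbourhoods of its finitely many zeros; upper Frostman regularity is not needed for the statement itself, only for the way it is later applied.

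The substantive point — and the only place analyticity of $g$ is essential — is showing $h\not\equiv 0$. Here I would argue as follows. Writing $Q(x)=(x,g(x))$ we have $Q^{(1)}(x)=(1,g'(x))$ and $Q^{(k)}(x)=(0,g^{(k)}(x))$ for $k\ge 2$, so expanding the determinant of $[Q^{(1)}(x)\,Q^{(2)}(x)\cdots Q^{(d)}(x)]$ along its first row gives $\det[Q^{(1)}(x)\cdots Q^{(d)}(x)]=\det G(x)=h(x)$. Since $Q^{(k)}=R^{(k-1)}$ for $R:=Q'=(1,g_1',\dots,g_{d-1}')$, the function $h$ is, up to transposing the matrix, the Wronskian of the $d$ real-analytic functions $1,g_1',\dots,g_{d-1}'$. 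By the classical fact that in the real-analytic category the Wronskian of finitely many functions on a connected interval vanishes identically precisely when the functions are $\R$-linearly dependent (Peano's $C^k$ counterexamples notwithstanding), $h\equiv 0$ would produce a nonzero vector $(c_0,\dots,c_{d-1})$ with $c_0+\sum_{j=1}^{d-1}c_jg_j'\equiv 0$; integrating, $c_0x+\sum_{j=1}^{d-1}c_jg_j(x)$ is constant, i.e.\ the graph of $g$ lies in the proper affine hyperplane $\{y\in\R^d:\ c_0y_0+\sum_{j\ge1}c_jy_j=\mathrm{const}\}$, contradicting the hypothesis. The reverse implication is trivial, which also yields the equivalence asserted after \eqref{eq:nondegenerate}. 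I expect this Wronskian step to be the crux; a self-contained alternative is to take the generic rank $r<d$ of $[Q^{(1)}(x)\cdots Q^{(d)}(x)]$ and, on a subinterval where the rank is attained by a fixed set of $r$ columns, check that the wedge of those columns satisfies a scalar linear ODE, hence spans a fixed $r$-dimensional subspace that must contain every $Q^{(k)}(x)$, forcing the graph into a proper subspace.

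With $h\not\equiv 0$ established, let $z_1<\dots<z_N$ be its zeros in $[0,1]$, which are finite in number, $N=N(g)=O_g(1)$, with orders $m_1,\dots,m_N\ge 1$, and near each $z_j$ write $h(x)=(x-z_j)^{m_j}u_j(x)$ with $u_j$ analytic and $u_j(z_j)\neq 0$. I would fix $\delta_0=\delta_0(g)>0$ small enough that the intervals $(z_j-\delta_0,z_j+\delta_0)$ are pairwise disjoint and contained in $U$, that $|u_j|\ge\frac12|u_j(z_j)|$ on each of them, and that $|h|\ge c_0(g)>0$ on the compact complement of their union in $[0,1]$. For $0<\delta<\delta_0$ set $E(\delta):=[0,1]\cap\bigcup_{j=1}^N(z_j-\delta,\ z_j+\delta)$; this is a union of at most $N=O_g(1)$ intervals, each of length $\le 2\delta$, which gives (1). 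Setting $c_1:=\max_j m_j\ge 1$, for $x\in[0,1]\setminus E(\delta)$ either $x$ lies outside all the intervals $(z_j-\delta_0,z_j+\delta_0)$, so $|h(x)|\ge c_0(g)\ge c_0(g)\,\delta^{c_1}$, or $\delta\le|x-z_j|<\delta_0$ for the nearest $z_j$, so $|h(x)|=|x-z_j|^{m_j}|u_j(x)|\ge\frac12|u_j(z_j)|\,\delta^{m_j}\ge\frac12|u_j(z_j)|\,\delta^{c_1}$ (using $m_j\le c_1$ and $\delta<1$); in either case $|h(x)|\gg_g\delta^{c_1}$, which is (2).
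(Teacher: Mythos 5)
Your proof is correct and follows essentially the same route as the paper: both identify $\det G(x)$ with the Wronskian of $1,g_1',\dots,g_{d-1}'$, invoke the fact that for real-analytic functions identical vanishing of the Wronskian is equivalent to linear dependence (the paper cites a reference where you supply the integration argument), and then use finiteness of zeros of a non-trivial analytic function to get the $\delta$-neighborhood set $E$ and the bound $\gg\delta^{c_1}$ with $c_1=\max_j m_j$ (which the paper leaves as a ``standard consequence''). You simply spell out the two steps the paper treats as known.
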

\begin{proof}
    Since $x\mapsto D(x)\stackrel{\mrm{def}}{=}\det(G(x))$ is analytic, it either vanishes identically, or has finitely many zeros in $[0,1]$.
    Moreover, it is known (cf.~\cite{WronskianLinearDependenceAnalytic}) that identical vanishing of $D(x)$ is equivalent to the graph of $g$ being contained in a proper affine subspace, and thus, $D$ can only vanish at finitely many points.
    Let $Z=\set{ x\in [0,1]: D(x)=0}$ be this finite set.
Let $Z^{(\d)}$ be the $\d$-neighborhood of $Z$. Then, $Z^{(\d)}$ is a union of $O_g(1)$ intervals for all small enough $\d$.
Here, the implicit constant depends only on (the number and order of vanishing of zeros of) $D$. Finally, it is a standard consequence of the properties of zeros of real analytic functions that there exists some $c_1= c_1(g)\geq 1$ such that 
$$\min \set{ |D(y)|: y\in [0,1]\setminus Z^{(\d)}   } \gg_{g} \d^{c_1} .$$
This completes the proof by taking $E = Z^{(\d)}$.
\qedhere
\end{proof}

\subsection{Moment sums vs Fourier transforms}

Recall the definition of the dyadic partitions $\Dcal_m$ and moment sums given in the introduction.
For $x\in \R^d$, we denote by $\Dcal_m(x)$ the unique element of $\Dcal_m$ containing $x$. 

The following lemma is a consequence of Plancherel's theorem, and relates Fourier analytic properties of measures to moment sums of their discretizations.
\begin{lem}
\label{lem:Fourier vs moment sum}
    Let $\mu$ be a compactly supported Borel probability measure on $\R^d$.
    Then, for every $R>1$, letting $m = [\log_2 R]\in \N$ be the integer part of $\log_2 R$, we have
    \begin{align*}
         s_m(\mu,2)  
        \asymp 2^{-dm} \norm{\hat{\mu}}^2_{L^2(B(R))},
    \end{align*}
    where $B(R)$ denotes the $R$-ball around the origin.
\end{lem}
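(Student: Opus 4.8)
The plan is to exploit the standard fact that the $L^2$-mass of $\hat\mu$ on a ball of radius $\sim 2^m$ is comparable to the $\ell^2$-mass of the Fourier coefficients of $\mu$ at integer frequencies after rescaling by $2^m$, and that the latter equals, up to constants, the sum of squares of the masses $\mu(Q)$ over dyadic cubes $Q\in\Dcal_m$. Concretely, first I would rescale: put $\mu_m = S_m\mu$ where $S_m(x) = 2^m x$, so that the cubes of $\Dcal_m$ become unit cubes, $s_m(\mu,2) = \sum_{Q'\in\Dcal_0}\mu_m(Q')^2$, and $\hat{\mu_m}(\xi) = \hat\mu(2^{-m}\xi)$; thus $\norm{\hat\mu}_{L^2(B(R))}^2 = 2^{dm}\norm{\hat{\mu_m}}_{L^2(B(2^{-m}R))}^2$, and since $R/2 \le 2^m \le R$ it suffices to prove $\sum_{Q'\in\Dcal_0}\mu_m(Q')^2 \asymp \norm{\hat{\mu_m}}_{L^2(B(c))}^2$ for a fixed constant $c\in[1/2,1]$, uniformly in $m$ (the support of $\mu_m$ has diameter $O(2^m)$, but only the distribution of mass among unit cubes matters).

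The core step is a local Plancherel/Poisson-type estimate. For a probability measure $\rho$ on $\R^d$ and a fixed cube $Q'\in\Dcal_0$, consider the function obtained by periodizing $\rho$ restricted near $Q'$; more cleanly, fix a smooth bump $\chi$ supported on $[-1,2]^d$ with $\chi\equiv 1$ on $[0,1]^d$ and form, for each lattice point $k\in\Z^d$, the translate $\chi_k(x)=\chi(x-k)$. Then $\widehat{\chi_k\,d\rho}(\xi) = e(-k\cdot\xi)\,\widehat{\chi\,d\rho(\cdot+k)}(\xi)$, and by Plancherel on the torus (applied to the $1$-periodization of $\chi_k\,d\rho$, which is a finite measure of total mass $\le 3^d$) one gets
\begin{align*}
\sum_{n\in\Z^d}\bigl|\widehat{\chi_k\,d\rho}(n)\bigr|^2 \asymp \int_{[0,1]^d}\Bigl|\sum_{j\in\Z^d}(\chi_k\,d\rho)(\cdot+j)\Bigr|^2\,dx,
\end{align*}
which is bounded below by $\rho(Q'_k)^2$ (the $n=0$ term alone gives $\rho(Q'_k)^2$) and bounded above by $O(1)\sum_{Q'\text{ near }Q'_k}\rho(Q')^2$. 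Summing over a set of lattice points $k$ whose cubes tile $\R^d$, and using that the Fourier transform of $\rho$ is Lipschitz with constant $O(\diam\supp\rho)$ — so $|\hat\rho(\xi)|$ is comparable to its value at the nearest integer point over unit balls only after a further mollification — one is led to compare $\sum_{n\in\Z^d\cap B(c')}|\hat\rho(n)|^2$ with $\norm{\hat\rho}_{L^2(B(c))}^2$; this comparison is where one uses Bernstein/Nikolskii for band-limited-type behavior, or more elementarily the fact that $\hat\rho$ varies slowly on unit scales relative to its decay only if $\rho$ is compactly supported — which it is, but with large diameter, so one must instead work directly with $\mu_m$ and note that the relevant bump functions localize the computation to unit scale regardless of the diameter.

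The cleanest route, and the one I would actually write, avoids the diameter issue entirely: apply Plancherel directly to $\widehat{\varphi\mu_m}$ where $\varphi$ is a fixed Schwartz function with $\hat\varphi$ compactly supported in $B(c)$ and $\hat\varphi\ge 1$ on $B(c/2)$; then $\norm{\hat{\mu_m}}_{L^2(B(c/2))}^2 \le \norm{\hat\varphi\,\hat{\mu_m}}_{L^2}^2 = \norm{\widehat{\varphi*\mu_m}}_{L^2}^2 = \norm{\varphi*\mu_m}_{L^2}^2$, and since $\varphi*\mu_m$ is an $L^1$-normalized average of unit-scale bumps placed at the points of $\mu_m$, its $L^2$-norm squared is $\asymp \sum_{Q'\in\Dcal_0}\mu_m(Q')^2$ by splitting the convolution according to which unit cube each point of $\mu_m$ lies in and using the rapid decay of $\varphi$ to control cross terms; the reverse inequality $\norm{\hat{\mu_m}}_{L^2(B(c))}^2 \gtrsim \sum_{Q'}\mu_m(Q')^2$ follows the same way using a $\varphi$ with $\hat\varphi$ supported in $B(c)$ and $\hat\varphi \ge 1$ on $B(c')$, plus the trivial lower bound $\norm{\hat{\mu_m}}_{L^2(B(c))}^2 \ge \norm{\hat\varphi\,\hat{\mu_m}}_{L^2}^2/\norm{\hat\varphi}_\infty^2$. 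Undoing the rescaling then gives the claim. \textbf{Main obstacle:} the only delicate point is controlling the cross terms $\sum_{Q'\ne Q''}\sum_{x\in Q',\,y\in Q''}\langle\varphi(\cdot-x),\varphi(\cdot-y)\rangle\,d\mu_m(x)\,d\mu_m(y)$ in the $L^2$-norm of $\varphi*\mu_m$; these are dominated by the diagonal using the Schwartz decay $|\langle\varphi(\cdot-x),\varphi(\cdot-y)\rangle| \ll_N (1+|x-y|)^{-N}$ together with Cauchy–Schwarz (or Schur's test) on the resulting almost-diagonal Gram matrix indexed by cubes, which absorbs everything into $O(1)\sum_{Q'}\mu_m(Q')^2$.
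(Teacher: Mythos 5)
Your ``cleanest route'' is essentially the paper's strategy (Schwartz test function $\varphi$, Plancherel, compare $\norm{\varphi*\mu_m}_{L^2}^2$ to the moment sum), and for the direction $\norm{\hat{\mu}}^2_{L^2(B(R))} \ll 2^{dm} s_m(\mu,2)$ your Schur-test control of the Gram matrix $\Phi(y-z)=\int\varphi(x-y)\overline{\varphi(x-z)}\,dx$ is sound: the rapid decay of $\Phi$ bounds off-diagonal contributions by $O(1)\sum_{Q'}\mu_m(Q')^2$, giving $\norm{\varphi*\mu_m}_{L^2}^2\ll\sum_{Q'}\mu_m(Q')^2$. (This is in fact the direction the paper dispatches by citation, so you have a self-contained argument for it.)

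The gap is in the other direction, $s_m(\mu,2)\ll 2^{-dm}\norm{\hat\mu}^2_{L^2(B(R))}$, which in your normalization is the lower bound $\norm{\varphi*\mu_m}_{L^2}^2\gg\sum_{Q'}\mu_m(Q')^2$. Here ``dominated by the diagonal via Schur'' does not work: $\Phi$ is not nonnegative, the off-diagonal terms can be negative, and Schur only bounds them by $C\sum_{Q'}\mu_m(Q')^2$ for some fixed $C$ that need not be smaller than the diagonal constant $\min_{|w|\leq\sqrt d}\Phi(w)$ (which need not even be positive). Concretely, $\norm{\varphi*\mu_m}^2_{L^2}=\int|\hat\varphi|^2|\hat{\mu_m}|^2$ can be small even when $\sum_{Q'}\mu_m(Q')^2$ is not, unless you impose positivity in physical space. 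The fix is what the paper actually uses: choose $\varphi\geq 0$ with $\varphi\geq 1$ on a ball of radius $\gg\sqrt d$, together with $\supp\hat\varphi$ compact. Then one has the \emph{pointwise} bound $\varphi*\mu_m(x)\geq\mu_m(B(x,1))$, whence $\norm{\varphi*\mu_m}_{L^2}^2\geq\int\mu_m(B(x,1))^2\,dx\gg\sum_{Q'}\mu_m(Q')^2$ directly, with no cross-term bookkeeping; Plancherel and $\supp\hat\varphi\subset B(c)$ then give the desired inequality. Your ``reverse inequality ... follows the same way'' statement quietly assumes this positivity, but your stated hypotheses on $\varphi$ (only: $\hat\varphi$ compactly supported, $\hat\varphi\geq 1$ on a ball) do not guarantee $\varphi\geq 0$, so the step is not justified as written. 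A minor further slip: after rescaling by $2^m$ the radius $2^{-m}R$ lies in $[1,2]$, not $[1/2,1]$; this is harmless but worth fixing.
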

\begin{proof}
    The proof follows similar lines to~\cite[Proof of Claim 2.8]{FengNguyenWang}, where the inequality $$2^{-dm}\norm{\hat{\mu}}^2_{L^2(B(R))}
        \ll s_m(\mu,2)  $$ was essentially proved for $d=1$; cf.~\cite[Eq. (6.28) and (6.29)]{khalil2023exponential}.
    It can also be deduced by a very similar argument to the one we give below for the reverse inequality.

    Let $\vp$ be a Schwartz function on $\R^d$ satisfying $\vp \geq 1$ on the unit ball $B(1)$, $\hat{\vp} \geq 0$, and $\supp(\hat{\vp}) \subset B(1)$; cf.~\cite[Example 3.2]{Mattila2015new} for a construction of such function.
    For $R>1$, let $\vp_R(x) = R^d \vp(Rx)$.
    Then, $\widehat{\vp_R}(\xi) = \hat{\vp}(\xi/R)$.
    
    Note that $\widehat{\vp_R} \ll_\vp 1$ on $\supp(\widehat{\vp_R})\subset B(R)$.
    Hence, by Plancherel's formula, we have
    \begin{align*}
        \int_{\R^d} |\vp_R \ast \mu|^2 \,dx \simeq
        \int_{\R^d} |\widehat{\vp_R \ast \mu}|^2 \,d\xi 
        = \int_{\R^d} |\widehat{\vp_R}|^2  |\hat{\mu}|^2 \,d\xi 
        \ll \int_{B(R)} |\hat{\mu}|^2\;d\xi.
    \end{align*}
    On the other hand, we have
    \begin{align*}
        \vp_R \ast \mu(x) = R^d \int \vp(R(x-y)) \,d\mu(y)
        \geq R^d \mu(B(x,1/R)).
    \end{align*}
    And, hence, we get
    \begin{align}\label{eq:avg squared measure ll L2 Fourier}
        R^{2d} \int \mu(B(x,1/R))^2 \,dx \ll \norm{\hat{\mu}}^2_{L^2(B(R)}.
    \end{align}

    It remains to bound the left-hand side of~\eqref{eq:avg squared measure ll L2 Fourier} from below using a suitable moment sum.
    To this end, let $m=[\log_2 R] \in \N$, and let $m' = m+O_d(1)\in \N$ be such that for every cube $P\in \Dcal_{m'}$ and every $x\in P$, we have
    \begin{align*}
        B(x,1/R) \supseteq P.
    \end{align*}
    This yields the lower bound
    \begin{align*}
        \int \mu(B(x,1/R))^2 \,dx
        = \sum_{P\in \Dcal_{m'}}  \int_P \mu(B(x,1/R))^2 \,dx
        \gg 2^{-dm'} \sum_{P\in \Dcal_{m'}} \mu(P)^2 ,
    \end{align*}
    where we used the fact that each $P$ has Lebesgue measure $\asymp 2^{-dm'}$ in the last inequality.
    Moreover, by the Cauchy-Schwartz inequality, and the fact that each $Q\in \Dcal_m$ contains $O_d(1)$ boxes $P\in \Dcal_{m'}$, we get
    \begin{align*}
        \sum_{P\in \Dcal_{m'}} \mu(P)^2 
        = \sum_{Q\in \Dcal_m} \sum_{P\in \Dcal_{m'}, P\subseteq Q} \mu(P)^2
        \gg_d  \sum_{Q\in \Dcal_m} \left(\sum_{P\in \Dcal_{m'}, P\subseteq Q} \mu(P)\right)^2
        = \sum_{Q\in \Dcal_m} \mu(Q)^2.
    \end{align*}
    Combining the above estimates, we obtain
    \begin{align*}
        R^{2d} 2^{-dm'} s_m(\mu,2) \ll \norm{\hat{\mu}}^2_{L^2(B(R)}.
    \end{align*}
    This implies the desired inequality $s_m(\mu,2) \ll 2^{-dm}\norm{\hat{\mu}}^2_{L^2(B(R))}$ since $m'=m+O_d(1)$.
    \qedhere

\end{proof}

\section{Uniform pointwise Fourier decay away from the first coordinate} \label{Section away from first coordinate}
The goal of this Section is to prove Theorem~\ref{thm: case 1}.
The key step, Proposition~\ref{prop:Case 1 general curves}, is to show that for a self-similar measure pushed to a non-degenerate curve $\nu$ as in Theorem \ref{main theorem general}, there is an $\a>0$ such that:
$|\hat\nu(\th,\zeta)|\ll \norm{\zeta}^{-\a}$ for every $\xi =(\th,\zeta)\in \R\times \R^{d-1}$, with $\zeta\neq \mathbf{0}$. In the general context of Theorem \ref{main theorem general}, this will take care of the case when $\norm{\zeta}\gg|\th|^\e$. We begin with the  moment curve.

\subsection{The case of moment curves}

Let $d\geq 2$ and recall the moment curves $V_d$: 
$$V_d(x)= (x,x^2,\dots , x^d).$$

\begin{prop} \label{prop:Case 1 Veronese all dims multiple contractions}
    Let $\mu$ be a non-atomic self-similar measure for a uniformly contracting weighted IFS $(\Phi,\mbf{p})$ on $\R$. 
    Let $d\geq 2$ and assume that $V_{d-1}\mu$ satisfies the flattening estimate~\eqref{eq:flattening of moment d-1}.
    Let $\nu=V_d\mu$.
    Then, there exists $\a>0$ such that for every $\xi =(\th,\zeta)\in \R\times \R^{d-1}$ with $\zeta\neq \mathbf{0}$, we have $|\hat\nu(\xi)|\ll \norm{\zeta}^{-\a}$.
\end{prop}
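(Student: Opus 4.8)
The plan is to exploit the self-affinity of $\nu=V_d\mu$ provided by Corollary~\ref{cor self affine measure}, which expresses $\nu$ as a self-affine measure for an IFS $\Psi=\{F_i(\mathbf{x})=A_i\mathbf{x}+b_i\}$ driven by the same probability vector $\mathbf{p}$ as $\mu$. After inducing the IFS (as remarked after Lemma~\ref{Lemma self affine}) we may assume $\|A_i\|<1$ and $\Psi$ is uniformly contracting. Iterating the stationarity relation along a cut-set $P_\tau$ gives
\begin{align*}
    \hat\nu(\xi) = \sum_{\omega\in P_\tau} \mathbf{p}_\omega\, \widehat{F_\omega\mu}(\xi) = \sum_{\omega\in P_\tau} \mathbf{p}_\omega\, e(\langle \xi, b_\omega\rangle)\, \hat\nu\big(A_\omega^{\mathsf T}\xi\big),
\end{align*}
where $A_\omega=A_{\omega_1}\cdots A_{\omega_{|\omega|}}$. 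More usefully, unwrapping one further level and pushing the frequency $\xi=(\th,\zeta)$ through $V_d$ directly: $\widehat{V_d f_\omega\mu}(\xi) = \int e(\th f_\omega(x) + \langle \zeta, (f_\omega(x)^2,\dots,f_\omega(x)^d)\rangle)\,d\mu(x)$, and the phase, as a function of $x$, has derivative $f_\omega'(x)\cdot\big(\th + 2\zeta_1 f_\omega(x) + \cdots + d\,\zeta_{d-1} f_\omega(x)^{d-1}\big)$, i.e. $r_\omega$ times a polynomial whose coefficient vector is essentially $(\text{something}, 2\zeta_1,\dots,d\zeta_{d-1})$; here $r_\omega=\lambda_{\omega_1}\cdots\lambda_{\omega_{|\omega|}}$. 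So the oscillatory behaviour in each piece is governed by a one-dimensional frequency of size $\asymp |r_\omega|\cdot\|\zeta\|$ coupled to the ``direction'' $\zeta/\|\zeta\|$; this is where the derivative of $V_d$ behaving like a copy of $V_{d-1}$ enters.

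The key steps, in order: (1) Use Corollary~\ref{cor self affine measure} and the cut-set decomposition (Lemma~\ref{lem:cut set}, applied to the induced IFS) to write $\hat\nu(\xi)$ as an average $\sum_{\omega\in P_\tau}\mathbf{p}_\omega\, \hat\nu(A_\omega^{\mathsf T}\xi)$ over $|P_\tau|$ terms, choosing $\tau$ so that $|r_\omega|\asymp\tau$ uniformly on $P_\tau$ and so that $|r_\omega|\cdot\|\zeta\| \asymp 1$, i.e. $\tau\asymp \|\zeta\|^{-1}$; then $\|A_\omega^{\mathsf T}\xi\|$ has its last $(d-1)$ coordinates (the $\zeta$-block) of size $\asymp 1$ down to $\asymp\|\zeta\|^{-c}$, so we have reduced to controlling $\hat\nu$ at frequencies of bounded size in the $\zeta$-coordinates but with an essentially free first coordinate. (2) Estimate $\widehat{V_d f_\omega\mu}(\xi)$ by van der Corput / stationary phase in the $x$-variable: away from the finitely many zeros of the relevant polynomial in $f_\omega(x)$ (controlled via Lemma~\ref{lem:good intervals higher dimensions} applied to the non-degeneracy of $V_d$, i.e. the Vandermonde determinant), the phase derivative is $\gg |r_\omega|\|\zeta\| \asymp 1$, giving decay; the bad set where the derivative is small is an $O(1)$-union of intervals and is handled using the upper Frostman regularity of $\mu$ (Proposition~\ref{prop:mu is Frostman}). (3) Crucially, establish that the collection of ``effective one-dimensional frequencies'' arising from the average — which, by the derivative computation, are governed by $V_{d-1}\mu$ projected in the direction $\zeta/\|\zeta\|$ — is \emph{well-separated}: this is exactly where the inductive flattening hypothesis~\eqref{eq:flattening of moment d-1} for $V_{d-1}\mu$ is used, via Theorem~\ref{prop:flattening implies friendly} / Proposition~\ref{Prop:discrete friendly measures}, which gives a \emph{uniform} lower bound on the Frostman exponent of every projection of $V_{d-1}\mu$; this separation prevents the many Fourier coefficients in the average from conspiring. (4) Feed the well-separated family into Tsujii's large deviations estimate (Corollary~\ref{Coro Tsujii}, in the covering form): only $O_\varepsilon(\tau^{-\varepsilon})=O_\varepsilon(\|\zeta\|^{\varepsilon})$ of the sampled frequencies can have $|\hat\mu|$ (resp.\ the relevant one-dimensional transform) larger than a power of $\tau$, while $|P_\tau|\gg \tau^{-s}$ for some $s>0$ by the Frostman bound, so the average over $P_\tau$ is $\ll\tau^{\delta}$ for some $\delta>0$, i.e. $|\hat\nu(\xi)|\ll\|\zeta\|^{-\alpha}$.

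The main obstacle I expect is step (3) together with making step (2) uniform over the direction $\zeta/\|\zeta\|$: one must ensure that the van der Corput estimate and the size of the bad set do not degenerate as $\zeta/\|\zeta\|$ approaches a direction for which the polynomial $\th + 2\zeta_1 t + \cdots + d\zeta_{d-1}t^{d-1}$ nearly degenerates on $\supp(\mu)$, and one must quantitatively track how the Frostman exponents of the projections of $V_{d-1}\mu$ transfer to separation of the sampled frequency set after multiplying by $r_\omega$ and through the nonlinear map $x\mapsto f_\omega(x)$. This is the technical heart; the quantitative projection-Frostman input from~\eqref{eq:flattening of moment d-1} via Theorem~\ref{prop:flattening implies friendly} is precisely what is designed to overcome it. The remaining bookkeeping — passing from $V_d$ back to general non-degenerate/non-trapped curves — is deferred to Proposition~\ref{prop:Case 1 general curves} via Taylor expansion and a change of coordinates, and is not part of this proof.
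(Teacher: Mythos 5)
There is a genuine gap at the heart of your step (2), and it infects the rest of the argument. You propose to bound $\widehat{V_d f_\omega\mu}(\xi)$ by ``van der Corput / stationary phase in the $x$-variable'' after observing that, outside a small bad set, the phase derivative is $\gg |r_\omega|\|\zeta\|\asymp 1$. But van der Corput and stationary phase give decay for integrals against \emph{Lebesgue measure} (or smooth densities), and $\mu$ is a singular fractal measure — there is no decay principle of this form for $\int e(\phi(x))\,d\mu(x)$ when $\phi'$ is merely bounded below. (This is the whole point of the subject: if such estimates held, one would not need Tsujii's theorem or the large-deviations machinery at all.) Moreover, even granting such an estimate, a phase derivative of size $\asymp 1$ yields no gain; you would need it to be polynomially large in $\|\zeta\|$, which your normalization $\tau\asymp\|\zeta\|^{-1}$ explicitly destroys. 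So step (2) as written cannot produce decay, and steps (3)--(4), which are supposed to feed off it, lack a concrete quantity to control.

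What is missing is the paper's key maneuver: after the cut-set decomposition (grouping $\omega\in P_\zeta$ by contraction ratio $\lambda$), one applies \emph{Cauchy--Schwarz} to bound $|\hat\nu(\xi)|^2$ by a double sum over pairs $(\omega_1,\omega_2)$ with $\lambda_{\omega_1}=\lambda_{\omega_2}=\lambda$. The crucial payoff is that in the expanded phase $\langle \xi(A_{\omega_1}-A_{\omega_2}),x\rangle$ the diagonal parts of the matrices cancel, so the $\theta$-dependence drops out entirely and the term reduces to $\big|\hat\mu\big(\zeta\cdot\mathcal C^1_{\omega_1,\omega_2}\big)\big|$ plus a small error, where $\mathcal C^1_{\omega_1,\omega_2}$ is the first column of $A_{\omega_1}-A_{\omega_2}$. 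This is why the paper takes $\tau=\|\zeta\|^{-(1-\varrho)}$ (not $\|\zeta\|^{-1}$): the effective $1$-D frequency $\lambda\zeta\cdot y$ is then of polynomial size $\asymp\|\zeta\|^{\varrho}$, putting it in the regime where Tsujii's estimate (Corollary~\ref{Coro Tsujii}) applies. Finally, the flattening hypothesis on $V_{d-1}\mu$ is used exactly where you suspected — via Proposition~\ref{Prop:discrete friendly measures} — but the precise mechanism is to bound the mass that the measure $\gamma_\lambda\ast\check\gamma_\lambda$ (built from the translation parameters $(2t_\omega,\dots,dt_\omega^{d-1})$, i.e.\ a discretization of $\widetilde V_{d-1}\mu$) assigns to the preimage of the ``bad'' Tsujii set under the projection $y\mapsto\lambda\zeta\cdot y$, which is a neighborhood of finitely many affine hyperplanes. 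Your intuition about ``well-separation of the sampled frequencies'' is pointing at the right phenomenon, but without the Cauchy--Schwarz reduction to $\hat\mu$ and the convolution/projection bookkeeping there is no quantity to which the separation can be applied.

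In short: your plan correctly identifies the self-affine structure (Corollary~\ref{cor self affine measure}), the cut-set decomposition, the role of the flattening hypothesis through projection-Frostman bounds, and the role of Tsujii's theorem, but it replaces the essential Cauchy--Schwarz/decoupling step with a stationary-phase argument that does not apply to fractal measures and, even formally, gives no gain at your chosen scale. This is not a cosmetic difference; without it, the reduction to one-dimensional frequencies of $\hat\mu$ — the quantity Tsujii's theorem actually controls — never happens.
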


\begin{proof}[\textbf{Proof of Proposition~\ref{prop:Case 1 Veronese all dims multiple contractions}}]
    By replacing $\mu$ with an affine image of itself, we shall assume its support is contained in $[0,1]$.
    Let $\Phi=\set{f_i:f_i(x)=\l_ix+t_i }_{i\in I}$.
    Apply Corollary \ref{cor self affine measure} to find the IFS $\Psi=\set{F_i:i\in I}$ on $\R^d$ so that each $F_i$ is of the form $F_i(x)=A_ix+v_i$ for some $v_i\in \R^d$, and for some contracting lower triangular matrices $A_i$ as in the corollary.
    
    Let $\xi=(\th,\zeta)\in \R\times \R^{d-1}$ be a frequency with $\norm{\zeta}>1$.
    Fix a parameter $\varrho\in (0,1/2)$ to be chosen using Lemma~\ref{lem:apply discrete friendly} below.
    By abuse of notation, we use $P_\zeta$ to denote 
    the cut-set $P_\t$ defined in Definition ~\ref{def:cut sets} for $\t = \norm{\zeta}^{-(1-\varrho)}$. That is,
    $$P_\zeta \stackrel{\mrm{def}}{=} P_{\norm{\zeta}^{-(1-\varrho)}}$$
    Let $\L_\zeta=\set{\l_\w:\w\in P_\zeta}$ denote the set of all contraction ratios of the maps in the one-dimensional IFS $\Phi$ corresponding to words in $P_\zeta$. For $\l\in \L_\zeta$, define a measure $\g_\l$ on $\R^{d-1}$ by 
    \begin{align*}
        \g_\l = \sum_{\w\in P_\zeta, \l_\w=\l} \mbf{p}_{\w} \d_{(2t_\w, 3t_\w^2,\dots, dt_\w^{d-1})} .    
    \end{align*}
     Note that $\g_\l$ is not a probability measure in general, as in may have total mass less than $1$.
    Denote by $\check{\g}_\l$ the image of $\g_\l$ under the map $x\mapsto -x$.
    \begin{lem} \label{Lemma first bound}
     We have
    \begin{align*}
        |\hat{\nu}(\xi)|^{2}
        \leq      
        \#\L_\zeta^2 \times \sum_{\l\in \L_\zeta}
        \int |\hat{\mu}(\l \zeta \cdot y)| \;d (\g_\l\ast \check{\g}_\l)(y)
        +  O(\# \L_\zeta^2 \times \norm{\zeta}^{2\varrho-1}),
    \end{align*}
     where $\g_\l\ast \check{\g}_\l$ is the additive convolution of the two measures.
    \end{lem}
\begin{proof}
By Lemma~\ref{lem:cut set}, we obtain
    \begin{align*}
        \hat{\nu}(\xi)
        = \sum_{\w \in P_\zeta} \mbf{p}_\w \int  e( \langle \xi , F_\w(x)\rangle) \;d\nu(x)
        = \sum_{\l\in \L_\zeta}
        \int 
        \left(\sum_{\substack{\w \in P_\zeta, \l_\w=\l}} 
        \mbf{p}_\w  e( \langle \xi , F_\w(x)\rangle) \right) \;d\nu(x).
    \end{align*}
    Hence, by Cauchy-Schwarz and Jensen inequalities,
    \begin{align}
        |\hat{\nu}(\xi)|^2 
        \leq \#\L_\zeta^2 \times \sum_{\l\in \L_\zeta}
        \int \left|\sum_{\substack{\w \in P_\zeta, \l_\w=\l}} 
        \mbf{p}_\w  e( \langle \xi , F_\w(x)\rangle) \right|^2 \;d\nu(x).
    \end{align}
    Expanding the square, we obtain
    \begin{align*}
        |\hat{\nu}(\xi)|^2 
        \leq \#\L_\zeta^2 \times \sum_{\l\in \L_\zeta}
        \sum_{\substack{\w_1,\w_2\in P_\zeta, \\ \l_{\w_1}=\l=\l_{\w_2}}}
        \prod_{j=1}^2 \mathbf{p}_{\w_j}
       \left| \int   e( \langle \xi (A_{\w_1}-A_{\w_2}), x\rangle )  \;d\nu(x)\right|.
    \end{align*}

    Let $\Ccal^1_{\w_1,\w_2}$ denote the first column of the matrix $A_{\w_1}-A_{\w_2}$.
    By Lemma~\ref{Lemma self affine}, we have for all $x\in \supp(\nu)$, and all $\l\in \L_\zeta$, and $\w_1,\w_2\in P_\zeta$ with $\l_{\w_1}=\l=\l_{\w_2}$, that
        \begin{align*}
            \langle \xi (A_{\w_1}-A_{\w_2}), x\rangle
            =
             (\zeta \cdot \Ccal^1_{\w_1,\w_2} ) x_1 + O(\l^2 \norm{\zeta}) ).
        \end{align*}
    It follows that, as the projection of $\nu$ on the first coordinate is our original measure $\mu$,
      \begin{align*}
         \left| \int   e( \langle \xi (A_{\w_1}-A_{\w_2}), x\rangle )  \;d\nu(x)\right|
         = |\hat{\mu}(\zeta \cdot \Ccal^1_{\w_1,\w_2} ) + O(\l^2 \norm{\zeta}) ).
      \end{align*}

    To simplify notation, for $\vec{\w} = (\w_1,\w_{2})\in P_\zeta^2$, we write $\mbf{p}_{\vec{\w}} = \prod_{j=1}^{2} \mbf{p}_{\w_j}$.
    Recall that for all $\l\in \L_\zeta$, we have $\l \asymp \norm{\zeta}^{-(1-\varrho)}$.
    Hence, by combining the last estimate with the fact that the Fourier transform is Lipschitz continuous, we obtain
    \begin{align*}
        |\hat{\nu}(\xi)|^{2}
        &\leq      
        \#\L_\zeta^2 \times \sum_{\l\in \L_\zeta}
        \sum_{\substack{(\w_1,\w_2)\in P^2_\zeta, \\ \l_{\w_1}=\l=\l_{\w_2}}}
        \mathbf{p}_{\vec{\w}}
        |\hat{\mu}\left( \zeta \cdot \Ccal^1_{\w_1,\w_2}\right) |
        +  O(\# \L_\zeta^2 \times \norm{\zeta}^{2\varrho-1}).
    \end{align*}

    Next,  for $\zeta=(\zeta_1,\dots, \zeta_{d-1})$, we have by Lemma \ref{Lemma self affine}
    \begin{align*}
        \zeta \cdot \Ccal^1_{\w_1,\w_2}
        =  \sum_{k=2}^d \l k(t_{\w_1}^{k-1} - t_{\w_2}^{k-1}) \zeta_{k-1} .
    \end{align*}    
    For $\l\in \L_\zeta$ recall the definition of the measures $\g_\l$ and $\check{\g}_\l$ given before the Lemma. 
Then, the above bound can be rewritten as follows
    \begin{align*}
        |\hat{\nu}(\xi)|^{2}
        \leq      
        \#\L_\zeta^2 \times \sum_{\l\in \L_\zeta}
        \int |\hat{\mu}(\l \zeta \cdot y)| \;d (\g_\l\ast \check{\g}_\l)(y)
        +  O(\# \L_\zeta^2 \times \norm{\zeta}^{2\varrho-1}),
    \end{align*}
This was our claim. \qedhere   
\end{proof}
We will need a uniform Frostman estimate on the projections of the various measures $\g_\l$. This is how we select the value of $\varrho>0$ that appears in the definition of $P_\zeta$, and it is the only point in the proof where the assumption on $V_{d-1}\mu$ is used.
\begin{lem}\label{lem:apply discrete friendly}
    There are constants $C\geq 1,$  $\b>0$ and $0<\varrho <1/2$, depending only on $\mu$ and $d$, such that $\g_\l(W(\e))\leq C\e^\b$ for all proper affine subspaces $W$ and all $\e>\norm{\zeta}^{-\varrho}$.
    \end{lem}
    \begin{proof}
        By our assumption, 
        $$\forall \e>0\quad \exists p>1 \quad \forall R >0:\, \norm{ \widehat{V_{d-1}\mu} }^p_{L^p (B(R))} = O_\e(R^\e).$$
        Since the curve $$\widetilde{V}_{d-1}(x)=(2x,3x^2,\dots, dx^{d-1})$$ is a linear image of $V_{d-1}$, it follows that the measure $\tilde{\nu}\stackrel{\mrm{def}}{=}\widetilde{V}_{d-1}\mu$ also satisfies this property. 
        Hence, by Proposition~\ref{Prop:discrete friendly measures}, there are $\b>0$ and $\varrho >0$ such that for every $\t>0$ the discrete measure 
        \begin{align*}
            \tilde{\nu}_\t =  \sum_{\w\in P_\t} \mbf{p}_{\w} \d_{(2t_\w, 3t_\w^2,\dots, dt_\w^{d-1})} 
        \end{align*}
        give mass $O(\e^\b)$ to $\e$-neighborhoods of proper affine subspaces whenever $\e > \t^\varrho$.
        Moreover, without loss of generality, the parameter $\varrho$ may be taken $<1/2$.
        Note that $\g_\l(A) \leq \tilde{\nu}_\t(A)$ for all Borel sets $A$ and all $\l\in \L_\zeta$. Putting $\t=\norm{\zeta}^{-(1-\varrho)}$, the claim follows upon noting that $\norm{\zeta}^{-\varrho}\geq \t^{\varrho}=\norm{\zeta}^{-\varrho(1-\varrho)}$.
    \end{proof}

We are now in position to complete the proof of Proposition \ref{prop:Case 1 Veronese all dims multiple contractions}.

    Let $C\geq 1$ be such that the supports of all the measures $\g_\l \ast\check{\g}_\l$ are contained a ball of radius $C$ around the origin. Let $\l\in \L_\z$. 
    By Corollary ~\ref{Coro Tsujii}, for every $\e>0$ there is $\d>0$ so that for $|z|\leq C  \norm{\l\zeta} $, 
    $$|\hat{\mu}(z)|\ll \norm{\l \zeta}^{-\d}$$
    except for a set $B \subseteq \mathbb{R}$ of frequencies $z$ that is a union of $O_\e(\norm{\l \zeta}^\e)$ intervals of length $1$. Since every $\l,\l' \in \L_\z$ are $\l \asymp \l'$, we may assume this property (and $B$ in particular) is independent of the choice of $\l$.

    Fix  $\e =\b/2$, for $\b$ as in Lemma~\ref{lem:apply discrete friendly}.
    Hence, recalling that $\l \asymp \norm{\zeta}^{-(1-\varrho)}$ for all $\l\in\L_\zeta$, via Lemma \ref{Lemma first bound} and the bounds above, we obtain
    \begin{align*}
        |\hat{\nu}(\xi)|^{2}
        \leq      
        \#\L_\zeta^2 \times \sum_{\l\in \L_\zeta}
        \g_\l\ast \check{\g}_\l\left(\set{y: \l \zeta\cdot y \in B}\right)
        +  O(\# \L_\zeta^3 \times\norm{\zeta}^{-\varrho\d}+ \# \L_\zeta^2  \times \norm{\zeta}^{2\varrho-1}).
    \end{align*}

    Fix a unit-length interval $I=(a-1/2,a+1/2)\subseteq B$ for some $a\in \R$, and consider the affine hyperplane $W_a = \set{x: \l\zeta \cdot x=a}$. An elementary computation then shows that  if $y$ is such that $\l\zeta \cdot y\in I$, then $y\in W_a(\norm{\l\zeta}^{-1})$.
    Thus, noting that 
    $$\g_\l \ast \check{\g}_\l(A) = \int \g_\l(A+z) \,d\g_\l(z)  \text{ for any Borel set } A,$$ 
    we obtain
    \begin{align*}
        \g_\l \ast \check{\g}_\l( \set{y: \l\zeta\cdot y \in I}) \leq  \sup_{ W=W_a+z}\g_\l(W(\norm{\l\zeta}^{-1})) \leq C\norm{\l\zeta}^{-\b},
    \end{align*}
    where the supremum runs over all translations of $W_a$, and we applied Lemma \ref{lem:apply discrete friendly} in the last inequality.

    Putting together the above estimates, we arrive at the bound
    \begin{align*}
        |\hat{\nu}(\xi)|^{2}
        \ll      
        \#\L_\zeta^3 \times \norm{\zeta}^{(\e-\b)\varrho}+ \# \L_\zeta^3 \times\norm{\zeta}^{-\varrho\d}+ \# \L_\zeta^2 \times \norm{\zeta}^{2\varrho-1}.
    \end{align*}
    Finally, by Lemma~\ref{lem:cut set}, we have $\#\L_\zeta \ll (\log\norm{\zeta})^A$, where $A=\# \Phi+1$.
    Recalling that $\e=\b/2$ and $\varrho<1/2$, this bound completes the proof.
    \qedhere
\end{proof}

    \subsection{General non-affine curves, and proof of Proposition~\ref{prop:Case 1 general curves}}
    \label{sec:case 1 general curves}
    The goal of this section is to deduce Proposition~\ref{prop:Case 1 general curves} from
    Proposition~\ref{prop:Case 1 Veronese all dims multiple contractions}. 
    The deduction is via Taylor expansion and a change of coordinates. 
    Recall the notation $V_{d}$ set before Proposition \ref{prop:Case 1 Veronese all dims multiple contractions}.

\begin{proof}[\textbf{Proof of Proposition~\ref{prop:Case 1 general curves}}]
    
We give the proof in the case $Q(x)$ is a non-trapped analytic curve. 
The proof in the case $Q$ is a $C^{d+1}$ non-degenerate curve is very similar, and in fact simpler, due to the non-vanishing of the determinant in \eqref{eq:nondegenerate} over an entire neighborhood of $\supp(\mu)$.

As usual, by replacing $\mu$ with an affine image of itself, we shall assume its support is contained in $[0,1]$.
Let $\Phi=\set{f_i:f_i(x)=\l_ix+t_i }_{i\in I}$.
Let $\a>0$ be the exponent provided by Proposition~\ref{prop:Case 1 Veronese all dims multiple contractions}.
Fix a frequency $\xi = (\th,\zeta)\in \R\times \R^{d-1}$, with $\norm{\zeta}>1$, and define
\begin{align*}
    \t = \norm{\zeta}^{-(1+\a)/(1+d(1+\a))}.
\end{align*}
Let $P_\t$ be the cut-set defined in Def.~\ref{def:cut sets}.
Without loss of generality, we shall assume over the course of the proof that $\norm{\zeta}$ is sufficiently large, depending only on $g$ and $\mu$.

Let $\varrho >0$ be the parameter provided by Proposition~\ref{prop:mu is Frostman}.
Let $\e =\e(\a, g,d) \in (0,\varrho)$ to be chosen at the end of the proof to be sufficiently small depending only on $g$, the ambient dimension $d$, and the exponent $\a$, and let 
\begin{align*}
    \d= \t^\e.
\end{align*}   
Let  $E = E(\d) \subset [0,1]$ be the set provided by Lemma~\ref{lem:good intervals higher dimensions} and let 
$$P'_\t=\set {\w \in P_\t:\, f_\w(0)\in E }.$$  
For a word $\w\in I^\ast$, we write $\nu_\w$ for the pushforward of $f_\w \mu$ under $x\mapsto (x,g(x))$.
\begin{lem} \label{lemma away from degeneracy}
For the Frostman exponent $s_0$ of $\mu$ we have
\begin{align*}
       |\widehat{\nu}(\xi)|
    \leq   \sum_{\w\in P_\t\setminus P_\t'}  \mbf{p}_\w |\widehat{\nu_\w}(\xi)| +
     O_{g,\mu} \left( \d^{s_0} \right).
\end{align*}
\end{lem}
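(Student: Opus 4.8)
The plan is to exploit the self-similarity of $\mu$ to expand $\widehat{\nu}(\xi)$ as a weighted sum over the cut-set $P_\t$, and then to discard the words in the exceptional set $P'_\t$ by a trivial bound, using the Frostman regularity of $\mu$ to see that their total $\mbf{p}$-mass is $O(\d^{s_0})$.

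First I would invoke Lemma~\ref{lem:cut set}(1) to write $\mu = \sum_{\w\in P_\t} \mbf{p}_\w\, f_\w\mu$, and push this forward by $Q(x)=(x,g(x))$ to get $\nu = \sum_{\w\in P_\t} \mbf{p}_\w\, \nu_\w$, where $\nu_\w = Q(f_\w\mu)$. Taking Fourier transforms and applying the triangle inequality yields
\begin{align*}
    |\widehat{\nu}(\xi)| \;\leq\; \sum_{\w\in P_\t\setminus P'_\t} \mbf{p}_\w\, |\widehat{\nu_\w}(\xi)| \;+\; \sum_{\w\in P'_\t} \mbf{p}_\w\, |\widehat{\nu_\w}(\xi)|,
\end{align*}
and since every $f_\w\mu$ — hence every $\nu_\w$ — is a probability measure, we have $|\widehat{\nu_\w}(\xi)|\leq 1$, so the whole problem reduces to showing $\sum_{\w\in P'_\t}\mbf{p}_\w \ll_{g,\mu} \d^{s_0}$.

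To prove that, I would localize the supports of the bad pieces. Writing $f_\w(x)=\l_\w x + t_\w$ with $t_\w=f_\w(0)$, for $x\in K\subseteq[0,1]$ one has $|f_\w(x)-t_\w| = |\l_\w|\,|x|\leq|\l_\w|<\t\leq\d$ (the last step since $\d=\t^\e$ with $\e<1$ and $\t<1$), so $\supp(f_\w\mu)=f_\w(K)\subseteq B(t_\w,\d)$. Recalling from the proof of Lemma~\ref{lem:good intervals higher dimensions} that $E=E(\d)$ may be taken to be the $\d$-neighborhood of a finite set $Z\subseteq[0,1]$ with $\#Z=O_g(1)$, each $\w\in P'_\t$ admits some $z_\w\in Z$ with $|t_\w-z_\w|\leq\d$, so $f_\w(K)\subseteq B(z_\w,2\d)$. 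Now $\sum_{\w\in P_\t}\mbf{p}_\w f_\w\mu=\mu$ as nonnegative measures, so the sub-sum over $P'_\t$ is dominated by $\mu$; evaluating it on $\bigcup_{z\in Z}B(z,2\d)$, which contains $\supp(f_\w\mu)$ for every $\w\in P'_\t$ — so that each $f_\w\mu$, $\w\in P'_\t$, contributes its full mass $1$ — gives
\begin{align*}
    \sum_{\w\in P'_\t}\mbf{p}_\w \;\leq\; \mu\Big(\bigcup_{z\in Z}B(z,2\d)\Big) \;\leq\; \sum_{z\in Z}\mu\big(B(z,2\d)\big) \;\ll_{g,\mu}\; \d^{s_0},
\end{align*}
the last inequality being Proposition~\ref{prop:mu is Frostman} together with $\#Z=O_g(1)$. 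Plugging this into the display above completes the proof.

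I do not expect a genuine obstacle here: the argument is essentially triangle inequality plus a Frostman estimate. The only two points needing a little care are that all the pieces $f_\w\mu$ with $\w\in P'_\t$ lie in a union of $O_g(1)$ balls of radius $O(\d)$ — which combines the definition of $P_\t$ with the explicit form of $E(\d)$ from Lemma~\ref{lem:good intervals higher dimensions} — and the bookkeeping that $\sum_{\w\in P'_\t}\mbf{p}_\w f_\w\mu$ is dominated by $\mu$, so that Proposition~\ref{prop:mu is Frostman} applies directly.
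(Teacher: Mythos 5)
Your argument is correct, and it departs from the paper's route in a small but genuine way worth noting. Both proofs open identically: expand $\nu$ over the cut-set $P_\t$, apply the triangle inequality, and reduce to showing $\sum_{\w\in P'_\t}\mbf{p}_\w \ll_{g,\mu}\d^{s_0}$. The paper then observes that $\sum_{\w\in P'_\t}\mbf{p}_\w = \mu_\t(E)$ with $\mu_\t$ the discrete measure from Definition~\ref{def:cut sets}, and invokes the \emph{second} assertion of Proposition~\ref{prop:mu is Frostman} — the one about the discretizations $\mu_\t$ — using the condition $\d>\t^\varrho$ (this is exactly why the proof of Proposition~\ref{prop:Case 1 general curves} imposes $\e<\varrho$). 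You instead bound $\sum_{\w\in P'_\t}\mbf{p}_\w$ by $\mu\big(\bigcup_{z\in Z}B(z,2\d)\big)$, using that each $f_\w\mu$ with $\w\in P'_\t$ is fully supported in the $2\d$-neighborhood of the zero set $Z$ (since $\mathrm{diam}(\supp f_\w\mu)\ll\t<\d$ and $t_\w\in E=Z^{(\d)}$), and then apply the \emph{first} assertion of Proposition~\ref{prop:mu is Frostman} to $\mu$ itself. This is slightly more elementary: it needs only the basic Feng–Lau Frostman bound on $\mu$, not its transfer to the discrete measures $\mu_\t$, and it only uses $\d>\t$ rather than $\d>\t^\varrho$. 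The price is the small geometric step localizing the supports of the bad pieces, which you handle correctly. Either derivation is fine; your version has the minor advantage of decoupling this lemma from the $\varrho$-threshold in the discretized Frostman estimate.
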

\begin{proof}
 By stationarity of $\mu$ (Lemma~\ref{lem:cut set}) and the triangle inequality, we have
\begin{align*}
    |\widehat{\nu}(\xi)|
    &\leq  \sum_{\w\in P_\t\setminus P_\t'}  \mbf{p}_\w |\widehat{\nu_\w}(\xi)| + \sum_{\w\in  P_\t'} \mbf{p}_\w
    .
\end{align*}
By Lemma \ref{lem:good intervals higher dimensions}, $E$ is a union of $O_g(1)$ $\d$-intervals. Therefore, by Proposition~\ref{prop:mu is Frostman}, since $\d>\t^\varrho$, for the Frostman exponent $s_0$ of $\mu$ we have
\begin{align*}
       |\widehat{\nu}(\xi)|
    \leq  \sum_{\w\in P_\t\setminus P_\t'}  \mbf{p}_\w |\widehat{\nu_\w}(\xi)| + 
     \sum_{\w\in P_\t'} \mbf{p}_\w 
     = \sum_{\w\in P_\t\setminus P_\t'}  \mbf{p}_\w |\widehat{\nu_\w}(\xi)| +
     O_{g,\mu} \left( \d^{s_0} \right).
\end{align*}
The proof is complete.
\end{proof}

Thus, it remains to estimate the sum over $P_\t\setminus P_\t'$.
\begin{lem} \label{lemma intermediate step}
If $\t$ is sufficiently small then for every $\w\in P_\t$ we have, writing $f_\w(x)=\l_\w x+t_\w$, 
\begin{align*}
     \left|\widehat{\nu}_\w(\xi)\right| \leq 
\left|\int e( \langle \xi_\w, V_d(\l_\w x) \rangle)\,d \mu(x)\right| + O(\norm{\zeta} \t^{d+1}).
\end{align*}
Here, given a frequency $\xi=(\th,\zeta)\in \R\times \R^{d-1}$, we define $\xi_\w := (\th_\w, \zeta_\w)$, where
\begin{align*}
    \th_\w := \th+\langle\zeta, g'(t_\w)\rangle,
    \qquad 
    \zeta_\w: = \left( \langle \zeta , g''(t_\w)\rangle /2, \dots, \langle\zeta, g^{(d)}(t_\w)\rangle /d!
    \right).
\end{align*}
\end{lem}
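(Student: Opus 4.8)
The plan is to reduce $\widehat{\nu_\w}(\xi)$ to an integral over the model curve $V_d$ by Taylor expanding $g$ around the basepoint $t_\w = f_\w(0)$ of the word $\w$, using that $\supp(f_\w\mu) \subseteq f_\w([0,1])$ is an interval of length $\asymp |\l_\w| \asymp \t$, so the displacement $f_\w(x) - t_\w = \l_\w x$ is of size $O(\t)$ throughout the domain of integration. First I would write $\widehat{\nu_\w}(\xi) = \int e(\langle \xi, (f_\w(x), g(f_\w(x)))\rangle)\, d\mu(x)$ by change of variables, and split the phase into its first coordinate $\th f_\w(x) = \th(\l_\w x + t_\w)$ and the part $\langle \zeta, g(f_\w(x))\rangle$. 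Taylor expanding $g$ to order $d$ about $t_\w$ gives
\begin{align*}
    g(t_\w + \l_\w x) = g(t_\w) + \sum_{k=1}^{d} \frac{g^{(k)}(t_\w)}{k!}(\l_\w x)^k + O(\norm{g^{(d+1)}}_\infty |\l_\w x|^{d+1}),
\end{align*}
where the remainder is uniform since $g$ is $C^{d+1}$ on a neighborhood of $\supp(\mu)$ and $t_\w, t_\w + \l_\w x$ both lie in a fixed compact subset of $U$ for $\t$ small. Pairing with $\zeta$ and collecting the constant-in-$x$ terms $\th t_\w + \langle \zeta, g(t_\w)\rangle$ into a unimodular prefactor (which does not affect $|\widehat{\nu_\w}(\xi)|$), the remaining phase is exactly $\th_\w (\l_\w x) + \sum_{k=2}^{d} \langle \zeta, g^{(k)}(t_\w)\rangle (\l_\w x)^k / k!$, which is precisely $\langle \xi_\w, V_d(\l_\w x)\rangle$ with $\xi_\w = (\th_\w, \zeta_\w)$ as defined in the statement (note the $k=1$ term contributes $\langle \zeta, g'(t_\w)\rangle \l_\w x$, which combines with $\th \l_\w x$ to give $\th_\w \l_\w x$, the first coordinate of $\langle \xi_\w, V_d(\l_\w x)\rangle$).

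Next I would control the error term. Replacing $e(\langle \xi, (f_\w(x), g(f_\w(x)))\rangle)$ by $e(\text{prefactor}) \cdot e(\langle \xi_\w, V_d(\l_\w x)\rangle)$ introduces an additive error bounded by $\int |e(\text{Taylor remainder paired with }\zeta) - 1|\, d\mu(x)$, and since $|e(y) - 1| \leq 2\pi |y|$, this is at most $O(\norm{\zeta} \cdot \norm{g^{(d+1)}}_\infty \cdot |\l_\w|^{d+1})$. Because $\w \in P_\t$ we have $|\l_\w| < \t$, so $|\l_\w|^{d+1} \leq \t^{d+1}$, giving the stated error $O(\norm{\zeta}\t^{d+1})$, with the implicit constant depending only on $g$ (through $\norm{g^{(d+1)}}_\infty$ on the relevant compact set). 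Integrating in $x$ against the probability measure $\mu$ and applying the triangle inequality yields
\begin{align*}
    |\widehat{\nu_\w}(\xi)| \leq \left|\int e(\langle \xi_\w, V_d(\l_\w x)\rangle)\, d\mu(x)\right| + O(\norm{\zeta}\t^{d+1}),
\end{align*}
which is the claim.

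The main obstacle is essentially bookkeeping rather than a genuine difficulty: one must verify that the combinatorial rearrangement of the binomial expansion of $(\l_\w x)^k$ versus the monomials of $V_d$ matches the definition of $\xi_\w$ exactly — in particular that no spurious cross terms appear and that the $1/k!$ normalizations line up — and one must be careful that the Taylor remainder is genuinely uniform over all $\w \in P_\t$, which requires that all basepoints $t_\w$ stay in a fixed compact neighborhood of $\supp(\mu)$ inside $U$ (true once $\t$ is small enough, since $t_\w \in \supp(\mu) + O(\t)$). I would also note that the power $d+1$ in the error is exactly what is gained by the $C^{d+1}$ regularity hypothesis and is the reason the non-degeneracy condition only needs derivatives up to order $d$ in \eqref{eq:nondegenerate} but one extra order of smoothness for the error estimate. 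Everything else is a routine change of variables and application of $|\l_\w| < \t$.
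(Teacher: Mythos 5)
Your proof is correct and takes essentially the same route as the paper: Taylor expand $g$ about $t_\w$ to order $d$, identify the non-constant part of the phase with $\langle \xi_\w, V_d(\cdot)\rangle$, absorb the constant terms into a unimodular prefactor, and control the Taylor remainder paired with $\zeta$ using $|e(y)-1|\ll|y|$. The only differences are cosmetic — you change variables $x\mapsto f_\w(x)$ at the start while the paper does it at the end, and you invoke $C^{d+1}$ regularity where the paper invokes analyticity (the paper explicitly notes the $C^{d+1}$ case is simpler); also your worry about ``binomial expansion of $(\l_\w x)^k$'' is a non-issue since expanding in powers of $x-t_\w$ directly produces the monomials of $V_d(\l_\w x)$ after the change of variables, with no cross terms.
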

\begin{proof}
Note that  
$$\mrm{diam}(\supp(f_\w \mu)) \asymp \t \text{ for every } \omega \in P_\t. $$
We  assume  $\t$ is small enough so that $g$ can be written as a power series in a neighborhood of $t_\w$ that contains $\supp(f_\w\mu)$. By considering the Taylor expansion of $g$ about $t_\w$, we see that for every $x\in \supp(f_\w\mu)$,
\begin{align*}
    \langle \xi, (x,g(x))\rangle 
    = \zeta \cdot g(t_\w) + \xi_\w \cdot V_d(x-t_\w) + O(\norm{\zeta} \t^{d+1}),
\end{align*}
where we recall that $V_d(y) = (y,y^2,\dots,y^d)$ is the moment curve.
Hence, by the Lipchitz continuity of the Fourier transform,  we obtain the bound
\begin{align*}
    \left|\widehat{\nu}_\w(\xi)\right| \leq 
\left|\int e( \langle \xi_\w, V_d(x-t_\w) \rangle)\,df_\w \mu(x)\right| +O(\norm{\zeta} \t^{d+1}).    
\end{align*}
Recalling that $f_\w(x)=\l_\w x+t_\w$ thus yields
\begin{align*}
     \left|\widehat{\nu}_\w(\xi)\right| \leq 
\left|\int e( \langle \xi_\w, V_d(\l_\w x) \rangle)\,d \mu(x)\right| + O(\norm{\zeta} \t^{d+1}).    
\end{align*}
This proves the Lemma
\end{proof}

Let $x\in [0,1]$ and $\omega \in P_\t$. We define two $(d-1)\times (d-1)$ matrices $G(x),D_\omega$ via 
$$G(x) \text{ is the } ((d-1)\times (d-1)) \text{ matrix with } k^{th}  \text{ column given by the vector } g^{(k+1)}(x)$$ 
and 
$$D_\w \text{ is the } (d-1)\times (d-1) \text{ diagonal matrix with } k^{th} \text{ diagonal entry } \l^{k+1}_\w/(k+1)!.$$
Let $F_\omega (x)= D_\w\cdot G(x)$. When $\omega$ is fixed we simply write $F(x)$. Note that 
$$\langle \xi_\w, V_d(r_\w x) \rangle = \langle (\l_\w\th_\w, \zeta\cdot F(t_\w) ), V_d( x) \rangle.$$
Hence, by Proposition~\ref{prop:Case 1 Veronese all dims multiple contractions} and Lemma \ref{lemma intermediate step}, we obtain for some $\a>0$
\begin{align}\label{eq:reduce to Veronese}
    \left|\widehat{\nu}_\w(\xi)\right| \ll 
\norm{\zeta \cdot F(t_\w)}^{-\a} + \norm{\zeta} \t^{d+1}.
\end{align}
Recall the definition of $P_{\t'}$ from before Lemma \ref{lemma away from degeneracy}.
\begin{lem}\label{claim:lower bound by determinant}
    There is $C\geq 1$, depending only on $g$, such that for every $\w\in P_\t\setminus P_\t'$
    \begin{align*}
    \norm{\zeta \cdot F(x)}\geq \norm{\zeta \cdot D_\w} |\det(G(x)|/C ,    
    \end{align*}
    uniformly over $x\in [0,1]$ and $\zeta\in \R^{d-1}$, where  $\det(-)$ is the matrix determinant.
\end{lem}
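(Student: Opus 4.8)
The goal is a lower bound of the shape $\norm{\zeta \cdot F(x)} \geq \norm{\zeta \cdot D_\w}\,|\det(G(x))|/C$, where $F(x) = D_\w G(x)$, with $D_\w$ the diagonal matrix with entries $\l_\w^{k+1}/(k+1)!$ and $G(x)$ the matrix whose $k$-th column is $g^{(k+1)}(x)$. The plan is to treat $F(x)$ as a linear map applied to the row vector $\zeta$, so $\zeta \cdot F(x) = (\zeta \cdot D_\w) G(x)$, i.e. write $\eta := \zeta \cdot D_\w \in \R^{d-1}$ and bound $\norm{\eta\, G(x)}$ from below by $\norm{\eta}\,|\det G(x)|/C$. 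This is a purely linear-algebraic statement: for any $(d-1)\times(d-1)$ matrix $M$ and any row vector $\eta$, one has $\norm{\eta M} \geq \norm{\eta}\,|\det M|/(\text{something depending on }\norm{M})$, because $\norm{\eta M}\cdot \norm{M^{-1}}^{-1} \leq$ no — more directly, $|\eta| = |\eta M M^{-1}| \leq \norm{M^{-1}}_{\mathrm{op}} \norm{\eta M}$ and $\norm{M^{-1}}_{\mathrm{op}} \leq \norm{\mathrm{adj}(M)}/|\det M| \ll \norm{M}^{d-2}/|\det M|$.

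First I would reduce to this linear-algebra fact and supply the uniform bound on $\norm{G(x)}$: since $g$ is analytic (or $C^{d+1}$) on a compact neighborhood of $\supp(\mu)\subseteq[0,1]$, all derivatives $g^{(2)},\dots,g^{(d)}$ are bounded there, so $\norm{G(x)}\ll_g 1$ uniformly on $[0,1]$, and likewise $\norm{\mathrm{adj}(G(x))}\ll_g 1$. Then for $x \in [0,1]\setminus E$, by the defining property of $E$ from Lemma~\ref{lem:good intervals higher dimensions} (and using that $\w \in P_\t \setminus P_\t'$ means $f_\w(0)=t_\w \notin E$), we have $|\det G(x)| > 0$ with a quantitative lower bound $\gg_g \d^{c_1}$ — though notably the statement to be proved is uniform over $x\in[0,1]$ with the determinant appearing on the right-hand side, so actually we do not need the lower bound on the determinant here at all: the claimed inequality is simply $\norm{\eta G(x)} \geq \norm{\eta}\,|\det G(x)|/C$, which holds for every $x\in[0,1]$ with $C = C(g)$ coming only from the uniform bound $\norm{\mathrm{adj}(G(x))}\ll_g 1$ (when $\det G(x)=0$ the inequality is trivial).

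So the core computation is: $\eta = (\eta G(x))\,\mathrm{adj}(G(x))/\det G(x)$ (Cramer's rule / the identity $G\cdot\mathrm{adj}(G) = \det(G) I$), hence $\norm{\eta}\,|\det G(x)| = \norm{(\eta G(x))\,\mathrm{adj}(G(x))} \leq \norm{\eta G(x)}\cdot \norm{\mathrm{adj}(G(x))}_{\mathrm{op}} \leq C \norm{\eta G(x)}$, where $C$ bounds $\norm{\mathrm{adj}(G(x))}_{\mathrm{op}}$ uniformly on $[0,1]$; rearranging and substituting $\eta = \zeta\cdot D_\w$ and $\eta G(x) = \zeta\cdot D_\w G(x) = \zeta\cdot F(x)$ gives the claim. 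One has to be a little careful with the relation between the max-norm $\norm{\cdot}$ used in the paper and the operator norm, but these differ only by dimensional constants absorbed into $C$; I'd state the identity with operator norms and then convert.

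I do not expect a genuine obstacle here — the statement is essentially an invertibility estimate for $G(x)$ dressed up with the diagonal rescaling $D_\w$, and the only subtle point is noticing that the determinant lower bound from Lemma~\ref{lem:good intervals higher dimensions} is what will be needed \emph{downstream} (to make \eqref{eq:reduce to Veronese} effective once this claim is in hand), not inside the proof of this claim. The mild bookkeeping points are: (i) keeping track of norm conventions (max vs.\ operator vs.\ Euclidean), all of which cost only constants depending on $d$; (ii) making sure the analyticity/$C^{d+1}$ hypothesis is invoked only to get the uniform bound on $\mathrm{adj}(G(x))$ over the compact set $[0,1]$, so the argument is identical in both cases of the hypothesis. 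I would write the proof in two short steps: the linear algebra identity with the resulting operator-norm inequality, then the substitution and norm conversion.

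\begin{proof}
We may assume $\zeta\neq \mathbf 0$, as otherwise both sides vanish. Write $\eta := \zeta\cdot D_\w\in\R^{d-1}$ (a row vector), so that $\zeta\cdot F(x) = \zeta\cdot D_\w G(x) = \eta\, G(x)$. If $\det(G(x))=0$ the asserted inequality is trivial, so assume $\det(G(x))\neq 0$. By Cramer's rule, $G(x)\cdot\mrm{adj}(G(x)) = \det(G(x))\cdot \id$, hence
\begin{align*}
    \eta\cdot \det(G(x)) = \big(\eta\, G(x)\big)\cdot \mrm{adj}(G(x)).
\end{align*}
Since the entries of $\mrm{adj}(G(x))$ are $(d-2)\times(d-2)$ minors of $G(x)$, and since $g$ is $C^{d+1}$ (in particular $g^{(2)},\dots,g^{(d)}$ are continuous) on a compact neighborhood of $[0,1]$, there is a constant $C=C(g)\geq 1$, independent of $x$ and of $\w$, with
\begin{align*}
    \norm{v\cdot \mrm{adj}(G(x))} \leq C\,\norm{v}, \qquad \text{for all } v\in\R^{d-1} \text{ and all } x\in[0,1].
\end{align*}
Applying this with $v = \eta\,G(x)$ and using the previous display gives
\begin{align*}
    \norm{\eta}\,|\det(G(x))| = \norm{\big(\eta\,G(x)\big)\cdot\mrm{adj}(G(x))} \leq C\,\norm{\eta\,G(x)}.
\end{align*}
Dividing by $C$ and recalling $\eta = \zeta\cdot D_\w$ and $\eta\,G(x) = \zeta\cdot F(x)$ yields
\begin{align*}
    \norm{\zeta\cdot F(x)} \geq \norm{\zeta\cdot D_\w}\,|\det(G(x))|/C,
\end{align*}
as claimed. (When $\det(G(x))=0$ this is the trivial bound $\norm{\zeta\cdot F(x)}\geq 0$.)
\end{proof}
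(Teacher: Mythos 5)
Your proof is correct and takes the same underlying approach as the paper's: both establish the elementary linear-algebra fact that $\norm{\eta\,G(x)}\geq \norm{\eta}\,|\det G(x)|/C$ with $C$ depending only on a uniform upper bound for $G$ over the compact set $[0,1]$, and then substitute $\eta=\zeta\cdot D_\w$. The paper phrases the estimate via singular values, writing $\norm{\eta\,G(x)}\geq \s_{\min}(x)\norm{\eta}$ together with $\s_{\min}(x)=|\det G(x)|/(\text{product of the other singular values})$, while you phrase it via the adjugate identity $G\cdot\mrm{adj}(G)=\det(G)\,\id$; these are exactly the same bound (the operator norm of $\mrm{adj}(G)$ is that product of singular values). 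One small point where your write-up is actually cleaner: the paper's proof remarks that $\s_{\min}(x)$ is non-zero ``in view of Lemma~\ref{lem:good intervals higher dimensions} and the choice of $\omega\in P_{\t'}$,'' which is both a typo (should be $P_\t\setminus P_\t'$) and unnecessary for the claim as stated --- since the inequality is asserted ``uniformly over $x\in[0,1]$'' with $|\det G(x)|$ on the right, it is vacuous when the determinant vanishes, as you observe. You are also right that the hypothesis $\w\in P_\t\setminus P_\t'$ (hence $t_\w\notin E$, giving the quantitative lower bound on $|\det G(t_\w)|$ from Lemma~\ref{lem:good intervals higher dimensions}) is not used in the proof of this lemma, but only downstream to make \eqref{eq:reduce to Veronese} effective.
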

\begin{proof}
    Let $\s_{\min}(x)$ denote the  singular value of smallest magnitude of $G(x)$. Then 
    $$\norm{\zeta\cdot F(x)}\geq \s_{\min}(x) \norm{\zeta \cdot D_\w}.$$
    Note that $\s_{\min}(x)$ is non-zero in view of Lemma~\ref{lem:good intervals higher dimensions} and the choice of $\omega \in P_{\t'}$.
    Moreover, 
    $$\s_{\min}(x) \geq |\det(G(x)|/C(x), \text{ where } C(x)= \text{ the product of all the other singular values of } G(x).$$
    Hence, since the singular values of $G(x)$ are non-vanishing and vary continuously with $x$, $C(x)$ is uniformly bounded above over $x\in [0,1]$. This implies the Lemma.
\end{proof}

We can now complete the proof. Let $\omega \in P_{\t} \setminus P_{\t'}$. Note that $\norm{\zeta \cdot D_\w} \gg \norm{\zeta} \l_\w^{d}$.
Hence, by~\eqref{eq:reduce to Veronese}, Lemma ~\ref{claim:lower bound by determinant} and Lemma~\ref{lem:good intervals higher dimensions}, we obtain
\begin{align*}
      \left|\widehat{\nu}_\w(\xi)\right| 
      \ll \d^{-\a c_1} \norm{\zeta \cdot D_\w}^{-\a} + \norm{\zeta} \t^{d+1}
      \ll \d^{-\a c_1} \l_\w^{-d \a} \norm{\zeta}^{-\a} + \norm{\zeta} \t^{d+1}
    ,
\end{align*}
where $c_1 =c_1(g)\geq 1$ is the exponent provided by Lemma~\ref{lem:good intervals higher dimensions}.
Recalling that $\l_\w \asymp \t $, we obtain
\begin{align*}
      \left|\widehat{\nu}_\w(\xi)\right| 
      \ll
      \d^{-\a c_1} \t^{-d \a } \norm{\zeta}^{-\a} 
      + \norm{\zeta} \t^{d+1}
      \leq \d^{-\a c_1} (\t^{-d \a } \norm{\zeta}^{-\a} 
      + \norm{\zeta} \t^{d+1})
      .
\end{align*}
Setting $\b= \a/(d(1+\a)+1)$, by our choice of $\t$ the above bound becomes
\begin{align*}
    \left|\widehat{\nu}_\w(\xi)\right| 
      \ll
      \d^{-\a c_1} \norm{\zeta}^{-\b}.
\end{align*}
Recall that $\d=\t^\e$.
Thus, taking $\e$ sufficiently small, depending on $\a$ and $c_1$, we can ensure that $\d^{-\a c_1} \norm{\zeta}^{-\b}$ is at most $\norm{\zeta}^{-\b/2}$. 
Hence, the result follows by combining the above bound with Lemma \ref{lemma away from degeneracy}.
\qedhere
\end{proof}

\subsection{Average decay away from the first coordinate, and proof of Theorem~\ref{thm: case 1}}

Proposition~\ref{prop:Case 1 general curves} yields the following  Corollary, which is a more precise form of Theorem~\ref{thm: case 1}.  

\begin{cor}\label{cor:integrate Case 1 bound}
    Let $\mu, g,$ and $\nu$ be as in Theorem~\ref{thm: case 1}.
    For $R\geq 1$ and $\e>0$,
    let 
    \begin{align*}
        C_{R,\e} =\set{(\th,\zeta)\in \R\times \R^{d-1}: |\th|^\e \leq \norm{\zeta}\leq R}.
    \end{align*}
    Let $\g>0$ be the parameter provided by Proposition~\ref{prop:Case 1 general curves}.
    Then, for $p>d^3/\g\e^2$,
    \begin{align*}
        \int_{C_{R,\e}} |\widehat{\nu}(\xi)|^p\;d\xi =O(R^\e).
    \end{align*}
\end{cor}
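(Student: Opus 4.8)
The goal is to integrate the pointwise bound $|\widehat{\nu}(\xi)| \ll \norm{\zeta}^{-\g}$ from Proposition~\ref{prop:Case 1 general curves} over the region $C_{R,\e}$ and show the $L^p$-norm is $O(R^\e)$. The plan is to slice the region $C_{R,\e}$ according to the dyadic scale of $\norm{\zeta}$ and the scale of $\abs{\th}$, then sum the resulting geometric-type series.

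First I would decompose $C_{R,\e}$ as a union over $j,k$ of the sets where $\norm{\zeta}\asymp 2^{-j}$-type dyadic blocks — more precisely, write $C_{R,\e} = \bigcup_{k} \set{\xi : 2^{k}\leq \norm{\zeta} < 2^{k+1}} \cap C_{R,\e}$, where $k$ ranges over $0\leq k \leq \log_2 R$. On the block where $\norm{\zeta}\asymp 2^k$, Proposition~\ref{prop:Case 1 general curves} gives $|\widehat{\nu}(\xi)|^p \ll 2^{-kp\g}$. For the measure of this block inside $C_{R,\e}$: the $\zeta$-coordinates live in an annulus of $(d-1)$-dimensional volume $\asymp 2^{k(d-1)}$, while the constraint $\abs{\th}^\e \leq \norm{\zeta}$ forces $\abs{\th} \leq \norm{\zeta}^{1/\e} \asymp 2^{k/\e}$, so the $\th$-range has length $\asymp 2^{k/\e}$. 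Hence the block has Lebesgue measure $\ll 2^{k(d-1) + k/\e} \leq 2^{2dk/\e}$ (using $\e<1$, say, and $d\geq 2$, so $d-1+1/\e \leq 2d/\e$; in any case the exponent is $O(d/\e)$).

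Putting these together,
\begin{align*}
    \int_{C_{R,\e}} |\widehat{\nu}(\xi)|^p \,d\xi
    \ll \sum_{k=0}^{\lceil\log_2 R\rceil} 2^{-kp\g} \cdot 2^{2dk/\e}
    = \sum_{k=0}^{\lceil \log_2 R\rceil} 2^{k(2d/\e - p\g)}.
\end{align*}
If $p\g > 2d/\e + \e$, i.e. $p > (2d/\e + \e)/\g$ — which is implied by $p > d^3/\g\e^2$ for the relevant ranges of parameters (since $d\geq 2$ and we may assume $\e<1$, so $d^3/\e^2 \geq 2d/\e + \e$) — then the summand at $k=\lceil\log_2 R\rceil$ dominates and the sum is $\ll 2^{\lceil\log_2 R\rceil(2d/\e - p\g)}$, which is certainly $O(1) = O(R^\e)$; alternatively if one only wants $O(R^\e)$ one needs $p\g - 2d/\e > -\e$, giving even more room. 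Either way the geometric series is controlled.

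I expect the only real subtlety to be bookkeeping the exponent in the volume bound carefully enough to see that the stated threshold $p > d^3/\g\e^2$ suffices — one must be slightly careful that $\abs{\th}$ can be as large as $\norm{\zeta}^{1/\e}$, which is where the $1/\e$ (and hence, after clearing, the $\e^2$ in the denominator and the extra power of $d$) enters. There is no analytic difficulty here: the pointwise decay from Proposition~\ref{prop:Case 1 general curves} does all the work, and this corollary is purely a matter of integrating it against the Lebesgue measure of the truncated cone. One should also note $\widehat{\nu}$ is bounded by $1$ everywhere, so the region near $\norm{\zeta}=0$ excluded from $C_{R,\e}$ (where the pointwise bound degenerates) causes no problem — it simply isn't part of this integral.
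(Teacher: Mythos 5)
Your argument is correct and essentially the same as the paper's: integrate the pointwise decay $|\hat{\nu}(\xi)|\ll\norm{\zeta}^{-\g}$ from Proposition~\ref{prop:Case 1 general curves} over $C_{R,\e}$, with the trivial bound $|\hat{\nu}|\leq 1$ near the origin, and verify that the stated threshold on $p$ makes the result $O(R^\e)$. The paper excises a single ball of radius $R^{\e/d}$ and uses the resulting lower bound on $\norm{\zeta}$ on the complement, whereas you use a dyadic decomposition in $\norm{\zeta}$; both are pure bookkeeping for the same integral, and your version is if anything more transparent about the $\th$-range growing like $\norm{\zeta}^{1/\e}$. One small correction to your last remark: the region $\norm{\zeta}<1$ is \emph{not} excluded from $C_{R,\e}$ --- the definition permits small $\norm{\zeta}$ as long as $|\th|^\e\leq\norm{\zeta}$, which then forces $|\th|<1$ --- so your dyadic sum starting at $k=0$ silently drops this piece. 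Since that piece has bounded measure, the trivial bound gives an $O(1)$ contribution, so this is a one-line patch rather than a gap.
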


\begin{proof}
Let $\d= \e/d$.
Let $D_{R,\d} = C_{R,\d}\setminus \set{\xi: \norm{\xi}\leq R^\d}$.
Then, applying Proposition~\ref{prop:Case 1 general curves} on $D_{R,\e}$,  using its notations and the trivial bound $|\widehat{\nu}(-)|\leq 1$ on its complement, we get
\begin{align*}
    \int_{C_{R,\e}} |\widehat{\nu}(\xi)|^p\,d\xi 
    = \int_{D_{R,\e}} |\widehat{\nu}(\xi)|^p\;d\xi 
    + O(R^\e)
     \ll \int_{\set{\xi=(\th,\zeta)\in D_{R,\d}}} \norm{\zeta}^{-\g p} \,d\xi + R^\e.
\end{align*}
Finally, note that, on $D_{R,\d}$, $\norm{\zeta}\gg \norm{\xi}^{\d} > R^{\d^2}$.
The corollary follows since $D_{R,\d}$ has measure $O(R^d)$.
\qedhere

\end{proof}

\section{Average Decay Near the First Coordinate: Proof of Proposition~\ref{prop: Case 2}} \label{Section:decay near first coordinate}

The purpose of this Section is to prove the following estimate on the average decay of curved self-similar measures for frequencies with dominant first coordinate.
This immediately yields Proposition~\ref{prop: Case 2}, which is the second main ingredient in the proof of Theorem~\ref{main theorem general}.
We recall the statement of that proposition for the reader's convenience.
\begin{prop} \label{prop:Case 2 general curves all dimensions}
Let $\mu \in \mathcal{P}(\mathbb{R})$ be a non-atomic  self-similar measure and $d\geq 2$. 
Let $g:U\r \R^{d-1} $ be a  $C^1$-map defined on an open neighborhood $U$ of $\supp(\mu)$.
   For $Q(x)=(x,g(x))$, 
    let $\nu =Q\mu$ be the pushforward of $\mu$ to the graph of $g$.
    For $R\geq 1$ and $\e>0$, let 
    \begin{align*}
        E_{R,\e} =\set{(\th,\zeta)\in \R\times \R^{d-1}: \norm{\zeta}\leq |\th|^\e \leq  R^\e}.
    \end{align*}
Then, for every $\e >0$, there exists  $p=p(\e, \mu)>1$ such that for all $R\geq 1$, we have
\begin{align*}
    \int_{\xi\in E_{R,\e}}\left| \widehat{\nu} (\xi) \right|^{p} \, d\xi = O_{\e,\mu}(R^\e).
\end{align*}
\end{prop}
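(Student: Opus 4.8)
\textbf{Proof proposal for Proposition~\ref{prop:Case 2 general curves all dimensions}.}
The plan is to reduce the $d$-dimensional integral over $E_{R,\e}$ to a one-dimensional statement about $\widehat{\mu}$ itself, to which Tsujii's estimate (Corollary~\ref{Coro Tsujii}) applies. The guiding observation is that on $E_{R,\e}$ the frequency vector $\xi = (\th,\zeta)$ has a \emph{dominant} first coordinate: $\norm{\zeta}\leq |\th|^\e$. Since $g$ is $C^1$ on a neighborhood of the compact set $\supp(\mu)$, it is Lipschitz there with some constant $L$, so for $x,y\in\supp(\mu)$ we have $|g(x)-g(y)|\leq L|x-y|$, and $\mrm{diam}(\supp(\nu))=O(1)$. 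Writing
\begin{align*}
\langle \xi, (x,g(x))\rangle - \langle \xi, (y,g(y))\rangle = \th(x-y) + \langle \zeta, g(x)-g(y)\rangle,
\end{align*}
the error term is $O(\norm{\zeta}) = O(|\th|^\e)$, which for the relevant range of $\th$ is much smaller than the scale $1/|\th|^{1-\e'}$ at which $\widehat{\mu}$ varies. More precisely, I would first integrate out the $\zeta$-variable: for fixed $\th$, the ball $\set{\zeta : \norm{\zeta}\leq |\th|^\e}$ in $\R^{d-1}$ has volume $O(|\th|^{\e(d-1)})$, and on it $\bigl|\widehat{\nu}(\th,\zeta)\bigr| \leq \bigl|\widehat{\mu}(\th)\bigr| + O(|\th|^\e)$ by the Lipschitz continuity of the Fourier transform combined with the displayed identity (this is the content of the lemma cited in the outline as Lemma~\ref{lem:annular partition of Case 2}: pushing $\nu$ forward by the first-coordinate projection gives $\mu$, up to a controlled perturbation coming from the $\zeta$-contribution). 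Hence
\begin{align*}
\int_{E_{R,\e}} |\widehat{\nu}(\xi)|^p\,d\xi \ll \int_{|\th|\leq R} |\th|^{\e(d-1)}\Bigl(|\widehat{\mu}(\th)|^p + O(|\th|^{p\e})\Bigr)\,d\th.
\end{align*}

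The second term is harmless provided $\e$ is taken small relative to $d$ and $p$ is not too large; it contributes $O(R^{1+\e(d-1)+p\e})$, and the constraint on $E_{R,\e}$ — namely $\norm{\zeta}\leq|\th|^\e$ automatically forces $|\th|\geq \norm{\zeta}^{1/\e}\geq 1$ in the regime of interest — keeps the powers under control; actually since we are free to choose $\e$ much smaller than the target exponent, this needs a small bookkeeping adjustment (replace the target $\e$ in the statement by $\e/(2d)$ or similar at the outset), which is routine. The real content is the first term: I would split the domain $|\th|\leq R$ into dyadic blocks $|\th|\asymp T$ for $T = 2^j$, $j\leq \log_2 R$, and on each block apply Corollary~\ref{Coro Tsujii}. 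That corollary gives, for every $\e_0>0$, a $\d>0$ such that the set of $\th$ with $|\th|\asymp T$ and $|\widehat{\mu}(\th)| > T^{-\d}$ is covered by $O_{\e_0}(T^{\e_0})$ unit intervals; on the complement $|\widehat{\mu}(\th)|\leq T^{-\d}$, so its $p$-th power contributes at most $T\cdot T^{-\d p}$, which is $\ll 1$ once $p > 1/\d$. On the exceptional set, using $|\widehat{\mu}|\leq 1$ and the covering bound, the contribution is $O(T^{\e_0})$. Summing the dyadic blocks against the extra weight $|\th|^{\e(d-1)}\asymp T^{\e(d-1)}$, and summing $T$-geometrically up to $R$, yields a total bound $O(R^{\e_0 + \e(d-1)})$. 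Choosing $\e_0$ and $\e$ small enough (and $p$ large enough, $p = p(\e,\mu)$) makes this $O(R^{\e})$ as required. This is essentially the argument behind Lemma~\ref{Lemma 3 rewrite} in the outline.

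The main obstacle — though it is more of a careful-bookkeeping issue than a conceptual one — is \emph{tracking the order of quantifiers and the smallness of the perturbation}. One has to choose $\e_0$ (the Tsujii parameter), then receive $\d$, then choose $p>1/\d$; but $p$ also enters the harmless term $|\th|^{p\e}$, so $\e$ must be chosen \emph{after} $p$, i.e.\ $\e$ small depending on $p$. Meanwhile the statement asks for: given $\e$, produce $p$. So the honest route is: given the target $\e$, first shrink attention to proving the bound $O(R^{\e/3})$ for the main term with a \emph{pre-committed} auxiliary parameter $\e_1 = \e/(3d)$ controlling the $\zeta$-volume weight, get $\d = \d(\e_1,\mu)$ from Tsujii, set $p = \max(1/\d, \ldots) $, and only then verify the leftover term $R^{1 + \e_1(d-1) + p\,\e_{\mathrm{pert}}}$ is $O(R^{\e/3})$ by choosing the perturbation scale exponent $\e_{\mathrm{pert}}$ (which we are free to take as small as we like, since it just records how finely we Taylor/Lipschitz-expand) small enough depending on $p$. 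Once this dependency chain is laid out linearly there is no circularity, and the estimate closes. The Lipschitz hypothesis on $g$ is used only to get the $O(\norm{\zeta})$ error in the phase and the boundedness of the support; no curvature or non-degeneracy is needed here, consistent with the hypotheses of the proposition.
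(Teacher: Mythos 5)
There is a genuine gap, and it lies in the very first step.

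You claim the pointwise bound $|\widehat{\nu}(\th,\zeta)| \leq |\widehat{\mu}(\th)| + O(|\th|^{\e})$, justified by "Lipschitz continuity of the Fourier transform" applied to the displacement $\norm{\zeta}$. This is technically true but vacuous: on $E_{R,\e}$ we only know $\norm{\zeta}\leq |\th|^{\e}$, which can be as large as $R^{\e}$, so your error term $O(|\th|^{\e})$ swamps the main term $|\widehat{\mu}(\th)|\leq 1$ entirely. The phase correction $\langle\zeta, g(x)\rangle$ in $\widehat{\nu}(\th,\zeta)=\int e(\th x + \zeta\cdot g(x))\,d\mu(x)$ has size of order $\norm{\zeta}$, not $o(1)$, because you are integrating over the full support of $\mu$ which has diameter $\asymp 1$. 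There is no "perturbation scale exponent $\e_{\mathrm{pert}}$" you are free to shrink: the size of the phase error is dictated by $\norm{\zeta}\cdot\mrm{diam}(\supp\mu)$, not by how finely you Taylor-expand $g$. Your final bookkeeping paragraph tacitly assumes a tunable knob that does not exist in the argument as written.

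What the paper actually does, and what is missing from your sketch, is a renormalization by the self-similar cut-set \emph{before} comparing with $\widehat{\mu}$. Decomposing $\widehat{\nu}=\sum_{\w\in P_{\t}}\mbf{p}_\w\widehat{\nu_\w}$, each piece $\nu_\w$ lives on an interval of length $\asymp\t$, so the Lipschitz bound for $g$ applied \emph{on that small piece} gives $|\widehat{\nu_\w}(\th,\zeta)| \leq |\widehat{\mu}(\l_\w\th)| + O(\t\norm{\zeta})$ — crucially with the extra factor of $\t$ and with the \emph{rescaled} frequency $\l_\w\th$, not $\th$. Choosing $\t$ to depend on the dyadic scale of $|\th|$ (the paper uses $\t_n=2^{-n}$ with $|\th|\asymp q^n$, $q=2^{1/2\e}$, so $\t\asymp|\th|^{-2\e}$ and $\t\norm{\zeta}\ll|\th|^{-\e}$ is now genuinely small) makes the error term negligible. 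After raising to the $p$-th power one must apply Jensen's inequality to push the sum over $\w$ outside, and then a change of variables $\l_\w\th\mapsto\th$ in the integration over $\th$ (which costs a Jacobian factor $\asymp\t_n^{-1}$) before Tsujii can be applied. None of these steps — the cut-set sum, the rescaled argument $\l_\w\th$, Jensen, the change of variables — appear in your proposal; without them the error cannot be controlled and the argument does not close. The broad outline (reduce to a one-variable statement about $\widehat{\mu}$, integrate out $\zeta$, dyadic decomposition, invoke Tsujii) is the right shape, but the reduction itself is the nontrivial content and it is not the direct pointwise comparison you propose.
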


The rest of this section is dedicated to the proof of Proposition~\ref{prop:Case 2 general curves all dimensions}.
By replacing $\mu$ with an affine image of itself, we shall assume its support is contained in $[0,1]$.
    Let $\Phi=\set{f_i(x)=\l_ix+t_i }_{i\in I}$.
As before, for a word $\w\in I^\ast$, we write $\nu_\w$ for the pushforward of $f_\w \mu$ under $x\mapsto (x,g(x))$.
Recall the definition of the cut-set $P_\tau$ from Definition \ref{def:cut sets}, and the notations set in Section \ref{Section self-similar prel.}.

Fix $\e>0$ and let $\d=\e/d$.
We also fix $R>1$, which we shall assume to be sufficiently large depending on $\e$ and $\mu$, and let
\begin{align*}
    F_{R,\e} = E_{R,\e} \setminus \set{(\th,\zeta)\in E_{R,\e}: |\th|\leq R^\d}.
\end{align*}
First, note that since $E_{R,\e}\setminus F_{R,\e}$ has measure $O(R^{\e})$ and $|\hat{\nu}(-)|\leq 1$, we get
    \begin{align}\label{eq:Case 2 excise origin}
        \int_{E_{R,\e}} \left|\widehat{\nu}(\xi)\right|^p \,d\xi 
        \leq \int_{F_{R,\e}} \left|\widehat{\nu}(\xi)\right|^p \,d\xi 
        + O(R^\e).
    \end{align}
Hence, we focus on the region $F_{R,\e}$.
By Lemma \ref{lem:cut set}, for every $\tau>0$ and every $\xi \in \mathbb{R}^d$ we have
$$\hat{\nu}( \xi)=\sum_{\omega \in P_{\tau}}\mbf{p}_\w\widehat{\nu_\omega}( \xi ).$$
We have the following initial pointwise bound on terms of the above sum using the Fourier transform of the original self-similar measure.
\begin{lem}\label{lem:bound by first coordinate}
    For every $\t\in (0,1)$, $\w\in P_\t$, and $\xi=(\th,\zeta)\in \R\times \R^{d-1}$, we have
    \begin{align*}
        \left| \widehat{\nu_\omega}(\xi) \right| 
    \leq \left|\hat{\mu}(\l_\omega \th)\right| 
    +O(  \t \norm{\zeta})     .
    \end{align*}
\end{lem}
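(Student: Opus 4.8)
The plan is a direct Taylor/Lipschitz estimate that separates the linear part of the phase from the curved part. First I would unfold the definition of $\widehat{\nu_\w}$: writing $f_\w(y) = \l_\w y + t_\w$ and changing variables,
$$
\widehat{\nu_\w}(\xi) = \int e\big( \th f_\w(y) + \langle \zeta, g(f_\w(y))\rangle \big)\, d\mu(y)
= e\big( \th t_\w + \langle \zeta, g(t_\w)\rangle \big) \int e(\th \l_\w y)\, e\big( \langle \zeta,\, g(f_\w(y)) - g(t_\w)\rangle \big)\, d\mu(y),
$$
where I have factored out the unimodular constant $e( \th t_\w + \langle \zeta, g(t_\w)\rangle )$ using the identity $\th f_\w(y) + \langle \zeta, g(f_\w(y))\rangle = \th t_\w + \langle \zeta, g(t_\w)\rangle + \th \l_\w y + \langle \zeta, g(f_\w(y)) - g(t_\w)\rangle$.

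Next, since $\w \in P_\t$ we have $|\l_\w| < \t$, so $\diam{f_\w([0,1])} = |\l_\w| < \t$; in particular, for every $y \in \supp(\mu) \subseteq [0,1]$ the point $f_\w(y)$ lies within distance $\t$ of $t_\w = f_\w(0)$ and stays inside the interval preserved by the IFS, hence inside the domain $U$ of $g$ (once $\t$ is small). By the $C^1$-regularity of $g$ on a fixed compact neighborhood of $[0,1]$, the mean value inequality gives $\norm{g(f_\w(y)) - g(t_\w)} \ll_{g} |\l_\w| \le \t$ in the max norm, and therefore $|\langle \zeta, g(f_\w(y)) - g(t_\w)\rangle| \ll_{g,d} \t \norm{\zeta}$ uniformly in $y \in \supp(\mu)$. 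Combining this with the elementary bound $|e(u) - 1| \le 2\pi|u|$ yields $\big| e( \langle \zeta, g(f_\w(y)) - g(t_\w)\rangle ) - 1 \big| = O(\t\norm{\zeta})$ uniformly in $y$.

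Finally I would substitute this into the integral from the first step, writing $e(\langle \zeta, g(f_\w(y)) - g(t_\w)\rangle) = 1 + \big(e(\langle \zeta, g(f_\w(y)) - g(t_\w)\rangle) - 1\big)$. The contribution of the $1$ is $e(\th t_\w + \langle \zeta, g(t_\w)\rangle)\int e(\th\l_\w y)\,d\mu(y)$, whose modulus equals $|\hat\mu(\l_\w\th)|$ (since $\mu$ is a real measure, $\int e(\th\l_\w y)\,d\mu$ agrees with $\hat\mu(\l_\w\th)$ up to complex conjugation); the contribution of the remaining term is bounded by the total mass of $\mu$, namely $1$, times $O(\t\norm{\zeta})$. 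Taking absolute values and applying the triangle inequality gives $|\widehat{\nu_\w}(\xi)| \le |\hat\mu(\l_\w\th)| + O(\t\norm{\zeta})$, as claimed. There is no genuine obstacle here; the only points requiring a little care are that the implied constant depends only on $g$ (through $\norm{g'}$ on a neighborhood of $\supp(\mu)$) and $d$, and that $\t$ is small enough for $f_\w([0,1])$ to remain in the domain of $g$ — both of which are immediate from the setup.
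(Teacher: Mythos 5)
Your proof is correct and follows essentially the same route as the paper's: expand $g(f_\w(y))$ about $t_\w = f_\w(0)$ using $C^1$-regularity to get a uniform $O(\t)$ perturbation, factor the phase so that the remaining integral is $|\hat\mu(\l_\w\th)|$ (up to a unimodular constant and the conjugation convention), and bound the perturbation's contribution by $O(\t\norm{\zeta})$ via the Lipschitz bound on $e(\cdot)$. The paper phrases the last step as ``Lipschitz continuity of the Fourier transform,'' but the operational content is identical to your use of $|e(u)-1|\le 2\pi|u|$.
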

\begin{proof}
Note that, for every $\omega\in P_\tau$ and $x\in \supp(\mu)$,  $\l_\omega \leq \t$, since $g$ is $C^1$, we have $g(f_\w(x))=g(f_\w(0)) + O(\t)$.
Thus, since $f_\omega(x)=\l_\omega \cdot x + t_\omega$, and $\hat{\mu}(-)$ is Lipschitz continuous, we get for every frequency $\xi =(\th,\zeta)\in \R\times \R^{d-1}$ that
\begin{align*}
    \left| \widehat{\nu_\omega}(\xi) \right| 
    = \left |\int e(  (\th f_\omega (x)+ \zeta\cdot g(t_\w) )\,d\mu(x) \right| + O(\norm{\zeta}\t)
    = \left|\hat{\mu}(\l_\omega \th)\right| 
    +O(  \t \norm{\zeta})     ,
\end{align*}
which is the claimed bound.
\end{proof}

The rest of the proof of Proposition~\ref {prop:Case 2 general curves all dimensions} is broken down into the following two Lemmas. 
In what follows, for $n\in \N$, we let for
\begin{align}\label{eq:Case 2 parameters}
    q \stackrel{\mrm{def}}{=} 2^{1/2\e}, \qquad
    \t_n \stackrel{\mrm{def}}{=} 2^{-n},
    \qquad
    F_{R,\e}^n \stackrel{\mrm{def}}{=}
    \set{(\th,\zeta)\in F_{R,\e}: q^n\leq |\th|< q^{n+1}}.
\end{align}

\begin{lem} \label{lem:annular partition of Case 2}
For all integers $p\geq 2d^2/\e$, we have
\begin{align*}
    \int_{F_{R,\e}} \left|\widehat{\nu}(\xi)\right|^p \,d\xi 
    \leq
\sum_{n\in \N} 
\int_{F_{R,\e}^n}
\left( \sum_{\w \in P_{\t_n}} \mbf{p}_\w \left|\hat{\mu}(\l_\w \th(\xi)) \right|^p \right) \, d\xi + O_\e(1),
\end{align*}
where $\th(\xi)$ denotes the first coordinate of $\xi$.
\end{lem}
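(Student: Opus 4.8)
The plan is to cut $F_{R,\e}$ into the dyadic shells $F_{R,\e}^n$ in the first coordinate and, on each shell, to exploit self-similarity of $\mu$ at the matching scale $\t_n=2^{-n}$ together with Lemma~\ref{lem:bound by first coordinate} to replace $|\widehat\nu(\xi)|$ by $|\widehat\mu(\l_\w\th(\xi))|$. First, one may assume $R$ is large: since $|\widehat\nu|\le1$ and $\mrm{Leb}(F_{R,\e})\ll R^{1+\e(d-1)}$, the left-hand side is $O_\e(1)$ for $R$ in any bounded range; one may also assume $\e<1$. Note moreover that $F_{R,\e}^n\neq\emptyset$ only when $q^{n+1}>R^{\d}$ and $q^n\le R$, so $n$ ranges over an interval of length $O_\e(\log R)$; on every nonempty shell $R^{\d}<|\th(\xi)|\asymp q^n$, and since $q=2^{1/2\e}$ we have $\t_n=2^{-n}\asymp|\th(\xi)|^{-2\e}$.

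Since the $F_{R,\e}^n$ partition $F_{R,\e}$, it suffices to bound $\int_{F_{R,\e}^n}|\widehat\nu|^p\,d\xi$ for each $n$ and sum. Fix $n$. By the cut-set decomposition of $\nu$ (Lemma~\ref{lem:cut set}) at scale $\t=\t_n$ we have $\widehat\nu(\xi)=\sum_{\w\in P_{\t_n}}\mbf{p}_\w\,\widehat{\nu_\w}(\xi)$, so Jensen's inequality applied to the probability vector $(\mbf{p}_\w)_{\w\in P_{\t_n}}$ and the convex function $t\mapsto t^p$ gives $|\widehat\nu(\xi)|^p\le\sum_{\w\in P_{\t_n}}\mbf{p}_\w\,|\widehat{\nu_\w}(\xi)|^p$. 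On $F_{R,\e}^n$ one has $\norm{\zeta}\le|\th(\xi)|^\e<q^{(n+1)\e}=2^{(n+1)/2}$, hence $\t_n\norm{\zeta}<2^{(1-n)/2}$, and Lemma~\ref{lem:bound by first coordinate} gives $|\widehat{\nu_\w}(\xi)|\le|\widehat\mu(\l_\w\th(\xi))|+O(2^{-n/2})$ for every $\w\in P_{\t_n}$. Raising to the $p$-th power (using $|\widehat\mu|\le 1$) and expanding via $(a+b)^p\le 2^{p-1}(a^p+b^p)$ yields, pointwise on $F_{R,\e}^n$,
\begin{equation*}
|\widehat\nu(\xi)|^p\;\le\;2^{p-1}\sum_{\w\in P_{\t_n}}\mbf{p}_\w\,|\widehat\mu(\l_\w\th(\xi))|^p\;+\;O_p\!\big((\t_n\norm{\zeta})^p\big).
\end{equation*}
The constant $2^{p-1}$ on the main term is immaterial for the sequel, as $p$ depends only on $\e$; if desired it can be trimmed to an $\e$-dependent constant by a dichotomy according to whether $\t_n\norm{\zeta}$ dominates $p^{-1}\sum_\w\mbf{p}_\w|\widehat\mu(\l_\w\th(\xi))|$ (on the over-large part one bounds $|\widehat\nu|^p$ by $O_p((\t_n\norm{\zeta})^p)$ directly; on the complement $(a+b)^p\le a^p(1+1/p)^p$).

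It remains to show the total contribution of the error terms is $O_\e(1)$. Using $\t_n\asymp|\th|^{-2\e}$ and $\norm{\zeta}\le|\th|^\e$ on $F_{R,\e}^n$, and that the shells tile $\{\,R^{\d}<|\th|\le R,\ \norm{\zeta}\le|\th|^\e\,\}$, we obtain
\begin{equation*}
\sum_{n\in\N}\int_{F_{R,\e}^n}(\t_n\norm{\zeta})^p\,d\xi\;\ll\;\int_{R^{\d}<|\th|\le R}\int_{\norm{\zeta}\le|\th|^\e}|\th|^{-2\e p}\,\norm{\zeta}^p\,d\zeta\,d\th\;\asymp\;\int_{R^{\d}<|\th|\le R}|\th|^{-\e p+\e(d-1)}\,d\th.
\end{equation*}
Since $p\ge 2d^2/\e$ (already $p>d-1+1/\e$ suffices), the exponent $-\e p+\e(d-1)$ is $<-1$, so the last integral is $\ll(R^{\d})^{-\e p+\e(d-1)+1}\ll1$; multiplying by the $O_p(1)$ constant, which depends only on $\e$, yields the desired $O_\e(1)$ and completes the proof.

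The only real obstacle is this last estimate. The shell $F_{R,\e}^n$ has Lebesgue measure $\asymp q^{n(1+\e(d-1))}=2^{n(1/(2\e)+(d-1)/2)}$, which grows far faster than the crude Lipschitz error $2^{-n/2}$ decays, so a first-moment bound is hopeless; one must pass to the $p$-th power, where the gain $\t_n^p$ overcomes the shell volume once $p$ is of order $1/\e$, turning the errors into a convergent geometric series in $n$. The choice $\t_n=2^{-n}$ is exactly the one making $\t_n\norm{\zeta}\ll2^{-n/2}$ on the $n$-th shell while keeping $\l_\w\th(\xi)\asymp 2^{n(1-2\e)/(2\e)}$ large — which the remaining lemma needs in order to bound $\sum_\w\mbf{p}_\w|\widehat\mu(\l_\w\th(\xi))|^p$ through Tsujii's estimate.
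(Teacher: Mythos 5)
Your proof follows the same route as the paper's: decompose $F_{R,\e}$ into the shells $F_{R,\e}^n$, apply the cut-set decomposition of Lemma~\ref{lem:cut set} at scale $\t_n$, push $|\widehat\nu|^p$ inside via Jensen, linearize with Lemma~\ref{lem:bound by first coordinate}, and dispose of the error term by integration. Two comments. First, both you and the paper incur a multiplicative constant ($2^{p-1}$ or similar) on the main term when splitting $(|\hat\mu|+O(\t_n\norm{\zeta}))^p$, so the lemma's literal inequality (with bare ``$\leq$'' on the main term) only holds up to an $\e$-dependent factor; you flag this explicitly and note it is harmless, and your dichotomy trick is a clean way to make it exactly $\e$-dependent if one cares. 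Second, and more substantively, your error-term bookkeeping is tighter than the paper's in a way that matters. The paper bounds the $n$-th shell's contribution by $2^{-np/2}R^d$, using the crude volume estimate $|F_{R,\e}^n|\ll R^d$, and then claims the geometric sum is $\ll R^{d-\d p/2}$; but since $q=2^{1/(2\e)}$, the lowest shell has $2^{-n_0}\asymp R^{-2\e\d}$ rather than $R^{-\d}$, so the sum is in fact $\asymp R^{d-\e\d p}=R^{d-\e^2p/d}$, which requires $p\gtrsim d^2/\e^2$ to be $O_\e(1)$ rather than the stated $p\geq 2d^2/\e$. You instead use the true shell volume $\asymp q^{n(1+\e(d-1))}$, i.e.\ integrate $\norm{\zeta}^p$ exactly over $\norm{\zeta}\leq|\th|^\e$ and trade $\t_n$ for $|\th|^{-2\e}$, landing on $\int_{R^\d}^R|\th|^{-\e p+\e(d-1)}\,d\th\ll (R^\d)^{-\e p+\e(d-1)+1}$, which is $O_\e(1)$ already for $p>d-1+1/\e$, comfortably below $2d^2/\e$. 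So your argument both corrects a small arithmetic slip and shows the lemma holds under a weaker lower bound on $p$; neither issue affects the downstream results in the paper since $p$ is permitted to depend on $\e$ anyway, but your version is the one that actually justifies the threshold as stated.
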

\begin{proof}
First, since $F_{R,\e} = \cup_{n\in\N} F_{R,\e}^n$, we get
\begin{align*}
    \int_{F_{R,\e}} \left|\widehat{\nu}(\xi)\right|^p \,d\xi 
    &\leq \sum_{n\in\N} \int_{F_{R,\e}^n} \left|\widehat{\nu}(\xi)\right|^p \,d\xi 
    = \sum_{n\in\N} \int_{F_{R,\e}^n} 
    \left| \sum_{\w \in P_{\t_n} }\mbf{p}_\w\widehat{\nu_\omega}( \xi) \right|^{p} \,d\xi.
\end{align*}
Note that the outer sum runs over $n$ with $R^\d \leq q^{n+1} \leq q R $.
Applying Lemma~\ref{lem:bound by first coordinate} with $\t=\t_n$ for each $n$, we obtain
\begin{align*}
    \int_{F_{R,\e}} \left|\widehat{\nu}(\xi)\right|^p \,d\xi 
    &\leq 
    \sum_{n\in \N} \int_{\xi=(\th,\zeta) \in F_{R,\e}^n}  \left| \sum_{\omega \in  P_{\t_n} } \mbf{p}_\w \left[ \left| \hat{\mu}(\l_\omega\cdot  \th) \right|+ O(2^{-n} \norm{ \zeta}) \right]\right|^{p} \, d\xi.
\end{align*}
By Jensen's inequality, since $p>1$ and $\sum_{\omega \in P_{\t_n}} \mbf{p}_\w=1$, we obtain
\begin{align*} 
\left| \sum_{\omega \in  P_{\t_n} } \mbf{p}_\w \left[ \left| \hat{\mu}(\l_\omega\cdot  \th) \right|+ O(2^{-n} \norm{ \zeta}) \right]\right|^{p}
&\ll  \norm{\zeta}^p \cdot 2^{-np}   + \sum_{\omega \in P_{\t_n} }\mbf{p}_\w \left| \hat{\mu}(\l_\omega \cdot \th) \right|^{p}
.
\end{align*}

Integrating the second term over $F_{R,\e}^n$ and summing over $n\in \N$ gives the first term of the claimed bound.
For the first term, note that for each $\xi=(\th,\zeta)\in F_{R,\e}^n$, we have that $\norm{\zeta}\leq |\th|^{\e } \leq q^{\e(n+1)}$.
The choice of $q$ in~\eqref{eq:Case 2 parameters} also gives $q^{\e (n+1)} 2^{-n}\leq q^\e 2^{-n/2}$.

Thus, using that $F_{R,\e}$ has measure $O(R^d)$ and that the sum over $n$ only involves terms satisfying $R^\d\leq q^{n+1}\leq R q$, we get
\begin{align*}
    \sum_{n\in \N} \int_{\xi=(\th,\zeta) \in F_{R,\e}^n}
    \norm{\zeta}^p \cdot 2^{-np}  \,d\xi
    \ll \sum_{n\in \N:  q^{n+1}\geq R^{\d}}
    2^{-np/2} R^d
    \leq R^{d -\d p/2} q
    .
\end{align*}
Taking $p\geq 2d/\d=2d^2/\e$, the above bound is $O_\e(1)$, thus completing the proof.
\end{proof}

The next lemma estimates the main term in the bound provided by Lemma~\ref{lem:annular partition of Case 2}.
The key ingredient is Corollary~\ref{Coro Tsujii} on $L^2$-flattening of self-similar measures.
\begin{lem} \label{Lemma 3 rewrite}
For all $p\gg_{\e,\mu} 1$, we have
\begin{align*}
\sum_{n\in \N} 
\int_{F_{R,\e}^n}
\left( \sum_{\w \in P_{\t_n}} \mbf{p}_\w \left|\hat{\mu}(\l_\w \th(\xi)) \right|^p \right) \, d\xi 
= O_{\e,\mu}(R^{\e(d+3)}),
\end{align*}
where $\th(\xi)$ denotes the first coordinate of $\xi$.
\end{lem}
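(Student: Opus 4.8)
The plan is to interchange the order of integration and summation, and reduce the integral over each annular region $F_{R,\e}^n$ to a one-dimensional integral in the variable $\th$, after which Corollary~\ref{Coro Tsujii} applies directly. First I would note that for fixed $n$, the region $F_{R,\e}^n$ is contained in $\set{q^n \leq |\th| < q^{n+1}} \times \set{\norm{\zeta} \leq q^{\e(n+1)}}$, so the $\zeta$-integral contributes a factor of $O(q^{\e(d-1)(n+1)})$, leaving
\begin{align*}
\int_{F_{R,\e}^n} \Big( \sum_{\w\in P_{\t_n}} \mbf{p}_\w |\hat\mu(\l_\w \th(\xi))|^p \Big)\, d\xi
\ll q^{\e(d-1)(n+1)} \sum_{\w\in P_{\t_n}} \mbf{p}_\w \int_{q^n \leq |\th| < q^{n+1}} |\hat\mu(\l_\w \th)|^p \, d\th.
\end{align*}

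Next I would change variables $u = \l_\w \th$ in each inner integral; since $\l_\w \asymp \t_n = 2^{-n}$, the substitution costs a Jacobian factor $\l_\w^{-1} \asymp 2^n$, and transforms the domain into $\set{u : |u| \asymp \l_\w q^n}$, which is contained in a fixed ball $B(q^{n+1})$ (indeed $\l_\w q^n \leq q^{n+1}$ comfortably, since $\l_\w < 1 < q$). So the inner integral is bounded by $2^n \int_{B(q^{n+1})} |\hat\mu(u)|^p\, du$, which is independent of $\w$. Summing against $\sum_\w \mbf{p}_\w = 1$ and applying Corollary~\ref{Coro Tsujii}: given $\e>0$ there is $p = p(\e,\mu)$ with $\norm{\hat\mu}_{L^p(B(S))}^p = O_\e(S^\e)$ for all $S$, so with $S = q^{n+1}$ we get $\int_{B(q^{n+1})} |\hat\mu|^p \ll_\e q^{\e(n+1)}$. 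This yields
\begin{align*}
\int_{F_{R,\e}^n} \Big( \sum_{\w\in P_{\t_n}} \mbf{p}_\w |\hat\mu(\l_\w \th(\xi))|^p \Big)\, d\xi
\ll_{\e,\mu} q^{\e(d-1)(n+1)} \cdot 2^n \cdot q^{\e(n+1)}
\ll 2^n q^{\e d (n+1)}.
\end{align*}

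Finally I would sum over $n$. The sum in Lemma~\ref{lem:annular partition of Case 2} only ranges over $n$ with $q^{n+1} \leq qR$, i.e.\ $q^n \leq R$, equivalently $2^n = q^{2\e n} \leq R^{2\e}$ (recalling $q = 2^{1/2\e}$). Hence $2^n \leq R^{2\e}$ and $q^{\e d(n+1)} \leq (qR)^{\e d} \ll R^{\e d}$, and the number of terms is $O(\log R)$; absorbing the logarithm into a slightly larger power of $R$, the total is $O_{\e,\mu}(R^{2\e + \e d} \log R) = O_{\e,\mu}(R^{\e(d+3)})$, as claimed. The only subtle point is bookkeeping the exponent of $R$ through the change of variables — the factor $2^n$ from the Jacobian is exactly what forces the final exponent to grow with $d$ and $\e$, but since the target bound $R^{\e(d+3)}$ has room to spare, no delicate optimization is needed; one should just make sure that the choice of $p$ depends only on $\e$ and $\mu$ (as supplied by Corollary~\ref{Coro Tsujii}) and is taken large enough to also kill the error term $O(R^{d-\d p/2})$ from Lemma~\ref{lem:annular partition of Case 2}, which is already handled there.
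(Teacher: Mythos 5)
Your proof is correct and follows essentially the same approach as the paper: integrate out the $\zeta$-variable over the annulus to get the factor $q^{\e(n+1)(d-1)}$, change variables $\l_\w\th\mapsto\th$ with Jacobian $\asymp 2^n$, apply Corollary~\ref{Coro Tsujii} to the resulting one-dimensional integral, and sum over the $O(\log R)$ relevant values of $n$ using $2=q^{2\e}$. The only cosmetic difference is that the paper keeps the domain after the substitution as $\{|\th|\asymp(q/2)^n\}$ and bounds the sum as a geometric series directly, while you use the coarser enclosure $B(q^{n+1})$ and multiply the maximal term by the $O(\log R)$ count before absorbing the logarithm into $R^\e$; both routes comfortably land within the stated $O_{\e,\mu}(R^{\e(d+3)})$.
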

\begin{proof}
Recalling the definition of $F_{R,\e}^n$ in~\eqref{eq:Case 2 parameters}, since the integrand depends only on the first coordinate of $\xi$, we get
\begin{align}\label{eq:integrate zeta away in Case 2}
    \sum_{n\in \N} 
    \int_{F_{R,\e}^n}
    \left( \sum_{\w \in P_{\t_n}} \mbf{p}_\w \left|\hat{\mu}(\l_\w \th(\xi)) \right|^p \right) \, d\xi 
    \ll \sum_{n\in \N: R^\d \leq q^{n+1} \leq Rq} q^{\e (n+1)(d-1)} 
    \sum_{\w \in P_{\t_n}} \mbf{p}_\w 
    \int_{q^n}^{q^{n+1}} 
     \left|\hat{\mu}(\l_\w \th) \right|^p  \, d\th ,
\end{align}
where, as in the proof of the previous lemma, we also used the fact that the outer sum  on the left-hand side runs over $n\in\N$ satisfying $R^\d \leq q^{n+1} \leq Rq$.

Using the change of variable $\l_\w \th\mapsto \th$, and recalling that $\l_\w\asymp \t_n=2^{-n}$ for all $\w\in P_{\t_n}$, we get
\begin{align*}
    \eqref{eq:integrate zeta away in Case 2}
    &\ll \sum_{n\in \N: R^\d \leq q^{n+1} \leq Rq} q^{\e (n+1)(d-1)} 
    \sum_{\w \in P_{\t_n}} \mbf{p}_\w 
    \int_{|\th| \asymp (q/2)^n}
     \left|\hat{\mu}( \th) \right|^p 2^n \, d\th
    \nonumber\\
    &\ll \sum_{n\in \N: R^\d \leq q^{n+1} \leq Rq} q^{\e (n+1)(d-1)} \times  2^n \times
    \int_{|\th| \asymp (q/2)^n}
     \left|\hat{\mu}( \th) \right|^p  \, d\th .
\end{align*}
Now, by Corollary~\ref{Coro Tsujii}, if $p$ is large enough, depending only on $\mu$ and $\e$, for each $n$, the inner integral is $O_{\mu,\e}((q/2)^{\e n})$ .
Thus, recalling that $2=q^{2\e}$, we arrive at the bound
\begin{align*}
    \eqref{eq:integrate zeta away in Case 2}
    &\ll_{\e,\mu} 
    \sum_{n\in \N:  q^{n} \leq R}
    q^{\e nd} \times  2^n \times q^{\e n}
    \ll R^{\e(d+3)} ,
\end{align*}
which completes the proof of the lemma.
\end{proof}

Since $\e$ was arbitrary,
Proposition~\ref {prop:Case 2 general curves all dimensions} now follows from the combination of~\eqref{eq:Case 2 excise origin} with Lemmas \ref{lem:annular partition of Case 2} and \ref{Lemma 3 rewrite}.

\bibliographystyle{amsalpha}
\bibliography{bib}

\end{document}